\def\hurw{\mathop{\mbox{\textsl{H}}}\nolimits}
\def\ker{\mathop{\mbox{\textsl{Ker}}}\nolimits}
\def\ann{\mathop{\mbox{\textsl{Ann}}}\nolimits}
\def\exp{\mathop{\mbox{\textsf{exp}}}\nolimits}
\newcommand{\Opa}{\mathfrak{A}}
\newcommand{\C}{\mathbb{C}}
\newcommand{\Z}{\mathbb{Z}}
\newcommand{\N}{\mathbb{N}}
\newcommand{\R}{\mathbb{R}}
\newcommand{\F}{\mathbb{F}}
\DeclareMathOperator*{\bcast}{\scalerel*{\odot}{\sum}}
\DeclareMathOperator*{\bbox}{\scalerel*{\boxplus}{\sum}}
\newtheorem{theorem}{Theorem}
\newtheorem{lemma}{Lemma}
\newtheorem{definition}{Definition}
\newtheorem{proposition}{Proposition}
\newtheorem{corollary}{Corollary}
\newtheorem{example}{Example}
\begin{document}
\title{\textbf{Ring of flows of one-dimensional differential equations}}
\author{Ronald Orozco L\'opez}

\newcommand{\Addresses}{{

\textit{E-mail address}, R.~Orozco: \texttt{rj.orozco@uniandes.edu.co}
  
}}

\maketitle

\begin{abstract}
In this article the ring of flows of autonomous differential equations of order one on integral domains is constructed. First we build the autonomous ring $\Opa(\hurw_{R}[[x]])$ and then its structure is studied. Next, we build the ring of formal exponential generating series of the ring
$\Opa(\hurw_ {R} [[x]]) $, where it is possible to find solutions of differential equations of order one when the vector field of the system can be decomposed in sum or product of functions.
\end{abstract}
{\bf Keywords:} Hurwitz ring, autonomous operator, autonomous ring, flows ring\\
{\bf Mathematics Subject Classification:} 47H99, 16W99, 34A34, 11B83

\section{Introducci\'on}
A very important problem in the theory of autonomous differential equations is to find exact solutions of these. In the one-dimensional case the solutions of $\phi^{\prime}=f(\phi)$
can be found by using the method of separation of variables. However, it is not always possible to find explicit solutions to these equations by this method. In this article we propose a different method that allows not only to find solutions to these equations but also to give a ring structure to the set of one-dimensional dynamic systems. This type of solutions are found if the vector field $f(x)$ is taken from the Hurwitz ring of power series $\hurw_{R}[[x]]$, where $R$ is a integral domain.

In article [5] we make a first approach to this type of solutions. In particular we find an analytical solution to the autonomous equation $\phi^{(k)}=f(\phi)$ of order $k$ in terms of autonomous polynomials. In this paper the equations will be limited to order one.

Autonomous polynomials are constructed using Bell polynomials. Bell polynomials are a very useful tool in mathematics to represent the $n$-th derivative of the composition of functions (see [3]). Indeed, let $f$ and $g$ be two analytic functions with representation
in power series $\sum a_{n}\frac{x^{n}}{n!}$ and $\sum b_{n}\frac{x^{n}}{n!}$, respectively, with $a_{n},b_{n}\in\C$. Then
\begin{equation}
f(g(x))= f(b_{0}) + \sum_{n=1}^{\infty}Y_{n}(b_{1},...,b_{n};a_{1},...,a_{n})\frac{x^{n}}{n!},
\end{equation}
where $Y_{n}$ is the $n$-th Bell polynomial. For example, it is well known that
\begin{eqnarray*}
Y_{1}(b_{1};a_{1})&=&a_{1}b_{1},\\
Y_{2}(b_{1},b_{2};a_{1},a_{2})&=&a_{1}b_{2}+a_{2}b_{1}^{2},\\
Y_{3}(b_{1},b_{2},b_{3};a_{1},a_{2},a_{3})&=&a_{1}b_{3}+a_{2}(3b_{1}b_{2})+a_{3}b_{1}^{3}.
\end{eqnarray*}

Bell's polynomials can be expressed explicitly by using the formula of Faá di Bruno ([3])
\begin{equation}\label{faa}
Y_{n}(b_{1},...,b_{n};a_{1},...,a_{n})=\sum_{k=1}^{n}B_{n,k}a_{k},
\end{equation}
where
\begin{equation}\label{partialBell}
B_{n,k}=\sum_{\vert p(n)\vert=k}\frac{n!}{j_{1}!j_{2}!\cdots j_{n}!}\left[\frac{b_{1}}{1!}\right]^{j_{1}}\left[\frac{b_{2}}{2!}\right]^{j_{2}}\cdots \left[\frac{b_{n}}{n!}\right]^{j_{n}},
\end{equation}
and the sum runs over all partitions $p(n)$ of $n$, that is, $n=j_{1}+2j_{2}+\cdots+nj_{n}$, $j_{h}$ denotes the number of parts of size $h$ y $\vert p(n)\vert=j_{1}+j_{2}+\cdots+j_{n}$ is the length 
of the partition $p(n)$.

Suppose $ f $ with power series representation and let
\begin{equation}\label{autonoma}
\begin{cases}
u^{\prime} = f(u)\\
u(0)=x
\end{cases}
\end{equation}
be the autonomous differential equation with initial value problem.

In [5] autonomous polynomials were introduced, which are the coefficients of the analytical solution of the equation (\ref{autonoma}) expressed as a function of the initial value in the following way
\begin{small}
\begin{equation}\label{polinomio_autonomo}
\begin{cases}
A_{1}(f(x))=f(x),\\
A_{n+1}(f(x),f^{\prime}(x),...,f^{(n)}(x))=Y_{n}(A_{1}(f(x)),...,A_{n}(f(x));f^{\prime}(x),...,f^{(n)}(x))
\end{cases}
\end{equation}
\end{small}

From (\ref{polinomio_autonomo}) we can see that each autonomous polynomial $A_{n}$ is defined on the sequence $(f^{(n)}(x))_{n\geq0}$ of derivatives of $f(x)\in C_{\infty}(\R)$, where $C_{\infty}(\R)$ 
is the set of all smoothness functions in $\R$. In this article, autonomous differential equations of order one with functions in $\hurw_{R}[[x]]$, the Hurwitz ring of exponential generating functions, are studied. The article is divided as follows. In the second section the ring $(\hurw_{R}[[x]],+,\cdot) $ and its structure are studied. In the third section we define the autonomous operator and construct the autonomous ring $(\Opa(\hurw_{R}[[x]]),\boxplus,\odot)$ that will serve as the basis for defining operations with flows of dynamical systems. Dynamic systems on rings are defined in section four. In this section it is proved that the flow of a dynamic system is a $R$-module acting on the phase space of the system. Finally, in section five the ring of generating functions of sequences in the autonomous ring is constructed. With this ring it is possible to define the one-dimensionals flows ring with which it will be possible to define flow operations. That is, we can decompose the flow of a dynamic system into simpler flows.

Throughout this paper $R$ will be a ring of characteristic zero. Also, $\N$ will be the monoid of non-negative integers and $\N_{1}$ will be the semigroup of positive integers.

\section{The Hurwitz ring $(\hurw_{R}[[x]],+,\cdot)$}

Throughout this paper let $(R,+,\cdot)$ denote an integral domain. Let $\hurw_{R}=\{(a_{n})_{n\in\N}: a_{n}\in R\}$ denote the set of sequences with elements in the ring $R$. Take $\textbf{a}=(a_{n})_{n\in\N}$ and $\textbf{b}=(b_{n})_{n\in\N}$ in $\hurw_{R}$. Define the sum in $\hurw_{R}$ as 
$\textbf{a}+\textbf{b}=(a_{n}+b_{n})_{n\in\N}$. Let $\cdot$ denote the Hadamard product of $\textbf{a}$ and $\textbf{b}$ given by $\textbf{a}\cdot\textbf{b}=(a_{n}\cdot b_{n})_{n\in\N}$, that is, the product in $\hurw_{R}$ is a componentwise product. Then $(H_{R},+,\cdot)$ is a ring with unit element $\textbf{1}=(1,1,1,...)$. An element $\textbf{a}$ is invertible in $\hurw_{R}$ with respect to the product $\cdot$ if and only if $a_{n}\in R^{*}$ for all $n\geq0$.

Now let $*$ denote the Hurwitz product in $\hurw_{R}$ as
\begin{equation}
\textbf{a}\ast\textbf{b}=\left(\sum_{h=0}^{n}\binom{h}{n}a_{h}b_{n-h}\right)_{n\in\N}.
\end{equation}

Then $(\hurw_{R},+,\ast)$ is a ring with unit element $\textbf{e}=(1,0,0,...)$. An element 
$\textbf{a}=(a_ {n})_{n\in\N}$ in $\hurw_{R}$ is invertible with respect to $\ast$ if and only if
$a_{0}\in R^{*}$. Let 
$$\hurw_{R}^{*}=\{\textbf{a}^{-1}:\textbf{a}\in\hurw_{R}\}$$ 
denote the set of invertible elements in $(\hurw_{R},+,*)$, where $\textbf{a}^{-1}=\textbf{b}=(b_{n})_{n\in\N}$ with

\begin{equation}
\begin{cases}
b_{0}=a_{0}^{-1}\\
b_{n}=-a_{0}^{-1}\sum_{h=1}^{n}\binom{n}{h}a_{h}b_{n-h},\ n\geq1.
\end{cases}
\end{equation}

Now denote $\hurw_{R}[[x]]$ the set of formal power series of the form $\sum_{n=0}^{\infty}a_{n}\frac{x^{n}}{n!}$ with coefficients in $R$. It is clear that 
$(\hurw_{R}[[x]],+,\cdot)$ is a ring with sum and product of ordinary series, that is,

$$f(x)+g(x)=\sum_{n=0}^{\infty}(a_{n}+b_{n})\frac{x^{n}}{n!},$$  
$$f(x)\cdot g(x)=\sum_{n=0}^{\infty}\sum_{k=0}^{n}\binom{n}{k}a_{k}b_{n-k}\frac{x^{n}}{n!},$$ 

where $f(x)=\sum_{n=0}^{\infty}a_{n}\frac{x^{n}}{n!}$, 
$g(x)=\sum_{n=0}^{\infty}b_{n}\frac{x^{n}}{n!}\in\hurw_{R}[[x]]$. The ring $\hurw_{R}[[x]]$ is
known as ring of Hurwitz of power series (ve\'ase [4]).

Let $\rho_{x}:(\hurw_{R},+,*)\rightarrow(\hurw_{R}[[x]],+,\cdot)$ be an isomorphism given by
$$\rho_{x}(\textbf{a})=\rho_{x}((a_{n})_{n\in\N})=\sum_{n=0}^{\infty}a_{n}\frac{x^{n}}{n!}=f(x).$$

If $\rho_{x}(\textbf{a})=f(x)$ y $\rho_{x}(\textbf{b})=g(x)$, then

$$\rho_{x}(\textbf{a}\ast\textbf{b})=\rho_{x}(\textbf{a})\cdot\rho_{x}(\textbf{b})$$
and
$$\rho_{x}(\textbf{a}^{-1})=\frac{1}{\rho_{x}(\textbf{a})}=(\rho_{x}(\textbf{a}))^{-1}$$

Let $\delta$ be a derivation on $\hurw_{R}$ define by

$$\delta((a_{n})_{n\in\N})=(a_{n+1})_{n\in\N}$$

with $\delta(\textbf{a}*\textbf{b})=\delta(\textbf{a})\ast\textbf{b}+\textbf{a}*\delta(\textbf{b})$
and let $\delta_{x}\equiv\frac{d}{dx}$ be a derivation on $\hurw_{R}[[x]]$ given by 

$$\delta_{x}\left(\sum_{n=0}^{\infty}a_{n}\frac{x^{n}}{n!}\right)=\sum_{n=0}^{\infty}a_{n}\frac{\delta_{x}x^{n}}{n!}=\sum_{n=0}^{\infty}a_{n+1}\frac{x^{n}}{n!}.$$

Then

\begin{equation}\label{relation_delta_mu}
\delta_{x}\rho_{x}(\textbf{a})=\rho_{x}(\delta\textbf{a}),
\end{equation}

that is, the following diagram commutes

\begin{equation}
\xymatrix{
 \hurw_{R} \ar[d]^{\rho_{x}} \ar[r]^{\delta} & \hurw_{R} \ar[d]^{\rho_{x}}\\
 \hurw_{R}[[x]] \ar[r]^{\delta_{x}} & \hurw_{R}[[x]] 
}
\end{equation}

It is easy to see that $\rho_{x}(\ker\delta)=\ker\delta_{x}$, that is, the constant ring of 
$\hurw_{R}[[x]]$ is the image by $\rho_{x}$ of the constant ring of $\hurw_{R}$.

\section{The Ring Autonomous $(\Opa(\hurw_{R}[[x]]),\boxplus,\odot)$}

In this section the autonomous ring is constructed. This is a ring of sequences of autonomous polynomials defined on the ring $\hurw_{R}[[x]] $. We first define the autonomous operator and study its properties. We then use this operator to construct the binary operations of the autonomous ring. We start with the following definition

\begin{definition}\label{def_opa}
Let $\hurw_{S}$ be the ring of sequences on the ring $S=\hurw_{R}[[x]]$. We define the 
\textbf{autonomous operator} $\Opa$ as the linear map
$\Opa:\hurw_{R}[[x]]\rightarrow\hurw_{S}$ defined by 
\begin{equation}
\Opa(f(x))=(A_{0}(f(x)),A_{1}(f(x)),A_{2}(f(x)),A_{3}(f(x)),...)
\end{equation}
where the $A_{n}$ are recursively defined by
\begin{eqnarray*}
A_{0}(f(x))&=&x,\\
A_{1}(f(x))&=&f(x),\\
A_{n+1}(f(x))&=&Y_{n}(A_{1}(f(x)),A_{2}(f(x))...,A_{n}(f(x));\delta^{1}f(x),...,\delta^{n}f(x)),
\end{eqnarray*}
$n\geq 1$.\\
The polynomials $A_{n}$ in the indeterminate $\delta^{0}f,\delta^{1}f,...,\delta^{n-1}f$ will be called \textbf{autonomous polynomials}.
\end{definition}

Some autonomous polynomials $A_{n}$ are
\begin{eqnarray*}
A_{2}(f(x)) &=& A_{1}(f(x))\delta^{1}f(x)=f(x)\delta f(x),\\
A_{3}(f(x)) &=& A_{2}(f(x))\delta f(x)+A_{1}^{2}(f(x))\delta^{2}f(x),\\
&=&f(x)(\delta f(x))^{2}+f^{2}(x)\delta^{2}f(x),\\
A_{4}(f(x)) &=& A_{3}(f(x))\delta^{1}f(x)+3A_{1}(f(x))A_{2}(f(x))\delta^{2}f(x)+A_{1}^{3}(f(x))\delta^{3}f(x),\\
&=&f(\delta f(x))^{3}+4f^{2}(x)\delta^{1}f(x)\delta^{2}f(x)+f^{3}(x)\delta^{3}f(x).\\
\end{eqnarray*}

In the following Lemma we will see how to construct the autonomous polynomial $A_{n}(f(x))$ from the polynomial $A_{n-1}(f(x))$

\begin{lemma}\label{lemma_delta_opa}
Take $f(x)\in\hurw_{R}[[x]]$. Then
\begin{equation}
A_{n+1}(f(x))=f(x)\delta(A_{n}(f(x))),
\end{equation}
con $n\geq1$.
\end{lemma}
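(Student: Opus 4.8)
The plan is to prove this by induction on $n$, using the recursive definition of the autonomous polynomials together with the chain-rule structure of the Bell polynomials $Y_n$. The base case $n=1$ is immediate: $A_2(f(x)) = Y_1(A_1(f(x)); \delta f(x)) = A_1(f(x))\delta f(x) = f(x)\delta f(x)$, and since $A_1(f(x)) = f(x)$ and $\delta(A_1(f(x))) = \delta f(x)$, this reads $A_2(f(x)) = f(x)\delta(A_1(f(x)))$ as desired.

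For the inductive step, the key observation I would exploit is the recurrence satisfied by Bell polynomials themselves. The Bell polynomials obey the identity
\begin{equation*}
Y_{n+1}(b_1,\dots,b_{n+1}) = \sum_{k=0}^{n}\binom{n}{k} Y_{n-k}(b_1,\dots,b_{n-k})\, b_{k+1},
\end{equation*}
or equivalently, viewing $Y_n(b_1,\dots,b_n)$ as encoding the $n$-th derivative of a composition, one has the differentiation rule that passing from $Y_n$ to $Y_{n+1}$ amounts to applying the derivation $\delta$ (which sends $b_k \mapsto b_{k+1}$ and acts as a derivation on the polynomial algebra). Concretely, I would establish that
\begin{equation*}
A_{n+1}(f(x)) = Y_n(A_1,\dots,A_n;\delta f,\dots,\delta^n f)
\end{equation*}
can be rewritten by recognizing the right-hand side as the result of applying the derivation $\delta$, extended appropriately, to $A_n$ and then multiplying by the factor coming from $A_1 = f(x)$. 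The cleanest route: show that $\delta(A_n(f(x)))$, computed via the derivation rule $\delta(f(x)) = \delta f$ and the recursive structure, satisfies $\delta(A_n) = f(x)^{-1} A_{n+1}$ formally — but since we are in an integral domain and want an identity in $\hurw_R[[x]]$, I would instead prove directly that $f(x)\,\delta(A_n(f(x)))$ equals the Bell-polynomial expression defining $A_{n+1}(f(x))$.

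The main technical step, and the place I expect the real obstacle, is verifying that $\delta$ applied to $A_n(f(x)) = Y_{n-1}(A_1,\dots,A_{n-1};\delta f,\dots,\delta^{n-1}f)$ produces exactly $f(x)^{-1}Y_n(A_1,\dots,A_n;\delta f,\dots,\delta^n f)$. Here $\delta$ acts both on the $\delta^j f$ arguments (raising the order, $\delta^j f \mapsto \delta^{j+1}f$) and on the $A_j$ arguments via the inductive hypothesis $\delta(A_j) = f(x)^{-1}A_{j+1}$ — and we must track how the combinatorial coefficients in the Faà di Bruno formula \eqref{partialBell} reorganize. This is essentially the classical fact that the Bell polynomials are generated by iterating a single derivation; I would either cite the standard recurrence for $Y_n$ from reference [3] and match terms, or give a short direct computation on the partition-sum \eqref{partialBell}, checking that the extra factor of $f(x)$ is absorbed precisely by the shift in the first argument $A_1 = f(x)$. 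Once this identity is in hand, Lemma~\ref{lemma_delta_opa} follows by rearranging, and it will be used downstream to give the closed recursive description of $\Opa$.
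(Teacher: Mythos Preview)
Your approach is correct and close in spirit to the paper's: both argue by induction on $n$, both feed the inductive hypothesis $A_{j+1}=f\,\delta A_j$ (for $j<n$) back into the Bell-polynomial expression, and both must check that the resulting combinatorics match. The difference is in how that check is carried out. The paper expands $A_{n+1}=Y_n(A_1,\dots,A_n;\delta f,\dots,\delta^n f)$ term by term, uses $A_j=f\,\delta A_{j-1}$ to pull a common factor of $f$ out front, and then compares the bracketed expression directly with the Leibniz expansion of $\delta A_n$; it is a bare-hands match of the first few terms of each $B_{n,k}$-block. Your route is more structural: you invoke the differentiation identity
\[
Y_{n}(b_1,\dots,b_{n};a_1,\dots,a_{n})=\sum_{j}\frac{\partial Y_{n-1}}{\partial b_j}\,b_{j+1}+b_1\sum_{j}\frac{\partial Y_{n-1}}{\partial a_j}\,a_{j+1},
\]
substitute $b_j=A_j$, $a_j=\delta^j f$, and observe that after multiplying $\delta A_n$ by $f$ the factor $b_1=A_1=f$ makes the two sums coincide exactly with $Y_n$. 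This isolates the single Bell-polynomial identity that drives the lemma, and it avoids the term-matching that the paper leaves somewhat schematic. Your passing worry about writing $\delta A_j=f^{-1}A_{j+1}$ is harmless: once you multiply through by $f$ before invoking the hypothesis, no inversion is needed and the argument stays inside $\hurw_R[[x]]$.
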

\begin{proof}
By definition $A_{2}(f)=f\delta_{}(f)$. Now suppose by induction hypothesis that
$A_{n}(f)=f\delta(A_{n-1}(f))$. Put $A_{n}\equiv A_{n}(f)$. On the one hand, we have
\begin{eqnarray*}
A_{n+1}&=&\delta_{}fA_{n}+\delta_{}^{2}f\{A_{1}A_{n-1}+A_{2}A_{n-2}+\cdots\}+\cdots\\
&=&\delta_{}(f)f\delta(A_{n-1})+\delta_{}^{2}(f)\{ff\delta(A_{n-2})+f\delta(A_{1})A_{n-2}+\cdots\}+\cdots\\
&=&f[\delta_{}(f)\delta(A_{n-1})+\delta_{}^{2}(f)\{f\delta(A_{n-2})+\delta(A_{1})A_{n-2}+\cdots\}+\cdots]
\end{eqnarray*} 
On the other hand,
\begin{eqnarray*}
f\delta(A_{n})&=&f\delta(\delta_{}(f)A_{n-1}+\delta_{}(f)\{A_{1}A_{n-2}+A_{2}A_{n-3}+\cdots\}+\cdots)\\
&=&f[\delta_{}(f)\delta(A_{n-1})+\delta_{}^{2}(f)A_{n-1}+\delta_{}^{2}(f)\delta(A_{1})A_{n-2}+\delta_{}^{2}(f)A_{1}\delta(A_{n-2})+\cdots]\\
&=&f[\delta_{}(f)\delta(A_{n-1})+\delta_{}^{2}(f)\{f\delta(A_{n-2})+\delta(A_{1})A_{n-2}+\cdots\}+\cdots]
\end{eqnarray*}
Now by comparing the results we obtain $A_{n+1}=f\delta(A_{n})$.
\end{proof}

From the above we obtain the following corollary

\begin{corollary}\label{coro_delta_aut}
Take $f(x)\in\hurw_{R}[[x]]$. Then
\begin{equation}
A_{n}(f(x))=\overbrace{f(x)\delta(\cdots f(x)\delta(f(x)))}^{n}.
\end{equation}
\end{corollary}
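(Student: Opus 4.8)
The plan is to prove the corollary by induction on $n$, using Lemma~\ref{lemma_delta_opa} as the single structural engine. The statement to establish is that
\begin{equation*}
A_{n}(f(x)) = \overbrace{f(x)\,\delta\bigl(\cdots f(x)\,\delta(f(x))\bigr)}^{n},
\end{equation*}
where the right-hand side denotes $n$ nested applications of the operator $g \mapsto f(x)\,\delta(g)$ applied eventually to $f(x)$ itself (so for $n=1$ it is just $f(x)$, for $n=2$ it is $f(x)\delta(f(x))$, and so on).

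First I would set up notation: let $T$ denote the operator on $\hurw_{R}[[x]]$ (really on $\hurw_{R}$ transported via $\rho_{x}$, or directly on the Hurwitz series using the derivation $\delta$) defined by $T(g) = f(x)\,\delta(g)$. Then the claimed formula reads $A_{n}(f) = T^{n-1}(f)$ for $n \geq 1$, with the convention $T^{0} = \mathrm{id}$. The base case $n = 1$ is immediate from the definition $A_{1}(f) = f(x)$. For the inductive step, assume $A_{n}(f) = T^{n-1}(f)$ for some $n \geq 1$. By Lemma~\ref{lemma_delta_opa} we have $A_{n+1}(f) = f(x)\,\delta(A_{n}(f)) = T(A_{n}(f)) = T(T^{n-1}(f)) = T^{n}(f)$, which is exactly the formula for $n+1$. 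This closes the induction.

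The only genuine subtlety, and the main thing worth spelling out, is the correct reading of the telescoping expression on the right-hand side: one must be careful that the overbrace counts $n$ occurrences of $f(x)$ (equivalently $n-1$ applications of $\delta$), matching the recursion $A_{n+1} = f\,\delta(A_{n})$ which introduces one new factor of $f$ and one new $\delta$ at each stage. It is also worth noting that Lemma~\ref{lemma_delta_opa} is only asserted for $n \geq 1$, i.e. it gives $A_{n+1} = f\delta(A_{n})$ for $n \geq 1$, so the induction must be anchored at $n = 1$ (not $n = 0$), and indeed the $n=1$ and $n=2$ cases are handled directly by the definition, so there is no gap.

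I do not expect any real obstacle here: the corollary is essentially a reformulation of the lemma, and the proof is a one-line induction once the nested-composition notation is unpacked. If desired, one could instead phrase it non-recursively by simply writing $A_{n}(f) = (M_{f} \circ \delta)^{\,n-1}(f)$ where $M_{f}$ is multiplication by $f$, but the overbrace formulation in the statement is the more transparent one for later use in Section~4 and Section~5.
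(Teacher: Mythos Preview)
Your proposal is correct and matches the paper's approach: the corollary is stated immediately after Lemma~\ref{lemma_delta_opa} with the remark ``From the above we obtain the following corollary,'' so the intended argument is exactly the one-line induction on $n$ using $A_{n+1}(f)=f\,\delta(A_{n}(f))$ that you spell out. Your care about the overbrace counting $n$ copies of $f$ (hence $n-1$ applications of $\delta$) and about anchoring the induction at $n=1$ is appropriate and consistent with the paper's conventions.
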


Now we will define operations in $\Opa(\hurw_{R}[[x]])$ that allow to give it a ring structure.

\begin{definition}
We define the sum $\boxplus$ in $\Opa(\hurw_{R}[[x]])$ as follows
\begin{equation}
\Opa(f)\boxplus\Opa(g)=(B_{n}(f,g))_{n\in\N}
\end{equation}
where
\begin{eqnarray}
B_{0}(f,g)&=&x\\
B_{1}(f,g)&=&A_{1}(f)+A_{2}(g)\\
B_{n}(f,g)&=&A_{n}(f)+A_{n}(g)+H_{n}(f,g),\ \ \ n\geq2
\end{eqnarray}
where $H_{n}(f,g)$ is a polynomial in the variables $f,f^{\prime},...,f^{(n-1)}$ and 
$ g,g^{\prime},...,g^{(n-1)}$ fulfilling
\begin{equation}\label{eqn_Hn}
H_{n+1}(f,g)=f\delta A_{n}(g)+g\delta A_{n}(f)+(f+g)\delta H_{n}(f,g)
\end{equation}
\end{definition}

\begin{lemma}\label{lemma_ope_+}
Take $f(x),g(x)\in\hurw_{R}[[x]]$. Then
\begin{equation}
\Opa(f(x))\boxplus\Opa(g(x))=\Opa(f(x)+g(x))
\end{equation}
\end{lemma}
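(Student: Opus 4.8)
The plan is to prove the identity $\Opa(f)\boxplus\Opa(g)=\Opa(f+g)$ by showing that the two sequences agree term-by-term, i.e. that $B_{n}(f,g)=A_{n}(f+g)$ for all $n\geq 0$. The natural tool is induction on $n$, exploiting the fact that by Lemma~\ref{lemma_delta_opa} the autonomous polynomials satisfy the recursion $A_{n+1}(h)=h\,\delta(A_{n}(h))$, while the $B_{n}$ are defined by $B_{n}=A_{n}(f)+A_{n}(g)+H_{n}(f,g)$ with $H_{n}$ governed by the recursion~(\ref{eqn_Hn}). So the heart of the argument is to convert the defining recursion for $B_{n}$ into the single clean recursion $B_{n+1}(f,g)=(f+g)\,\delta(B_{n}(f,g))$, after which the claim follows by comparison with $A_{n+1}(f+g)=(f+g)\,\delta(A_{n}(f+g))$ and the inductive hypothesis.

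First I would check the base cases: $B_{0}(f,g)=x=A_{0}(f+g)$, and $B_{1}(f,g)=A_{1}(f)+A_{1}(g)=f+g=A_{1}(f+g)$ (here I read the ``$A_{2}(g)$'' in the definition of $B_{1}$ as a typo for $A_{1}(g)$, since otherwise degrees do not match). Then, assuming $B_{n}(f,g)=A_{n}(f+g)$, I would compute
\begin{align*}
(f+g)\,\delta B_{n}(f,g) &= (f+g)\,\delta\bigl(A_{n}(f)+A_{n}(g)+H_{n}(f,g)\bigr)\\
&= f\,\delta A_{n}(f) + g\,\delta A_{n}(g) + g\,\delta A_{n}(f) + f\,\delta A_{n}(g) + (f+g)\,\delta H_{n}(f,g)\\
&= A_{n+1}(f) + A_{n+1}(g) + H_{n+1}(f,g)\\
&= B_{n+1}(f,g),
\end{align*}
where the second equality just distributes, the third uses Lemma~\ref{lemma_delta_opa} on the first two terms and the defining recursion~(\ref{eqn_Hn}) for $H_{n+1}$ on the remaining three, and the last is the definition of $B_{n+1}$. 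Combining this with $(f+g)\,\delta B_{n}(f,g)=(f+g)\,\delta A_{n}(f+g)=A_{n+1}(f+g)$ (inductive hypothesis plus Lemma~\ref{lemma_delta_opa}) gives $B_{n+1}(f,g)=A_{n+1}(f+g)$, closing the induction.

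The only genuinely delicate point is the bookkeeping in the step $(f+g)\,\delta(A_{n}(f)+A_{n}(g))=A_{n+1}(f)+A_{n+1}(g)+g\,\delta A_{n}(f)+f\,\delta A_{n}(g)$: one must add and subtract the cross terms $g\,\delta A_{n}(f)$ and $f\,\delta A_{n}(g)$ correctly so that the pure terms reassemble into $A_{n+1}(f)$ and $A_{n+1}(g)$ via Lemma~\ref{lemma_delta_opa} and the leftover is exactly the right-hand side of~(\ref{eqn_Hn}). This is where the recursion~(\ref{eqn_Hn}) for $H_{n}$ was reverse-engineered from in the first place, so once the algebra is lined up it matches on the nose; I expect this matching of the cross-term recursion to be the main (though routine) obstacle, with everything else being formal manipulation in the derivation-ring $(\hurw_{R}[[x]],\delta)$.
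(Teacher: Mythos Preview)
Your proof is correct and follows essentially the same approach as the paper: both argue by induction on $n$, using Lemma~\ref{lemma_delta_opa} to write $A_{n+1}(f+g)=(f+g)\,\delta A_{n}(f+g)$, distributing over $A_{n}(f)+A_{n}(g)+H_{n}(f,g)$, and matching the cross terms against the recursion~(\ref{eqn_Hn}) for $H_{n+1}$. Your version is in fact more carefully written---you state the base cases explicitly and correctly flag the typo $A_{2}(g)$ for $A_{1}(g)$ in the definition of $B_{1}$---but the underlying argument is identical.
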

\begin{proof}
Finding the autonomous polynomials for the sum $ f + g $ we obtain
\begin{equation}
A_{n}(f+g)=A_{n}(f)+A_{n}(g)+H_{n}(f,g).
\end{equation}
where $H_{n}(f,g)$ is a polynomial in the variables $f,f^{\prime},...,f^{(n-1)}$ and 
$g,g^{\prime},...,g^{(n-1)}$. If $H_{n}(f,g)$ satisfies (\ref{eqn_Hn}) and if we use the Lemma \ref{lemma_delta_opa}, we have
\begin{eqnarray*}
A_{n+1}(f+g)&=&(f+g)\delta A_{n}(f+g)\\
&=&(f+g)\delta[A_{n}(f)+A_{n}(g)+H_{n}(f,g)]\\
&=&f\delta A_{n}(f)+g\delta A_{n}(g)+f\delta A_{n}(g)\\
&&+g\delta A_{n}(f)+(f+g)H_{n}(f,g)\\
&=&A_{n+1}(f)+A_{n+1}(g)+H_{n+1}(f,g)
\end{eqnarray*}
Then our affirmation is followed.
\end{proof}

We want to find the additive inverse of the element $\Opa(f)$ in $\Opa(\hurw_{R}[[x]])$. For this we have first the definition

\begin{definition}
Take $a\in R-\{0\}$. Let $\exp(a)$ denote the expansion in non-negative integer powers from $a$
\begin{equation}
\exp(a)=(1,a,a^{2},a^{3},...).
\end{equation}
For $a=0$ define
\begin{equation}
\exp(0)=(1,0,0,0,...).
\end{equation}
\end{definition}

The reason for denoting the expansion in non-negative integer powers from $a$ as $\exp(a)$ is because 
$\rho_{x}\exp(a)=e^{ax}$ is the exponential function knowed. The following result tells us that the set of all expansions $\exp(a)$ form a commutative ring with unity.

\begin{theorem}
Let $\exp(R)$ denote the set of all expansions $\exp(a)$, $a\in R$. Then 
$(\exp(R),\ast,\cdot)$ is a commutative ring with unity $\exp(1)$ and componentwise product.
\end{theorem}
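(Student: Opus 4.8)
The plan is to show that the map $\exp\colon R\to\exp(R)$, $a\mapsto\exp(a)$, is a bijection that carries the ring operations $(+,\cdot)$ of the commutative ring $R$ to the operations $(\ast,\cdot)$ on $\exp(R)$; once this is established, every ring axiom for $(\exp(R),\ast,\cdot)$ follows immediately by transport of structure from $R$.

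First I would record the two basic computations. For the Hurwitz product the binomial theorem gives
\begin{equation*}
\exp(a)\ast\exp(b)=\left(\sum_{h=0}^{n}\binom{n}{h}a^{h}b^{n-h}\right)_{n\in\N}=\left((a+b)^{n}\right)_{n\in\N}=\exp(a+b),
\end{equation*}
where one uses the convention $0^{0}=1$ to reconcile the two cases in the definition of $\exp$. For the componentwise product one has directly
\begin{equation*}
\exp(a)\cdot\exp(b)=(a^{n}b^{n})_{n\in\N}=\exp(ab).
\end{equation*}
In particular $\exp(R)$ is closed under both $\ast$ and $\cdot$, so the two operations are genuinely defined on $\exp(R)$.

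Next I would read off the ring structure from these identities. From $\exp(a)\ast\exp(b)=\exp(a+b)$ we get that $\ast$ on $\exp(R)$ is commutative and associative (inherited from $+$ on $R$), that $\exp(0)=\textbf{e}=(1,0,0,\dots)$ is the neutral element, and that $\exp(-a)$ is the $\ast$-inverse of $\exp(a)$; hence $(\exp(R),\ast)$ is an abelian group. From $\exp(a)\cdot\exp(b)=\exp(ab)$ we get that $\cdot$ on $\exp(R)$ is commutative and associative with unit $\exp(1)=\textbf{1}=(1,1,1,\dots)$. Distributivity is the one line
\begin{equation*}
\exp(a)\cdot\bigl(\exp(b)\ast\exp(c)\bigr)=\exp\bigl(a(b+c)\bigr)=\exp(ab+ac)=\bigl(\exp(a)\cdot\exp(b)\bigr)\ast\bigl(\exp(a)\cdot\exp(c)\bigr),
\end{equation*}
combining the two identities with distributivity in $R$.

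Finally, $\exp$ is injective because $a$ appears as the entry of index $1$ of $\exp(a)$, and it is surjective onto $\exp(R)$ by definition; together with the two intertwining identities this makes $\exp$ a ring isomorphism from $(R,+,\cdot)$ onto $(\exp(R),\ast,\cdot)$, so in particular $(\exp(R),\ast,\cdot)$ is a commutative ring with unity $\exp(1)$. I do not expect any real obstacle here: the only point requiring a little care is the bookkeeping that the additive identity of the new ring is $\exp(0)=\textbf{e}$ rather than a ``zero'' sequence, and the sole nontrivial computation invoked is the binomial expansion of $(a+b)^{n}$.
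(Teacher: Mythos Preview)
Your proof is correct and follows essentially the same approach as the paper: both arguments rest on the two identities $\exp(a)\ast\exp(b)=\exp(a+b)$ and $\exp(a)\cdot\exp(b)=\exp(ab)$, and then read off the ring axioms from the corresponding axioms in $R$. Your framing in terms of transport of structure along the bijection $\exp$ is a bit more explicit (you spell out the binomial computation and record that $\exp$ is in fact a ring isomorphism), but the underlying argument is the same.
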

\begin{proof}
It is easy to see that $\exp(a)\ast\exp(b)=\exp(a+b)$. Then the operation $\ast$ is closed in
$\exp(R)$. It is also clear that $(\exp(R),\ast)$ is a commutative group with unit $\exp(0)$ and
inverse elements of the form $\exp(-a)$. On the other hand, $\exp(a)\cdot\exp(b)=\exp(ab)$ and the
operation $\cdot$ is closed in $\exp(R)$. Then $(\exp(R),\cdot)$ is a commutative monoid with unity.
Finally
\begin{eqnarray*}
\exp(a)\cdot(\exp(b)\ast\exp(c))&=&(\exp(a)\cdot\exp(b))\ast(\exp(a)\cdot\exp(c))
\end{eqnarray*}
and $\exp(R)$ is a commutative ring.
\end{proof}

As we will notice below, the ring $\exp(R)$ acts on the set $\Opa(\hurw_{R}[[x]])$

\begin{proposition}\label{prop_mult_esc}
Take $f\in\hurw_{R}[[x]]$ and $a \in R$. Then
\begin{equation}
\exp(a)\cdot\Opa(f)=\Opa(af).
\end{equation}
where the product $\exp(a)\cdot\Opa(f)$ is componentwise.
\end{proposition}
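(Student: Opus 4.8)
The plan is to prove the identity componentwise, showing that for each $n \geq 0$ the $n$-th component of $\exp(a) \cdot \Opa(f)$ equals $A_n(af)$, the $n$-th autonomous polynomial of $af$. By the definition of the componentwise product and of $\exp(a) = (1, a, a^2, a^3, \ldots)$, the $n$-th component of the left-hand side is $a^n A_n(f)$. So the statement reduces to the purely algebraic claim
\begin{equation}
A_n(af(x)) = a^n A_n(f(x)), \qquad n \geq 0,
\end{equation}
which I would establish by induction on $n$ using the recursion from Corollary \ref{coro_delta_aut} (or equivalently Lemma \ref{lemma_delta_opa}).

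First I would check the base cases: $A_0(af) = x = a^0 A_0(f)$ and $A_1(af) = af = a^1 A_1(f)$, both immediate from Definition \ref{def_opa}. For the inductive step, assume $A_n(af) = a^n A_n(f)$. By Lemma \ref{lemma_delta_opa} applied to the series $af(x)$,
\begin{equation}
A_{n+1}(af) = (af)\,\delta\bigl(A_n(af)\bigr) = a f \,\delta\bigl(a^n A_n(f)\bigr) = a^{n+1} f\,\delta\bigl(A_n(f)\bigr) = a^{n+1} A_{n+1}(f),
\end{equation}
where I used the induction hypothesis, the fact that $a \in R$ is a constant so $\delta(a^n A_n(f)) = a^n \delta(A_n(f))$ (the derivation $\delta$ on $\hurw_R$ is $R$-linear and kills constants), and Lemma \ref{lemma_delta_opa} again for $f$. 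This closes the induction, and assembling the components gives $\exp(a) \cdot \Opa(f) = (a^n A_n(f))_{n \in \N} = (A_n(af))_{n \in \N} = \Opa(af)$.

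I do not anticipate a serious obstacle here; the only point requiring a little care is that $\delta$ commutes with multiplication by the scalar $a \in R$, which is exactly the statement that $a$ lies in the constant ring $\ker \delta$ — true because $\exp(a) = (a^n)_{n}$ has $\delta$ of it equal to $(a^{n+1})_n = a\cdot(a^n)_n$ only when... in fact more simply, constants $a \in R \subset \hurw_R$ embed as $(a,0,0,\ldots)$ is not how scalars act; here the scalar multiplication on $\hurw_R[[x]]$ by $a\in R$ is coefficientwise, and $\delta_x$ as defined clearly satisfies $\delta_x(a\,h(x)) = a\,\delta_x(h(x))$. So the interchange is legitimate and the argument goes through cleanly. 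The main structural takeaway — which is presumably why the proposition is stated — is that this exhibits $\exp(R)$ acting on $\Opa(\hurw_R[[x]])$ compatibly with the scalar action of $R$ on $\hurw_R[[x]]$, setting up the $R$-module structure used later.
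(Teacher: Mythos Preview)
Your argument is correct. The reduction to showing $A_n(af)=a^nA_n(f)$ componentwise is exactly what the paper does, and your base cases match.

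The difference lies in the inductive step. You invoke Lemma~\ref{lemma_delta_opa}, writing $A_{n+1}(af)=(af)\,\delta(A_n(af))$ and pulling the scalar $a$ through $\delta$ by $R$-linearity. The paper instead goes back to Definition~\ref{def_opa} and the Fa\`a di Bruno expansion~(\ref{faa})--(\ref{partialBell}): it writes $A_{n+1}(af)$ as the double sum over partitions, substitutes $a^{j_i}A_{j_i}$ for each factor (strong induction) and $a\,\delta^k f$ for the outer derivative, and collects the exponent $a^{j_1+2j_2+\cdots+nj_n+1}=a^{n+1}$ using the partition identity. Your route is shorter and avoids the partition combinatorics entirely; the paper's route is self-contained from the original definition and does not rely on Lemma~\ref{lemma_delta_opa}. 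Either is perfectly adequate here. The paper also isolates the case $a=0$ at the end, which in your formulation is absorbed into the induction under the convention $0^0=1$ consistent with $\exp(0)=(1,0,0,\ldots)$.

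One stylistic remark: the last paragraph of your proposal, where you momentarily worry about how constants embed in $\hurw_R$ before settling on the correct point that $\delta_x(a\,h)=a\,\delta_x h$ for $a\in R$, should simply be stated once and cleanly. The hesitation is unnecessary---$\delta_x$ is manifestly $R$-linear on $\hurw_R[[x]]$ from its definition.
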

\begin{proof}
Let $a\neq0$. We will prove that $A_{n}(af)=a^{n}A_{n}(f)$. The proof will be by induction. For $n=0$, it is trivial. It is true for $n=1$, since $A_{1}(af)=af=aA_{1}(f)$. Now suppose that our statement holds for every number smaller than $n$ and we will consider
the case $n+1$. We have
\begin{eqnarray*}
A_{n+1}(af)&=&\sum_{k=1}^{n}\sum_{\vert p(n)\vert=k}\dfrac{n!}{j_{1}!\cdots j_{n}!}\left[\frac{aA_{1}}{1!}\right]^{j_{1}}\cdots\left[\frac{a^{n} A_{n}}{n!}\right]^{j_{n}}a\delta^{k}f\\
&=&\sum_{k=1}^{n}\sum_{\vert p(n)\vert=k}\frac{n!a^{j_{1}+2j_{2}+\cdots +nj_{n}+1}}{j_{1}!\cdots j_{n}!}\left[\frac{A_{1}}{1!}\right]^{j_{1}}\cdots \left[\frac{A_{n}}{n!}\right]^{j_{n}}\delta^{k}f\\
&=&a^{n+1}\sum_{k=1}^{n}\sum_{\vert p(n)\vert=k}\frac{n!}{j_{1}!\cdots j_{n}!}\left[\frac{A_{1}}{1!}\right]^{j_{1}}\cdots \left[\frac{A_{n}}{n!}\right]^{j_{n}}\delta^{k}f\\
&=&a^{n+1}A_{n+1}(f)
\end{eqnarray*}
Now set $a=0$. Then
$$\Opa(0)=\Opa(0f)=\exp(0)\Opa(f)$$
and our statement turns out to be true.
\end{proof}

With everything stated above, we are ready to show that $\Opa(\hurw_{R}[[x]])$ is a commutative 
group with sum $\boxplus$ and inverse elements $\exp(-1)\cdot\Opa(f)$

\begin{theorem}
$(\Opa(\hurw_{R}[[x]]),\boxplus)$ is a commutative group.
\end{theorem}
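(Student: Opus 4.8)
The plan is to verify directly that $\boxplus$ satisfies the group axioms, relying throughout on the transport principle established in Lemma~\ref{lemma_ope_+}, namely $\Opa(f)\boxplus\Opa(g)=\Opa(f+g)$, together with the scalar action from Proposition~\ref{prop_mult_esc}. The key observation is that $\Opa$ is injective on $\hurw_{R}[[x]]$: since $A_{1}(f)=f$, the element $f$ can be read off from the second component of $\Opa(f)$, so $\Opa(f)=\Opa(g)$ forces $f=g$. Hence $\Opa$ is a bijection from $\hurw_{R}[[x]]$ onto its image $\Opa(\hurw_{R}[[x]])$, and Lemma~\ref{lemma_ope_+} says this bijection intertwines the ordinary addition of $(\hurw_{R}[[x]],+)$ with $\boxplus$. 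Therefore $(\Opa(\hurw_{R}[[x]]),\boxplus)$ is isomorphic as a magma to $(\hurw_{R}[[x]],+)$, and every group axiom transfers.

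Concretely, I would carry out the following steps. First, closure: for $f,g\in\hurw_{R}[[x]]$ we have $\Opa(f)\boxplus\Opa(g)=\Opa(f+g)\in\Opa(\hurw_{R}[[x]])$ by Lemma~\ref{lemma_ope_+}. Second, associativity: $(\Opa(f)\boxplus\Opa(g))\boxplus\Opa(h)=\Opa(f+g)\boxplus\Opa(h)=\Opa((f+g)+h)=\Opa(f+(g+h))=\Opa(f)\boxplus\Opa(h+g\text{'s regroup})$, the middle equality being associativity of $+$ in $\hurw_{R}[[x]]$. Third, identity: the element $\Opa(0)=(x,0,0,\dots)$ satisfies $\Opa(f)\boxplus\Opa(0)=\Opa(f+0)=\Opa(f)$. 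Fourth, inverses: by Proposition~\ref{prop_mult_esc}, $\exp(-1)\cdot\Opa(f)=\Opa(-f)$, and then $\Opa(f)\boxplus\Opa(-f)=\Opa(f+(-f))=\Opa(0)$, so $\exp(-1)\cdot\Opa(f)$ is the $\boxplus$-inverse of $\Opa(f)$. Fifth, commutativity: $\Opa(f)\boxplus\Opa(g)=\Opa(f+g)=\Opa(g+f)=\Opa(g)\boxplus\Opa(f)$.

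There is essentially no serious obstacle here; the whole statement is a formal consequence of Lemma~\ref{lemma_ope_+} and Proposition~\ref{prop_mult_esc}, which did the real work (verifying that the polynomials $H_{n}$ with the prescribed recursion genuinely realize $A_{n}(f+g)-A_{n}(f)-A_{n}(g)$, and that scaling $f$ by $a$ scales $A_{n}$ by $a^{n}$). The only point that deserves an explicit word is the injectivity of $\Opa$, which legitimizes "transporting" the group structure and guarantees that $\Opa(0)$ is the unique two-sided identity and $\Opa(-f)$ the unique inverse of $\Opa(f)$. If one prefers not to invoke injectivity, one can simply note that all the identities above are verified on representatives and that Lemma~\ref{lemma_ope_+} shows $\boxplus$ is well defined as the pushforward of $+$; either way the proof is a short paragraph. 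The main thing to be careful about is cosmetic: stating the identity element as $\Opa(0)=(x,0,0,\dots)$ so the reader sees that the zeroth slot is $x$, consistent with the convention $A_{0}(f)=x$ in Definition~\ref{def_opa}.
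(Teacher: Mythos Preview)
Your proposal is correct and follows essentially the same route as the paper: closure via Lemma~\ref{lemma_ope_+}, identity $\Opa(0)=(x,0,0,\dots)$, inverse $\exp(-1)\cdot\Opa(f)=\Opa(-f)$ via Proposition~\ref{prop_mult_esc}, and associativity/commutativity transported from $(\hurw_{R}[[x]],+)$. Your explicit remark on the injectivity of $\Opa$ (via $A_{1}(f)=f$) is a helpful addition the paper leaves implicit; just clean up the garbled phrase in your associativity line before submitting.
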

\begin{proof}
From Lemma \ref{lemma_ope_+}, $\Opa(f)\boxplus\Opa(g)\in\Opa(\hurw_{R}[[x]])$. Now
take $0\in\hurw_{R}[[x]]$. So it's easy to see that $\Opa(0)=(x,0,0,...)=\textbf{0}_{\Opa}$ and by the Lemma above
\begin{eqnarray*}
\Opa(f)\boxplus\textbf{0}_{\Opa}&=&\Opa(f)\boxplus\Opa(0)\\
&=&\Opa(f+0)\\
&=&\Opa(f)
\end{eqnarray*}
then $\textbf{0}_{\Opa}$ is the neutral element in $(\Opa(\hurw_{R}[[x]]),\boxplus)$. By the Proposition above it is easy to show that $\exp(-1)\Opa(f)$ is the additive inverse of $\Opa(f)$.
Finally the associativity and commutativity of $(\Opa(\hurw_{R}[[x]]),\boxplus)$ follow from those of $(\hurw_{R}[[x]],+)$.
\end{proof}

From the above it follows that the map $\lambda:(\Opa(\hurw_{R}[[x]]),\boxplus)\rightarrow(\hurw_{R}[[x]],+)$ is a group isomorphism. Next we will define a product in $\Opa(\hurw_{R}[[x]])$

\begin{definition}
We define the product $\odot$ in $\Opa(\hurw_{R}[[x]])$ as follows
\begin{equation}
\Opa(f)\odot\Opa(g)=\left(C_{n}(f,g)\right)_{n\in\N}
\end{equation}
with
\begin{eqnarray}
C_{0}(f,g)&=&x\\
C_{n}(f,g)&=&\sum_{l=1}^{n}\mathcal{A}_{l}(f)\mathcal{A}_{n-l+1}(g), \ \ n\geq1
\end{eqnarray}
where
\begin{equation}
\mathcal{A}_{l}(f)=\sum_{\vert p(n)\vert=l}\alpha_{\vert p(n)\vert} A_{j_{1}}(f)A_{j_{2}}(f)\cdots A_{j_{r}}(f)
\end{equation}
$p(n)=j_{1}+j_{2}+\cdots+j_{r}$ and the $\alpha_{\vert p(n)\vert}$ are suitable numbers.
\end{definition}

As noted, performing multiplications on $\Opa(\hurw_{R}[[x]])$ seems very hard with the definition above. However, the following lemma will allow us an easier way

\begin{lemma}\label{lemma_ope_prod}
$\Opa(f)\odot\Opa(g)=\Opa(fg)$ for all $f,g\in\hurw_{R}[[x]]$.
\end{lemma}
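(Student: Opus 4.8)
The plan is to exploit Corollary~\ref{coro_delta_aut}, which gives the closed form $A_n(f)=\overbrace{f\delta(\cdots f\delta(f))}^{n}$, together with the fact (from Lemma~\ref{lemma_delta_opa}) that $A_{n+1}(h)=h\,\delta A_n(h)$ for any $h\in\hurw_R[[x]]$. The claim $\Opa(f)\odot\Opa(g)=\Opa(fg)$ amounts, componentwise, to showing $C_n(f,g)=A_n(fg)$ for all $n\geq 0$. Since $A_0=x=C_0$ trivially, and $A_1(fg)=fg=C_1(f,g)$, the natural route is induction on $n$: assuming $C_n(f,g)=A_n(fg)$, I would show $C_{n+1}(f,g)=A_{n+1}(fg)=(fg)\,\delta A_n(fg)=(fg)\,\delta C_n(f,g)$.

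First I would establish a product-rule recursion for the quantities $\A_l(f)$ analogous to the ones already proved for $A_n$ and $H_n$. Concretely, I expect that the family $\{\A_l(f)\}$ and the partial sums $C_n(f,g)=\sum_{l=1}^n \A_l(f)\A_{n-l+1}(g)$ satisfy
\begin{equation*}
C_{n+1}(f,g)=(fg)\,\delta\!\left(C_n(f,g)\right),
\end{equation*}
which, once checked, closes the induction immediately by the displayed identity above. To see this recursion, I would use the defining relation of $\odot$ to write $C_{n+1}$ as a convolution $\sum_{l=1}^{n+1}\A_l(f)\A_{n-l+2}(g)$, then apply the Leibniz rule for $\delta$ to each product $\A_l(f)\A_{n-l+1}(g)$ in $\delta C_n(f,g)$, and verify that multiplying by $fg$ and using $\A$-level recursions of the shape $f\,\delta\A_l(f)=\A_{l+1}(f)+(\text{cross terms})$ reproduces exactly the convolution defining $C_{n+1}$; the combinatorial coefficients $\alpha_{\lvert p(n)\rvert}$ are precisely what is needed to make these cross terms cancel/combine correctly.

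An alternative, and in my view cleaner, route avoids the $\A_l$ bookkeeping entirely: since $\lambda:(\Opa(\hurw_R[[x]]),\boxplus)\to(\hurw_R[[x]],+)$ is already known to be a group isomorphism sending $\Opa(h)\mapsto h$ (equivalently $A_1(h)$), it suffices to verify that $\odot$ corresponds under $\lambda$ to the ordinary product on $\hurw_R[[x]]$. One checks $A_1(C_n(f,g))$-type consistency, but more directly: both $\Opa(f)\odot\Opa(g)$ and $\Opa(fg)$ are determined by their first nontrivial component together with the universal recursion $A_{n+1}(\,\cdot\,)=(\text{first component})\cdot\delta A_n(\,\cdot\,)$, because that recursion (Lemma~\ref{lemma_delta_opa}) shows every $A_n(h)$ is a fixed polynomial functional of $h$ alone. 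So if I can show the product $\odot$ is \emph{defined} so that its $n=1$ component is $fg$ and its sequence obeys the same recursion $C_{n+1}=(fg)\,\delta C_n$, the two sequences coincide term by term.

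The main obstacle is the first approach's verification that the coefficients $\alpha_{\lvert p(n)\rvert}$ in the definition of $\A_l$ are exactly right—i.e., that the convolution recursion $C_{n+1}(f,g)=(fg)\delta C_n(f,g)$ genuinely holds with the stated (only implicitly specified) constants. In practice I would sidestep this by running the induction in the second form: compute $A_n(fg)$ directly from Corollary~\ref{coro_delta_aut} by expanding $(fg)\delta(\cdots)$ via Leibniz, collect terms according to how many $\delta$'s land on the $f$-factors versus the $g$-factors, and recognize the resulting sum as $\sum_{l=1}^{n}\A_l(f)\A_{n-l+1}(g)$—this both proves the lemma and retroactively identifies the $\alpha$'s. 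The routine but slightly delicate part is tracking the multinomial coefficients produced by repeated Leibniz expansion and matching them against the partition-indexed sums defining $\A_l$; I would present that match as the one computational lemma and leave the bookkeeping to a short display.
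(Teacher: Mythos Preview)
Your proposal is correct and lands on essentially the paper's approach: induct using $A_{n+1}(fg)=(fg)\,\delta A_n(fg)$ from Lemma~\ref{lemma_delta_opa}, Leibniz-expand $(fg)\,\delta C_n(f,g)$, and regroup the pieces $f\delta\mathcal{A}_l(f)$, $A_1(f)\mathcal{A}_l(f)$, etc., to recognize $C_{n+1}(f,g)$. The ``cleaner route via $\lambda$'' you float is not an independent shortcut---knowing that $\Opa(f)\odot\Opa(g)$ lies in the image of $\Opa$ \emph{is} the content of the lemma---but you correctly fall back to the Leibniz computation, which is exactly what the paper does.
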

\begin{proof}
Set $A_{n}(f)\equiv A_{n}(f(x))$. Then
\begin{eqnarray*}
A_{1}(fg)&=&fg=A_{1}(f)A_{1}(g),\\
A_{2}(fg)&=&(ff^{\prime})(g^{2})+(f^{2})(gg^{\prime})=A_{2}(f)A_{1}^{2}(g)+A_{1}^{2}(f)A_{2}(g),\\
A_{3}(fg)&=&f^{3}(g(g^{\prime})^{2}+g^{2}g^{\prime\prime})+4f^{2}f^{\prime}g^{2}g^{\prime}+
(f(f^{\prime})^{2}+f^{2}f^{\prime\prime})g^{3}\\
&=&A_{1}^{3}(f)A_{3}(g)+4A_{1}(f)A_{2}(f)A_{1}(g)A_{2}(g)+A_{1}^{3}(g)A_{3}(f).
\end{eqnarray*}
Assume by induction that
\begin{equation}
A_{n}(fg)=\sum_{l=1}^{n}\mathcal{A}_{l}(f)\mathcal{A}_{n-l+1}(g).
\end{equation}
By the Lemma \ref{lemma_delta_opa}, $A_{n+1}(fg)=fg\delta A_{n}(fg)$. So using the induction hypothesis
\begin{eqnarray*}
A_{n+1}(fg)&=&fg\delta A_{n}(fg)\\
&=&fg\delta\left(\sum_{l=1}^{n}\mathcal{A}_{l}(f)\mathcal{A}_{n-l+1}(g)\right)\\
&=&fg\delta\left(\mathcal{A}_{1}(f)\mathcal{A}_{n}(g)+\sum_{l=2}^{n-1}\mathcal{A}_{l}(f)\mathcal{A}_{n-l+1}(g)+\mathcal{A}_{n}(f)\mathcal{A}_{n}(1)\right)\\
&=&f\delta\mathcal{A}_{1}(f)g\mathcal{A}_{n}(g)+f\mathcal{A}_{1}(f)g\delta\mathcal{A}_{n}(g)\\
&&+\sum_{l=2}^{n-1}[f\delta\mathcal{A}_{l}(f)g\mathcal{A}_{n-l+1}(g)+f\mathcal{A}_{l}(f)g\delta\mathcal{A}_{n-l+1}(g)]\\
&&+f\delta\mathcal{A}_{n}(f)g\mathcal{A}_{1}(g)+f\mathcal{A}_{n}(f)g\delta\mathcal{A}_{1}(g)
\end{eqnarray*}
Again we will use the Lemma \ref{lemma_delta_opa} and also that $A_{1}(f)=f$, $A_{1}(g)=g$. We have
\begin{eqnarray*}
f\delta\mathcal{A}_{1}(f)&=&f\delta A_{n}(f)=A_{n+1}(f)\\
f\delta\mathcal{A}_{n}(f)&=&f\delta A_{1}^{n}(f)=nA_{1}^{n-1}(f)f\delta A_{1}(f)=nA_{1}^{n-1}(f)A_{2}(f)\\
f\mathcal{A}_{l}(f)&=&A_{1}(f)\mathcal{A}_{l}(f),\ \ \ 1\leq l\leq n
\end{eqnarray*}
and
\begin{eqnarray*}
f\delta\mathcal{A}_{l}(f)&=&f\sum_{\vert p(n)\vert=l}\alpha_{\vert p(n)\vert}\sum_{k=1}^{l}A_{j_{1}}(f)\cdots \delta A_{j_{k}}(f)\cdots A_{j_{l}}(f)\\
&=&\sum_{\vert p(n)\vert=l}\alpha_{\vert p(n)\vert}\sum_{k=1}^{l}A_{j_{1}}(f)\cdots f\delta A_{j_{k}}(f)\cdots A_{j_{l}}(f)\\
&=&\sum_{\vert p(n)\vert=l}\alpha_{\vert p(n)\vert}\sum_{k=1}^{l}A_{j_{1}}(f)\cdots A_{j_{k}+1}(f)\cdots A_{j_{l}}(f).
\end{eqnarray*}
Likewise, $g\delta\mathcal{A}_{n-l+1}(g)$ is obtained simply by replacing $f$ by $g$ and by noting
that $n-l+1$ takes the same values as $l$ when it ranging between $1$ and $n$. Further we note that $A_{1}(f)\mathcal{A}_{l}(f)$ and $f\delta\mathcal{A}_{l}(f)$ are contained in $\mathcal{A}_{l+1}(f)$ and that $A_{1}(g)\mathcal{A}_{n-l+1}(g)$ and $g\delta\mathcal{A}_{n-l+1}(g)$ are contained in 
$\mathcal{A}_{n-l+2}(g)$ for $l$ ranging between $2$ and $n-1$. Finally we put all of the above together to obtain
\begin{eqnarray*}
A_{n+1}(fg)&=&A_{n+1}(f)A_{1}^{n+1}(g)+\sum_{l=2}^{n}\mathcal{A}_{l}(f)\mathcal{A}_{n-l+2}(g)+
A_{n+1}(g)A_{1}^{n+1}(f)\\
&=&\mathcal{A}_{1}(f)\mathcal{A}_{n+1}(g)+\sum_{l=2}^{n}\mathcal{A}_{l}(f)\mathcal{A}_{n-l+2}(g)+
\mathcal{A}_{1}(g)\mathcal{A}_{n+1}(l)\\
&=&\sum_{l=1}^{n+1}\mathcal{A}_{l}(f)\mathcal{A}_{n-l+2}(g)
\end{eqnarray*}
just as we wanted to show.
\end{proof}

Now we will show that $\Opa(\hurw_{R}[[x]])$ is a ring

\begin{theorem}
$(\Opa(\hurw_{R}[[x]]),\boxplus,\odot)$ is a commutative ring with neutral element 
$\textbf{0}_{\Opa}=(x,0,0,...)$ and unity $\textbf{1}_{\Opa}=(x,1,0,0,...)$. 
This ring will be called \textbf{autonomous ring} 
\end{theorem}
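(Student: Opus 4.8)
The plan is to transport the ring structure of $(\hurw_{R}[[x]],+,\cdot)$ across the map $\Opa$ using the two lemmas already proved, $\Opa(f)\boxplus\Opa(g)=\Opa(f+g)$ (Lemma~\ref{lemma_ope_+}) and $\Opa(f)\odot\Opa(g)=\Opa(fg)$ (Lemma~\ref{lemma_ope_prod}). First I would note that $\Opa$ is injective: since $A_{1}(f(x))=f(x)$, two sequences $\Opa(f)$ and $\Opa(g)$ that agree componentwise force $f=g$. Hence $\Opa:\hurw_{R}[[x]]\to\Opa(\hurw_{R}[[x]])$ is a bijection whose two-sided inverse is the map $\lambda$ introduced just before the statement, so $\lambda$ is well defined.

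Next I would pin down the distinguished elements. Closure of $\boxplus$ and $\odot$ is immediate from the two lemmas. The neutral element $\textbf{0}_{\Opa}=\Opa(0)=(x,0,0,\dots)$ was already identified in the preceding theorem. For the unity I would compute $\Opa(1)$: by definition $A_{0}(1)=x$ and $A_{1}(1)=1$, while by Corollary~\ref{coro_delta_aut} (equivalently Lemma~\ref{lemma_delta_opa}) $A_{n}(1)=1\cdot\delta(A_{n-1}(1))$, which vanishes for $n\geq2$ because $A_{1}(1)=1$ is a constant and $\delta$ annihilates constants. Thus $\Opa(1)=(x,1,0,0,\dots)=\textbf{1}_{\Opa}$, and Lemma~\ref{lemma_ope_prod} gives $\Opa(f)\odot\textbf{1}_{\Opa}=\Opa(f\cdot1)=\Opa(f)$.

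The remaining axioms then follow by pushing the corresponding identities in $(\hurw_{R}[[x]],+,\cdot)$ through $\Opa$. The abelian group $(\Opa(\hurw_{R}[[x]]),\boxplus)$ is the previous theorem; commutativity of $\odot$ is $\Opa(fg)=\Opa(gf)$; associativity is $(\Opa(f)\odot\Opa(g))\odot\Opa(h)=\Opa((fg)h)=\Opa(f(gh))=\Opa(f)\odot(\Opa(g)\odot\Opa(h))$; and distributivity is $\Opa(f)\odot(\Opa(g)\boxplus\Opa(h))=\Opa(f(g+h))=\Opa(fg+fh)=(\Opa(f)\odot\Opa(g))\boxplus(\Opa(f)\odot\Opa(h))$, each equality invoking Lemmas~\ref{lemma_ope_+} and~\ref{lemma_ope_prod} together with the ring axioms for $\hurw_{R}[[x]]$. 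This simultaneously shows that $\lambda$ is a ring isomorphism onto $(\hurw_{R}[[x]],+,\cdot)$.

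I do not anticipate a real obstacle: the substance of the theorem is entirely contained in the two structural lemmas, and what remains is formal transfer of structure along a bijection. The only point deserving a line of care is that the product $\odot$, as written with the coefficients $\alpha_{|p(n)|}$, is well defined and indeed coincides with $\Opa(fg)$ — but that is precisely the assertion of Lemma~\ref{lemma_ope_prod}, so once that lemma is in hand the theorem follows immediately.
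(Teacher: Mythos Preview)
Your proposal is correct and follows essentially the same route as the paper: both arguments rely on the previously established abelian group structure of $(\Opa(\hurw_{R}[[x]]),\boxplus)$ and then transport closure, unity, associativity, and distributivity from $(\hurw_{R}[[x]],+,\cdot)$ via Lemmas~\ref{lemma_ope_+} and~\ref{lemma_ope_prod}. You add two small refinements the paper leaves implicit---the injectivity of $\Opa$ (read off from $A_{1}(f)=f$) and the explicit verification that $\Opa(1)=(x,1,0,0,\dots)$ using Lemma~\ref{lemma_delta_opa}---but otherwise the proofs coincide.
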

\begin{proof}
It was already shown that $(\Opa(\hurw_{R}[[x]]),\boxplus)$ is a commutative group. By the Lemma \ref{lemma_ope_prod}, $\Opa(f(x))\odot\Opa(g(x))$ is contained in $\Opa(\hurw_{R}[[x]])$. Also $\textbf{1}_{\Opa}$ is a unit with respect to $\odot$ in 
$\Opa(\hurw_{R}[[x]])$, since $\Opa(1)=\textbf{1}_{\Opa}$ and
\begin{eqnarray*}
\textbf{1}_{\Opa}\odot\Opa(f(x))&=&\Opa(1\cdot f(x))\\
&=&\Opa(f(x)).
\end{eqnarray*}
The associativity follows from the associativity of $\cdot$ in $\hurw_{R}[[x]]$. Finally
\begin{eqnarray*}
\Opa(f)\odot(\Opa(g)\boxplus\Opa(h))&=&\Opa(f)\odot\Opa(g+h)\\
&=&\Opa(f(g+h))\\
&=&\Opa(fg+fh))\\
&=&\Opa(fg)\boxplus\Opa(fh)\\
&=&[\Opa(f)\odot\Opa(g)]\boxplus[\Opa(f)\odot\Opa(h)]
\end{eqnarray*}
Then $(\Opa(\hurw_{R}[[x]]),\boxplus,\odot)$ is a ring with unity $\textbf{1}_{\Opa}$.
\end{proof}

From the above it follows that the map $\lambda$ is an isormorphism of the ring $(\hurw_{R}[[x]],+,\cdot)$ in the ring $(\Opa(\hurw_{R}[[x]]),\boxplus,\odot)$. With the following result we will show how to obtain the inverse elements in $\Opa(\hurw_{R}[[x]])$

\begin{theorem}
$(\Opa(\hurw_{R}[[x]]^{*}),\odot)$ is a commutative group.
\end{theorem}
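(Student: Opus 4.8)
The plan is to transport everything through the ring isomorphism $\lambda:(\Opa(\hurw_{R}[[x]]),\boxplus,\odot)\to(\hurw_{R}[[x]],+,\cdot)$ already established, whose two‑sided inverse is the autonomous operator $\Opa$ itself (since $\lambda(\Opa(f))=f$ and $\Opa(\lambda(\mathfrak{a}))=\mathfrak{a}$). Because a ring isomorphism carries the group of units of one ring bijectively onto the group of units of the other, it suffices to identify the unit group of $(\hurw_{R}[[x]],+,\cdot)$ and to observe that $\Opa$ maps it onto $\Opa(\hurw_{R}[[x]]^{*})$.

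First I would recall from Section~2 that, under the isomorphism $\rho_{x}:(\hurw_{R},+,\ast)\to(\hurw_{R}[[x]],+,\cdot)$, a series $f(x)=\sum a_{n}x^{n}/n!$ is invertible with respect to $\cdot$ if and only if $a_{0}\in R^{*}$, the inverse $f^{-1}=1/f$ being $\rho_{x}$ of the explicit sequence $(b_{n})_{n\in\N}$ written there; in particular that formula does produce a genuine element of $\hurw_{R}[[x]]$. Thus $\hurw_{R}[[x]]^{*}$ is precisely this set of units, and $(\hurw_{R}[[x]]^{*},\cdot)$ is a commutative group, being the group of units of the commutative ring $(\hurw_{R}[[x]],+,\cdot)$.

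Next I would verify directly, using Lemma~\ref{lemma_ope_prod}, that $\Opa(\hurw_{R}[[x]]^{*})$ is closed under $\odot$ and under $\Opa(f)\mapsto\Opa(f^{-1})$: if $f,g\in\hurw_{R}[[x]]^{*}$ then $fg\in\hurw_{R}[[x]]^{*}$ and $\Opa(f)\odot\Opa(g)=\Opa(fg)$; moreover $1\in\hurw_{R}[[x]]^{*}$ gives $\textbf{1}_{\Opa}=\Opa(1)\in\Opa(\hurw_{R}[[x]]^{*})$, and $\Opa(f)\odot\Opa(f^{-1})=\Opa(ff^{-1})=\Opa(1)=\textbf{1}_{\Opa}$, so $\Opa(f^{-1})$ is the $\odot$‑inverse of $\Opa(f)$. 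Associativity and commutativity of $\odot$ on this set are inherited from those of $\cdot$ on $\hurw_{R}[[x]]$, equivalently from the commutative ring structure of $(\Opa(\hurw_{R}[[x]]),\boxplus,\odot)$ already proved. Hence $(\Opa(\hurw_{R}[[x]]^{*}),\odot)$ is a commutative group; in fact it is exactly the group of units of the autonomous ring, since $\lambda$ and $\Opa$ are mutually inverse ring isomorphisms and isomorphisms send units to units.

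There is no real obstacle here: the whole argument is carried by the isomorphism $\lambda$ together with Lemma~\ref{lemma_ope_prod}. The only point deserving a line of care is to pin down what $\hurw_{R}[[x]]^{*}$ denotes — the invertible elements for the product $\cdot$, characterized by $a_{0}\in R^{*}$ — and to note that the recursive formula for $f^{-1}$ from Section~2 indeed lands in $\hurw_{R}[[x]]$, so that $\Opa(f^{-1})$ is well defined and furnishes the inverse of $\Opa(f)$.
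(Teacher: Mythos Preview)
Your proof is correct and follows essentially the same approach as the paper: the key step in both is the computation $\Opa(f)\odot\Opa(f^{-1})=\Opa(ff^{-1})=\Opa(1)=\textbf{1}_{\Opa}$ via Lemma~\ref{lemma_ope_prod}, which exhibits $\Opa(f^{-1})$ as the $\odot$-inverse of $\Opa(f)$. You add more surrounding detail (the framing through the isomorphism $\lambda$, the explicit identification of $\hurw_{R}[[x]]^{*}$, and closure under $\odot$), but the core argument is identical to the paper's.
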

\begin{proof}
We have that
\begin{eqnarray*}
\textbf{1}_{\Opa}&=&\Opa(1)\\
&=&\Opa(f(x)f^{-1}(x))\\
&=&\Opa(f(x))\odot\Opa(f^{-1}(x)).
\end{eqnarray*}
Then $\Opa(f(x))$ has an inverse with respect to $\odot$ and
$$\Opa(f(x))^{-1}=\Opa(f^{-1}(x)).$$ 
Thus $(\Opa(\hurw_{R}[[x]]*),\odot)$ is a group.
\end{proof}

Now it will be shown that $\Opa(\hurw_{R}[[x]])$ is an $\exp(R)$-algebra

\begin{theorem}\label{theo_exp}
$\Opa(\hurw_{R}[[x]])$ is an $\exp(R)$-algebra.
\end{theorem}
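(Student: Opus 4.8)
The plan is to equip the commutative ring $(\Opa(\hurw_{R}[[x]]),\boxplus,\odot)$ with an $\exp(R)$-module structure and then verify that this structure is compatible with the product $\odot$. The natural candidate for the scalar action is the componentwise (Hadamard) product already available on sequences: for $\exp(a)\in\exp(R)$ and $\Opa(f)\in\Opa(\hurw_{R}[[x]])$ let $\exp(a)\cdot\Opa(f)$ denote their componentwise product. By Proposition \ref{prop_mult_esc} this equals $\Opa(af)$, so it lands back in $\Opa(\hurw_{R}[[x]])$; hence the action is well defined, and it is the ``expected'' one.

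First I would verify the four $\exp(R)$-module axioms, recalling from the theorem on $\exp(R)$ that $\exp(a)\ast\exp(b)=\exp(a+b)$ and $\exp(a)\cdot\exp(b)=\exp(ab)$. Each axiom collapses, via Proposition \ref{prop_mult_esc} and Lemma \ref{lemma_ope_+}, to an identity already known in $\hurw_{R}[[x]]$: distributivity over $\boxplus$ reads $\exp(a)\cdot(\Opa(f)\boxplus\Opa(g))=\Opa(a(f+g))=\Opa(af)\boxplus\Opa(ag)$; distributivity over $\ast$ reads $(\exp(a)\ast\exp(b))\cdot\Opa(f)=\Opa((a+b)f)=\Opa(af)\boxplus\Opa(bf)$; associativity reads $(\exp(a)\cdot\exp(b))\cdot\Opa(f)=\Opa((ab)f)=\exp(a)\cdot\Opa(bf)$; and $\exp(1)\cdot\Opa(f)=\Opa(1\cdot f)=\Opa(f)$ gives unitality. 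In particular $\exp(0)\cdot\Opa(f)=\Opa(0)=\textbf{0}_{\Opa}$, as it must be.

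Next I would check compatibility with the ring product, namely $\exp(a)\cdot(\Opa(f)\odot\Opa(g))=(\exp(a)\cdot\Opa(f))\odot\Opa(g)=\Opa(f)\odot(\exp(a)\cdot\Opa(g))$. Using Lemma \ref{lemma_ope_prod} and the commutativity and associativity of $\cdot$ in $\hurw_{R}[[x]]$, all three expressions equal $\Opa(a(fg))=\Opa((af)g)=\Opa(f(ag))$. Combined with the previous step, this shows $\Opa(\hurw_{R}[[x]])$ is an $\exp(R)$-algebra. An equivalent and perhaps cleaner route is to exhibit $\iota\colon\exp(R)\to\Opa(\hurw_{R}[[x]])$, $\exp(a)\mapsto\Opa(a)$ where $a$ denotes the constant series, and to verify with Lemmas \ref{lemma_ope_+} and \ref{lemma_ope_prod} that $\iota$ is a unital ring homomorphism whose image is automatically central since $\Opa(\hurw_{R}[[x]])$ is commutative; because $\Opa(a)\odot\Opa(f)=\Opa(af)$, the algebra structure induced by $\iota$ coincides with the one above. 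The only point demanding care — and it is not really an obstacle — is to keep the two products on sequences apart: the Hadamard product $\cdot$ serves both as the scalar multiplication and as the multiplication of $\exp(R)$, whereas the ring product of $\Opa(\hurw_{R}[[x]])$ is $\odot$; Proposition \ref{prop_mult_esc} together with Lemma \ref{lemma_ope_prod} are exactly what guarantee these interact in the manner the algebra axioms require.
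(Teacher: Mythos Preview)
Your proof is correct and follows essentially the same approach as the paper: both reduce the algebra axioms, via Proposition~\ref{prop_mult_esc} (together with Lemmas~\ref{lemma_ope_+} and~\ref{lemma_ope_prod}), to the corresponding ring identities in $\hurw_{R}[[x]]$. You are in fact more thorough than the paper, which only explicitly records the distributivity over $\boxplus$ and the compatibility with $\odot$ before invoking Proposition~\ref{prop_mult_esc}; your additional verification of the remaining module axioms and the alternative description via the homomorphism $\iota\colon\exp(a)\mapsto\Opa(a)$ are welcome clarifications but not a different strategy.
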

\begin{proof}
It is should show that
\begin{eqnarray}
\exp(a)[\Opa(f)\boxplus\Opa(g)]&=&\exp(a)\Opa(f)\boxplus\exp(a)\Opa(g)]\\
\exp(a)[\Opa(f)\odot\Opa(g)]&=&[\exp(a)\Opa(f)]\odot\Opa(g)=\Opa(f)\odot[\exp(a)\Opa(g)]
\end{eqnarray}
and this follows directly from $\Opa(a(f+g))$ and $\Opa(afg)$ along with Proposition \ref{prop_mult_esc}.
\end{proof}

In the following theorems we will discuss the structure of the autonomous ring.

\begin{theorem}\label{theo_delta_domi}
The ring $\Opa(\hurw_{R}[[x]])$ is an integral domain.
\end{theorem}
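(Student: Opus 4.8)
The plan is to lean on the ring isomorphism $\lambda\colon(\hurw_{R}[[x]],+,\cdot)\to(\Opa(\hurw_{R}[[x]]),\boxplus,\odot)$ recorded just above: being an integral domain is preserved under ring isomorphism, so it suffices to prove that $(\hurw_{R}[[x]],+,\cdot)$ has no zero divisors. By the isomorphism $\rho_{x}$ (which carries $\cdot$ to $\ast$) this is equivalent to showing that $(\hurw_{R},+,\ast)$ is an integral domain, and I would argue there directly by an order/valuation argument on sequences. Thus the combinatorics of the autonomous polynomials plays no role; everything has already been done in establishing that $\lambda$ is an isomorphism.

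First I would take nonzero elements $\textbf{a}=(a_{n})_{n\in\N}$ and $\textbf{b}=(b_{n})_{n\in\N}$ of $\hurw_{R}$ and set $p=\min\{n:a_{n}\neq 0\}$ and $q=\min\{n:b_{n}\neq 0\}$. Then I would inspect the coordinate of index $p+q$ of $\textbf{a}\ast\textbf{b}$, namely $\sum_{h=0}^{p+q}\binom{p+q}{h}a_{h}b_{p+q-h}$: every summand with $h<p$ vanishes because $a_{h}=0$, and every summand with $h>p$ vanishes because then $p+q-h<q$ forces $b_{p+q-h}=0$; hence the only surviving term is $h=p$, giving $(\textbf{a}\ast\textbf{b})_{p+q}=\binom{p+q}{p}\,a_{p}\,b_{q}$.

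To finish, I would observe that since $R$ has characteristic zero the canonical image in $R$ of the positive integer $\binom{p+q}{p}$ is nonzero, and since $R$ is an integral domain the product $\binom{p+q}{p}a_{p}b_{q}$ of three nonzero elements of $R$ is nonzero; hence $\textbf{a}\ast\textbf{b}\neq 0$. This shows $(\hurw_{R},+,\ast)$, and therefore $(\hurw_{R}[[x]],+,\cdot)$ and $(\Opa(\hurw_{R}[[x]]),\boxplus,\odot)$, has no zero divisors. A parallel route, if one prefers to avoid the explicit computation, is to pass to $F=\ffrac(R)$: since $F\supseteq\Q$ one has $\hurw_{F}[[x]]\cong F[[x]]$, an integral domain because $F$ is a field, and $\hurw_{R}[[x]]$ embeds as a subring of $\hurw_{F}[[x]]$, a subring of a domain being a domain.

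I do not expect a real obstacle here. The one point that must be handled with care is that $\hurw_{R}[[x]]$ is \emph{not} literally the ordinary power series ring $R[[x]]$ unless $R$ contains $\Q$, so one cannot simply invoke ``power series over a domain''; the characteristic-zero hypothesis must be used precisely at the step where the binomial coefficient $\binom{p+q}{p}$ is asserted to be a nonzero element of $R$ (equivalently, at the step identifying $\hurw_{F}[[x]]$ with $F[[x]]$ in the alternative route).
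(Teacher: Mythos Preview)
Your argument is correct and follows the same route as the paper: transfer the question via the isomorphism $\lambda$ (equivalently, via $\Opa(f)\odot\Opa(g)=\Opa(fg)$) to the fact that $\hurw_{R}[[x]]$ itself is an integral domain. The paper simply asserts this last fact as known, whereas you supply a complete proof via the order/valuation argument and correctly pinpoint where the characteristic-zero hypothesis is used.
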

\begin{proof}
We know that $\hurw_{R}[[x]]$ is domain when $R$ is also domain. Now take $f$, $g$ from 
$\hurw_{R}[[x]]$. If $\Opa(f)\odot\Opa(g)=\Opa(fg)=\textbf{0}_{\Opa}$, then $fg=0$. Then either 
$f=0$ or $g=0$. Thus either $\Opa(f)=\textbf{0}_{\Opa}$ or $\Opa(g)=\textbf{0}_{\Opa}$ and 
$\Opa(\hurw_{R}[[x]])$ is a domain as well.
\end{proof}

Let $I$ be an ideal in $R$. From [2] we use the following notation for ideals in $\hurw_{R}[[x]]$:

\begin{eqnarray}
I+\left\langle x\right\rangle&=&\left\{f(x)=\sum_{n=0}^{\infty}a_{n}\frac{x^{n}}{n!}:a_{0}\in I\right\}\\
H_{I}[[x]]&=&\left\{f(x)=\sum_{n=0}^{\infty}a_{n}\frac{x^{n}}{n!}:a_{n}\in I\right\}
\end{eqnarray}

Let $\chi:\Opa(\hurw_{R}[[x]])\rightarrow\hurw_{R}[[x]]$ be a homomorphism defined by 
$\chi(\Opa(f(x)))=A_{1}[f(x)]=f(x)$. We have

\begin{proposition}\label{propo_iso}
Let $I$ be an ideal in $R$. Then
\begin{enumerate}
\item $\Opa(\hurw_{R}[[x]])/\chi^{-1}(I+\left\langle x\right\rangle)\simeq\hurw_{R}[[x]]/(I+\left\langle x\right\rangle)\simeq R/I$.
\item $\Opa(\hurw_{R}[[x]])/\Opa(\hurw_{I}[[x]])\simeq\Opa(\hurw_{R}[[x]]/\hurw_{I}[[x]])$.
\end{enumerate}
\end{proposition}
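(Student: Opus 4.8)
The plan is to reduce everything to the first isomorphism theorem together with the ring isomorphism already available. Recall that $\chi:\Opa(\hurw_{R}[[x]])\to\hurw_{R}[[x]]$, $\Opa(f)\mapsto A_{1}[f]=f$, is a ring isomorphism: it is the inverse of the correspondence $f\mapsto\Opa(f)$, which is a bijection onto $\Opa(\hurw_{R}[[x]])$ because $A_{1}(f)=f$ and which respects $\boxplus,\odot$ by Lemmas \ref{lemma_ope_+} and \ref{lemma_ope_prod}. Consequently, for any ideal $J\subseteq\hurw_{R}[[x]]$ the preimage $\chi^{-1}(J)$ is an ideal of $\Opa(\hurw_{R}[[x]])$ and $\chi$ induces a ring isomorphism $\Opa(\hurw_{R}[[x]])/\chi^{-1}(J)\cong\hurw_{R}[[x]]/J$. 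Taking $J=I+\langle x\rangle$ gives at once the first isomorphism of (1). Taking $J=\hurw_{I}[[x]]$ and observing that $\chi^{-1}(\hurw_{I}[[x]])=\Opa(\hurw_{I}[[x]])$ (since $\chi(\Opa(f))=f$, one has $\Opa(f)\in\chi^{-1}(\hurw_{I}[[x]])\iff f\in\hurw_{I}[[x]]\iff\Opa(f)\in\Opa(\hurw_{I}[[x]])$) gives $\Opa(\hurw_{R}[[x]])/\Opa(\hurw_{I}[[x]])\cong\hurw_{R}[[x]]/\hurw_{I}[[x]]$, which is most of (2). That $I+\langle x\rangle$ and $\hurw_{I}[[x]]$ are ideals is recorded above following [2].

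For the second isomorphism of (1), I would use the evaluation homomorphism $\varepsilon:\hurw_{R}[[x]]\to R/I$ sending $\sum a_{n}x^{n}/n!$ to $a_{0}+I$. It is a ring homomorphism because the constant term of the product $f\cdot g$ in $\hurw_{R}[[x]]$ is $a_{0}b_{0}$, and it is surjective via constant series; its kernel is exactly $\{f:a_{0}\in I\}=I+\langle x\rangle$. Hence $\hurw_{R}[[x]]/(I+\langle x\rangle)\cong R/I$ by the first isomorphism theorem, and composing with the isomorphism of the previous paragraph finishes (1).

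It remains, for (2), to identify $\hurw_{R}[[x]]/\hurw_{I}[[x]]$ with $\Opa(\hurw_{R}[[x]]/\hurw_{I}[[x]])$. First note that $\delta_{x}$ maps $\hurw_{I}[[x]]$ into itself, since it merely shifts coefficients; therefore $\delta_{x}$ descends to a derivation on the quotient ring $T:=\hurw_{R}[[x]]/\hurw_{I}[[x]]$, and the autonomous operator $\Opa$, defined purely from the ring structure and the derivation, makes sense on $T$. The same formal argument that produced $\chi$ shows that for every commutative differential ring $S$ of characteristic zero the map $\Opa(h)\mapsto h$ is a ring isomorphism $\Opa(S)\cong S$; with $S=T$ this gives $\Opa(\hurw_{R}[[x]]/\hurw_{I}[[x]])\cong\hurw_{R}[[x]]/\hurw_{I}[[x]]\cong\Opa(\hurw_{R}[[x]])/\Opa(\hurw_{I}[[x]])$, as wanted. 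Equivalently one can argue directly: define $\Psi:\Opa(\hurw_{R}[[x]])\to\Opa(T)$ by $\Psi(\Opa(f))=\Opa(\bar f)$, check using $\Opa(f)\boxplus\Opa(g)=\Opa(f+g)$ and $\Opa(f)\odot\Opa(g)=\Opa(fg)$ that $\Psi$ is a surjective ring homomorphism, note $\ker\Psi=\Opa(\hurw_{I}[[x]])$ because $\Opa(\bar f)=\textbf{0}_{\Opa}\iff\bar f=0\iff f\in\hurw_{I}[[x]]$, and apply the first isomorphism theorem.

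All the individual verifications are routine bookkeeping; the one point I would treat carefully is the meaning of $\Opa$ applied to the quotient in (2): one must check the stability of $\hurw_{I}[[x]]$ under $\delta_{x}$ so that $T$ really is a differential ring, and recognize that $\Opa(S)\cong S$ is a formal consequence of the definitions, valid for any such $S$ and not peculiar to $\hurw_{R}[[x]]$. Everything else follows from transporting ideals and quotients along the isomorphism $\chi$.
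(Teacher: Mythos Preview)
Your proof is correct and follows essentially the same route as the paper: for (1) you compose $\chi$ with the canonical projection and apply the first isomorphism theorem (you additionally spell out the evaluation map giving $\hurw_{R}[[x]]/(I+\langle x\rangle)\cong R/I$, which the paper simply cites as known), and for (2) your ``direct'' alternative $\Psi(\Opa(f))=\Opa(\bar f)$ is exactly the paper's map $\sigma$, the only cosmetic difference being that the paper first identifies $\hurw_{R}[[x]]/\hurw_{I}[[x]]$ with $\hurw_{R/I}[[x]]$ before applying $\Opa$. Your added remark that $\chi$ is in fact an isomorphism (inverse to $\lambda$) and your check that $\hurw_{I}[[x]]$ is $\delta_{x}$-stable are welcome clarifications the paper leaves implicit.
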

\begin{proof}
\begin{enumerate}
\item Let $J=I+\left\langle x\right\rangle$. Define the map $\psi:\Opa(\hurw_{R}[[x]])\rightarrow\hurw_{R}[[x]]/J$ by $\psi=\tau\circ\chi$, where $\tau:\hurw_{R}[[x]]\rightarrow\hurw_{R}[[x]]/J$ 
is the canonical map. Then $\psi$ is an overjective homomorphism with $Ker(\psi)=\phi^{-1}(J)$ 
and hence $\Opa(\hurw_{R}[[x]])/\chi^{-1}(J)\simeq\hurw_{R}[[x]]/J$. The remainder is a known result.
\item Since $\hurw_{R}[[x]]/\hurw_{I}[[x]]\simeq\hurw_{R/I}[[x]]$, then 
$\Opa(\hurw_{R}[[x]]/\hurw_{I}[[x]])\simeq\Opa(\hurw_{R/I}[[x]])$. Define the map
$\sigma:\Opa(\hurw_{R}[[x]])\rightarrow\Opa(\hurw_{R/I}[[x]])$ by 
$\sigma(\Opa(f))=\Opa(\overline{f})$, where
$\overline{f(x)}=\sum_{n=0}^{\infty}\overline{a}_{n}\frac{x^{n}}{n!}$. Then $\sigma$ is an
overjective homomorphism with $Ker(\sigma)=\Opa(\hurw_{I}[[x]])$. Then
$\Opa(\hurw_{R}[[x]])/\Opa(\hurw_{I}[[x]])\simeq\Opa(\hurw_{R/I}[[x]])$.
\end{enumerate}
\end{proof}

\begin{theorem}\label{theo_maxi}
If $M_{\alpha}$ is the set of maximal ideals of $\hurw_{R}[[x]]$, then 
$\Opa(M_{\alpha})$ is the set of maximal ideals of $\Opa(\hurw_{R}[[x]])$.
\end{theorem}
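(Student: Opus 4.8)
The plan is to deduce everything from the fact, recorded just above, that $\Opa$ is a ring isomorphism of $(\hurw_{R}[[x]],+,\cdot)$ onto $(\Opa(\hurw_{R}[[x]]),\boxplus,\odot)$, with inverse $\lambda$ (equivalently $\chi$, since $\chi(\Opa(f))=f$). A ring isomorphism transports the whole ideal lattice and carries maximal ideals to maximal ideals; applying this to $\Opa$ yields the statement. Thus there is essentially no new mathematics to produce here, only a verification that the earlier results assemble correctly.

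First I would make explicit that, for an ideal $I$ of $\hurw_{R}[[x]]$, the image $\Opa(I)$ is an ideal of $\Opa(\hurw_{R}[[x]])$. Closure under $\boxplus$ follows from $\Opa(f)\boxplus\Opa(g)=\Opa(f+g)$ (Lemma \ref{lemma_ope_+}); the $\boxplus$-inverse of $\Opa(f)$ is $\exp(-1)\cdot\Opa(f)=\Opa(-f)$ (Proposition \ref{prop_mult_esc}), which lies in $\Opa(I)$ when $f\in I$; and absorption follows from $\Opa(g)\odot\Opa(f)=\Opa(gf)$ (Lemma \ref{lemma_ope_prod}) with $gf\in I$. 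In the same way $\lambda$ sends ideals of $\Opa(\hurw_{R}[[x]])$ to ideals of $\hurw_{R}[[x]]$, so $I\mapsto\Opa(I)$ is an inclusion-preserving bijection between the two ideal lattices, with inverse $J\mapsto\lambda(J)$.

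Next I would check that this bijection preserves maximality. By the first isomorphism theorem applied to $\lambda$ (the same device used in Proposition \ref{propo_iso}), $\Opa(\hurw_{R}[[x]])/\Opa(I)\cong\hurw_{R}[[x]]/I$ as rings; hence one quotient is a field precisely when the other is, i.e.\ $I$ is maximal in $\hurw_{R}[[x]]$ if and only if $\Opa(I)$ is maximal in $\Opa(\hurw_{R}[[x]])$. Since $\{M_{\alpha}\}$ is by hypothesis the complete list of maximal ideals of $\hurw_{R}[[x]]$ and $I\mapsto\Opa(I)$ is a bijection of ideals, $\{\Opa(M_{\alpha})\}$ is the complete list of maximal ideals of $\Opa(\hurw_{R}[[x]])$.

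The main obstacle is really just a bookkeeping point rather than a genuine difficulty: confirming that $\Opa(I)$ is closed under the ring operations and under $\boxplus$-negation, for which one must use that $\Opa$ converts $+$, $\cdot$, and scalar negation into $\boxplus$, $\odot$, and $\exp(-1)\cdot(-)$ respectively. Once the isomorphism properties of $\Opa$ from the preceding theorems are invoked, nothing further is needed.
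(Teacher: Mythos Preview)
Your proposal is correct and follows essentially the same strategy as the paper: both exploit that $\Opa$ (with inverse $\chi$) is a ring isomorphism, so that ideals and their maximality transfer between $\hurw_{R}[[x]]$ and $\Opa(\hurw_{R}[[x]])$. The paper's proof differs only cosmetically, phrasing one direction as $\Opa(M)=\chi^{-1}(M)$ and verifying the converse by an explicit element-level argument (writing $1=gh+f$ with $f\in M$) rather than by invoking the quotient-is-a-field criterion you use.
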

\begin{proof}
On the one hand, if $M$ is a maximal ideal of $\hurw_{R}[[x]]$, then $\Opa(M)=\chi^{-1}(M)$
is a maximal ideal of $\Opa(\hurw_{R}[[x]])$. On the other hand, let $\Opa(M)$ denote the maximal ideal of $\Opa(\hurw_{R}[[x]])$, then $\chi(\Opa(M))$ is an ideal of
$\hurw_{R}[[x]]$. Take $\Opa(h)\in\Opa(\hurw_{R}[[x]])\setminus\Opa(M)$. Then 
$(\Opa(M),\Opa(h))=\Opa(\hurw_{R}[[x]])$ and therefore there exists $\Opa(f)\in\Opa(M)$, 
$\Opa(g)\in\Opa(\hurw_{R}[[x]])$ such that $\textbf{1}_{\Opa}=\Opa(g)\odot\Opa(h)\boxplus\Opa(f)=\Opa(gh+f)=\Opa(1)$. Thus $1=gh+f\in(\chi(\Opa(M)),h)$ and it follows that $\chi(\Opa(M))$ is a maximal ideal of $\hurw_{R}[[x]]$.
\end{proof}

\begin{theorem}\label{the_prime}
If $P+\left\langle x\right\rangle$ and $\hurw_{P}[[x]]$ are prime ideals in $\hurw_{R}[[x]]$, then
$\Opa(P+\left\langle x\right\rangle)$ and $\Opa(\hurw_{P}[[x]])$ are prime ideals in 
$\Opa(\hurw_{R}[[x]])$.
\end{theorem}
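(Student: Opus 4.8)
The plan is to lean on the fact, already established, that the assignment $f\mapsto\Opa(f)$ is a ring isomorphism of $(\hurw_{R}[[x]],+,\cdot)$ onto $(\Opa(\hurw_{R}[[x]]),\boxplus,\odot)$, with two-sided inverse the evaluation homomorphism $\chi$, $\chi(\Opa(f))=f$. Indeed $\chi$ is surjective by construction, injective because the first nontrivial component of $\Opa(f)$ is $f$ itself, and additive and multiplicative by Lemmas \ref{lemma_ope_+} and \ref{lemma_ope_prod}. Under a ring isomorphism preimages of prime ideals are prime, so the whole statement reduces to the set-theoretic identity $\Opa(T)=\chi^{-1}(T)$, valid for every subset $T\subseteq\hurw_{R}[[x]]$ because $\chi\circ\Opa=\mathrm{id}$ and $\Opa$ is a bijection onto $\Opa(\hurw_{R}[[x]])$; applied to $T=P+\langle x\rangle$ and $T=\hurw_{P}[[x]]$ this already shows, as in Proposition \ref{propo_iso} and Theorem \ref{theo_maxi}, that $\Opa(P+\langle x\rangle)$ and $\Opa(\hurw_{P}[[x]])$ are ideals, and they are proper since $\chi(\textbf{1}_{\Opa})=1$ lies in neither $P+\langle x\rangle$ nor $\hurw_{P}[[x]]$.

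For the primeness I would simply run the argument used in Theorem \ref{theo_delta_domi}. Suppose $\Opa(f)\odot\Opa(g)\in\Opa(P+\langle x\rangle)$. By Lemma \ref{lemma_ope_prod} this element equals $\Opa(fg)$, and applying $\chi$ gives $fg\in P+\langle x\rangle$; since $P+\langle x\rangle$ is prime, $f\in P+\langle x\rangle$ or $g\in P+\langle x\rangle$, hence $\Opa(f)\in\Opa(P+\langle x\rangle)$ or $\Opa(g)\in\Opa(P+\langle x\rangle)$. Together with properness this makes $\Opa(P+\langle x\rangle)$ a prime ideal of $\Opa(\hurw_{R}[[x]])$. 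The case of $\hurw_{P}[[x]]$ is handled identically, replacing $P+\langle x\rangle$ by $\hurw_{P}[[x]]$ throughout; alternatively one invokes part (2) of Proposition \ref{propo_iso} to get $\Opa(\hurw_{R}[[x]])/\Opa(\hurw_{P}[[x]])\simeq\Opa(\hurw_{R/P}[[x]])$ and observes that the right-hand side is an integral domain by Theorem \ref{theo_delta_domi}, since $\hurw_{R/P}[[x]]$ is a domain whenever $R/P$ is.

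There is no real obstacle here: the entire content sits in the previously proved facts that $\Opa$ carries $+,\cdot$ to $\boxplus,\odot$ and is a bijection onto $\Opa(\hurw_{R}[[x]])$. The only point deserving a sentence of care is verifying that $\Opa$ applied to an ideal is genuinely an ideal of $\Opa(\hurw_{R}[[x]])$ (not merely a multiplicatively closed set), which is exactly the identity $\Opa(T)=\chi^{-1}(T)$ noted above together with the fact that $\chi$ is a ring homomorphism.
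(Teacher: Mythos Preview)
Your proof is correct and follows essentially the same route as the paper: both arguments reduce primeness to the identity $\Opa(f)\odot\Opa(g)=\Opa(fg)$ from Lemma~\ref{lemma_ope_prod}, infer $fg\in P+\langle x\rangle$ (resp.\ $\hurw_{P}[[x]]$), and use primeness there to conclude. Your write-up is actually more complete than the paper's, since you take care to justify that $\Opa(T)=\chi^{-1}(T)$ is a genuine proper ideal before checking the prime condition, and you note the alternative via Proposition~\ref{propo_iso}(2); the paper's proof simply asserts the key implication in one line and says the second case is the same.
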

\begin{proof}
If $\Opa(f)\odot\Opa(g)=\Opa(fg)\in\Opa(P+\left\langle x\right\rangle)$, then
$fg\in P+\left\langle x\right\rangle$ and therefore either $f\in P+\left\langle x\right\rangle$ or
$g\in P+\left\langle x\right\rangle$. Then either$\Opa(f)\in\Opa(P+\left\langle x\right\rangle)$
or $\Opa(g)\in\Opa(P+\left\langle x\right\rangle)$. Proof for the second part is the same
\end{proof}

The simplest autonomous ring arises when $R$ is a field. We denote this field by $\F$

\begin{theorem}\label{theo_ideal_princi}
Let $\F$ be a field. The ring $\Opa(\hurw_{\F}[[x]])$ is a domain of principal ideals being of the form
\begin{equation}
\left\langle\Opa(x^{n})\right\rangle=\Opa(x^{n})\odot\Opa(\hurw_{\F}[[x]])
\end{equation}
for all $n\geq1$ and with only ideal maximal
\begin{equation}
\left\langle\Opa(x)\right\rangle=\Opa(x)\odot\Opa(\hurw_{\F}[[x]])
\end{equation}
\end{theorem}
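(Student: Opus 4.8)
The plan is to exploit the ring isomorphism already in hand: by Lemmas~\ref{lemma_ope_+} and~\ref{lemma_ope_prod} the map $\Opa$ satisfies $\Opa(f)\boxplus\Opa(g)=\Opa(f+g)$ and $\Opa(f)\odot\Opa(g)=\Opa(fg)$, and it is injective since $A_{1}[f]=f$, so $\Opa\colon(\hurw_{\F}[[x]],+,\cdot)\to(\Opa(\hurw_{\F}[[x]]),\boxplus,\odot)$ is a ring isomorphism (with inverse $\lambda=\chi$). Under an isomorphism the lattices of ideals correspond, and a principal ideal $\langle f\rangle$ of $\hurw_{\F}[[x]]$ is carried to $\Opa(\langle f\rangle)=\Opa(f)\odot\Opa(\hurw_{\F}[[x]])=\langle\Opa(f)\rangle$. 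Hence it suffices to prove that, when $\F$ is a field of characteristic zero, the Hurwitz ring $\hurw_{\F}[[x]]$ is a principal ideal domain whose nonzero proper ideals are precisely the $\langle x^{n}\rangle$, $n\geq1$, with $\langle x\rangle$ as unique maximal ideal; applying $\Opa$ then yields the theorem (domainhood of $\Opa(\hurw_{\F}[[x]])$ being Theorem~\ref{theo_delta_domi}).

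First I would show that $\hurw_{\F}[[x]]$ is local with maximal ideal $\langle x\rangle=\{\sum a_{n}x^{n}/n!:a_{0}=0\}$. By the unit criterion of Section~2 (transported through $\rho_{x}$), a series $f=\sum a_{n}x^{n}/n!$ is invertible in $(\hurw_{\F}[[x]],\cdot)$ iff $a_{0}\in\F$ is nonzero; therefore the set of non-units is exactly $\langle x\rangle$, every proper ideal is contained in it, and $\langle x\rangle$ is maximal because $\hurw_{\F}[[x]]/\langle x\rangle\cong\F$ by Proposition~\ref{propo_iso}(1) with $I=0$. Thus $\langle x\rangle$ is the unique maximal ideal.

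Next I would classify all ideals. For $f\neq0$ set $v(f)=\min\{n:a_{n}\neq0\}$; with $m=v(f)$ one factors $f=x^{m}\cdot u$ in $\hurw_{\F}[[x]]$, where $x^{m}$ denotes the Hurwitz element with coefficient sequence $(0,\dots,0,m!,0,\dots)$ and $u$ has nonzero constant coefficient, hence is a unit by the previous step. (Equivalently, dividing the $n$-th coefficient by $n!$ — legitimate since $\F$ has characteristic zero — is a ring isomorphism $\hurw_{\F}[[x]]\cong\F[[x]]$ carrying the Hurwitz product to the ordinary Cauchy product, and $\F[[x]]$ is the standard discrete valuation ring.) Consequently, given a nonzero ideal $\mathfrak{a}$, put $n=\min\{v(g):0\neq g\in\mathfrak{a}\}$; choosing $g\in\mathfrak{a}$ with $v(g)=n$ and writing $g=x^{n}u$ with $u$ a unit gives $x^{n}\in\mathfrak{a}$, while every element of $\mathfrak{a}$ has order $\geq n$ and so lies in $\langle x^{n}\rangle$; hence $\mathfrak{a}=\langle x^{n}\rangle$. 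So the ideals of $\hurw_{\F}[[x]]$ are exactly $\{0\}$ and $\langle x^{n}\rangle$ for $n\geq0$; in particular $\hurw_{\F}[[x]]$ is a PID (and a domain by the remark in the proof of Theorem~\ref{theo_delta_domi}) with $\langle x\rangle$ its only maximal ideal.

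Finally I would transport everything back through $\Opa$: the ring $\Opa(\hurw_{\F}[[x]])$ is a domain in which every ideal is principal, the nonzero proper ones being exactly $\Opa(\langle x^{n}\rangle)=\Opa(x^{n})\odot\Opa(\hurw_{\F}[[x]])=\langle\Opa(x^{n})\rangle$ for $n\geq1$, and since a ring isomorphism preserves maximal ideals (consistently with Theorem~\ref{theo_maxi}) the ideal $\langle\Opa(x)\rangle=\Opa(\langle x\rangle)$ is the unique maximal ideal — which is the assertion. The only genuinely non-formal ingredient is the middle paragraph, namely that $\hurw_{\F}[[x]]$ is a discrete valuation ring; but that is routine, via either the order-function factorization or the identification $\hurw_{\F}[[x]]\cong\F[[x]]$, and everything else is pure transport of structure along $\Opa$.
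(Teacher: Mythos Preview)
Your proof is correct and follows essentially the same approach as the paper's: both argue by transporting the ideal structure of $\hurw_{\F}[[x]]$ across the ring isomorphism $\Opa$ (equivalently $\lambda$). The only difference is that the paper simply cites as ``a known fact'' that $\hurw_{\F}[[x]]$ is a principal ideal domain with ideals $\langle x^{n}\rangle$ and unique maximal ideal $\langle x\rangle$, whereas you supply a self-contained proof of this via the order valuation (or the characteristic-zero identification $\hurw_{\F}[[x]]\cong\F[[x]]$).
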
 
\begin{proof}
It is a known fact that if $\F$ is a field, then $(\hurw_{\F}[[x]],+,\cdot)$ is a domain
of prime ideals with ideals $\left\langle x^{n}\right\rangle$, $n\geq1$. Since $\lambda$  is an 
is an isomorphism of $(\hurw_{\F}[[x]],+,\cdot)$ into $(\Opa(\hurw_{\F}[[x]]),\boxplus,\odot)$,
then it follows that $\left\langle\Opa(x^{n})\right\rangle$ is the set of ideals in
$\Opa(\hurw_{\F}[[x]])$ and likewise it is proved that $\left\langle\Opa(x)\right\rangle$ is its maximal ideal.
\end{proof}

\begin{theorem}\label{theo_campo}
Let $\F$ be a field. The set $\Opa[\hurw_{\F}((x))]=\Opa(\hurw_{\F}[[x]])/\left\langle\Opa(x)\right\rangle$ is a field.
\end{theorem}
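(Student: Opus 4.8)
The plan is to transport the quotient through the ring isomorphism $\lambda\colon(\Opa(\hurw_{\F}[[x]]),\boxplus,\odot)\to(\hurw_{\F}[[x]],+,\cdot)$ established above. Since $\lambda(\Opa(f))=f$ for every $f$, we have $\lambda(\Opa(x))=x$, and because $\langle\Opa(x)\rangle=\Opa(x)\odot\Opa(\hurw_{\F}[[x]])$ by Theorem \ref{theo_ideal_princi}, $\lambda$ maps this ideal isomorphically onto $x\cdot\hurw_{\F}[[x]]=\langle x\rangle$. Hence
\[
\Opa(\hurw_{\F}[[x]])/\langle\Opa(x)\rangle \;\cong\; \hurw_{\F}[[x]]/\langle x\rangle \;\cong\; \F,
\]
where the last isomorphism is Proposition \ref{propo_iso}(1) applied with the ideal $I=(0)$ of $\F$ (equivalently, it is induced by the evaluation $\sum a_{n}x^{n}/n!\mapsto a_{0}$). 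As $\F$ is a field, this already proves the claim. One may also invoke Theorem \ref{theo_ideal_princi} directly: $\langle\Opa(x)\rangle$ is the unique maximal ideal of the commutative unital ring $\Opa(\hurw_{\F}[[x]])$, so the quotient is a field by the standard characterization of maximal ideals.

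For a self-contained argument I would instead show directly that every nonzero class in the quotient is invertible. The first step is to describe $\langle\Opa(x)\rangle$ explicitly: by Lemma \ref{lemma_ope_prod}, $\Opa(x)\odot\Opa(g)=\Opa(xg)$, so $\langle\Opa(x)\rangle=\{\Opa(xg):g\in\hurw_{\F}[[x]]\}$; since $\F$ has characteristic zero, multiplication by $x$ in $\hurw_{\F}[[x]]$ has image precisely the set of series with vanishing constant term, whence $\langle\Opa(x)\rangle=\{\Opa(f): a_{0}=0\}$ for $f=\sum a_{n}x^{n}/n!$. Consequently a class $\overline{\Opa(f)}$ is nonzero in the quotient exactly when $a_{0}\neq 0$, i.e. $a_{0}\in\F^{*}$. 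By the invertibility criterion for the Hurwitz product such an $f$ is a unit of $\hurw_{\F}[[x]]$, and since $\Opa(f)^{-1}=\Opa(f^{-1})\in\Opa(\hurw_{\F}[[x]])$ whenever $f$ is invertible (shown above), reducing this identity modulo $\langle\Opa(x)\rangle$ shows $\overline{\Opa(f)}$ is a unit in the quotient. Hence the quotient is a field.

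There is no serious obstacle here; the only point requiring a little care is the concrete identification $\langle\Opa(x)\rangle=\{\Opa(f): a_{0}=0\}$, which is where characteristic zero of $\F$ is used (otherwise $x\cdot\hurw_{\F}[[x]]$ could be a proper subset of the series with zero constant term, and a nonzero residue class need not have a representative $\Opa(f)$ with $a_{0}\in\F^{*}$). Once that is settled, everything reduces to the invertibility facts already proved for $\hurw_{R}[[x]]$ together with the isomorphism $\lambda$. It is also worth recording as a byproduct that $\Opa(\hurw_{\F}[[x]])$ is thereby a local principal ideal domain, i.e. a discrete valuation ring, with residue field $\F$.
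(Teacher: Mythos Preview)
Your proposal is correct and in fact subsumes the paper's argument: the paper's entire proof is the single observation that $\langle\Opa(x)\rangle$ is maximal (Theorem \ref{theo_ideal_princi}), whence the quotient is a field, which is precisely the alternative you mention after your first isomorphism argument. Your additional material---the explicit transport along $\lambda$, the identification of the residue field with $\F$ via Proposition \ref{propo_iso}(1), and the self-contained invertibility argument---goes well beyond what the paper records, but none of it is needed once maximality is known.
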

\begin{proof}
Since $\left\langle\Opa(x)\right\rangle$ is maximal, it follows by the Proposition \ref{propo_iso}
that $\Opa[\hurw_{\F}((x))]$ is a field. 
\end{proof}

We end this section by defining polynomials in $\Opa(\hurw_{R}[[x]])$. We begin by defining
the monomials
\begin{equation}
\Opa(x)_{\odot}^{k}=\Opa(x)\odot\Opa(x)\odot\cdots\odot\Opa(x)
\end{equation}
and then we have the
\begin{definition}
A polynomial of degree $n$ in $\Opa(\hurw_{R}[[x]])$ is one of the form
\begin{equation}
\bbox_{k=0}^{n}\exp(a_{k})\Opa(x)_{\odot}^{k}
\end{equation}
\end{definition}
Since
\begin{equation}
\bbox_{k=0}^{n}\exp(a_{k})\Opa(x)_{\odot}^{k}=\bbox_{k=0}^{n}\Opa(a_{k}x^{k})=\Opa\left(\sum_{k=0}^{n}a_{k}x^{k}\right),
\end{equation}
then a polynomial in $\Opa(\hurw_{R}[[x]])$ is the image for $\lambda$ of the polynomial
$\sum_{k=0}^{n}a_{k}x^{k}$. Then the ring $R[x]$ is mapped to $\Opa(R[x])$. If the polynomial $f(x)$
has factorization $f(x)=\prod_{i=1}^{n}(x-a_{i})$, then
\begin{eqnarray*}
\Opa(f(x))&=&\Opa\left(\prod_{i=1}^{n}(x-a_{i})\right)\\
&=&\bcast_{i=1}^{n}\Opa(x-a_{i})\\
&=&\bcast_{i=1}^{n}(x,x-a_{i},x-a_{i},...)\\
&=&\bcast_{i=1}^{n}[(x-a_{i},x-a_{i},x-a_{i},...)+(a_{i},0,0,...)]\\
&=&\bcast_{i=1}^{n}[(x-a_{i})\textbf{1}+(a_{i},0,0,...)]
\end{eqnarray*}

where $\textbf{1}=(1,1,1,...)$. Then the polynomial $\Opa(f(x))$ factors into 
$\Opa(R[x])$.

Finally, let $\epsilon_{a}:R[x]\rightarrow R$ be the evaluation map given by 
$\epsilon_{a}(f(x))=f(a)$. Now extend $\epsilon_{a}$ to $\Opa(R[x])$ by defining the map
$\overline{\epsilon_{a}}:\Opa(R[x])\rightarrow\hurw_{R}$ by
$\overline{\epsilon_{a}}(\Opa(f(x))=\Opa(\epsilon_{a}f(x))=\Opa(f(a))$. Denote the ring
$(Im(\overline{\epsilon_{a}}),\boxplus,\odot)$ the image ring of $\Opa(R[x])$ by the map
$\overline{\epsilon_{a}}$. Then $\overline{\epsilon_{a}}$ is a homorphism of rings
with kernel $Ker(\overline{\epsilon_{a}})=\left\langle\Opa(x-a)\right\rangle$. Thus
$\Opa(R[x])/\left\langle\Opa(x-a)\right\rangle\simeq Im(\overline{\epsilon_{a}})$.

\section{One-dimensional flow over domains}

A one-dimensional continuous dynamical system is a tuple $(S,X,\phi)$ in which $S\subseteq\R$ is 
the set of times, $X\subseteq\R$ is the phase space and $\phi:S\times X\rightarrow X$ is 
the system flow satisfying

\begin{enumerate}
\item $\phi(0,x)=x$
\item $\phi(t,\phi(s,x))=\phi(t+s,x)$
\end{enumerate}

When $S=R$ and putting $\phi_{t}(x)=\phi(t,x)$ we notice that $\phi_{t}$ is a group acting on 
the phase space $X$. When we put $S=\Z$, the dynamical system is referred to as a
discrete dynamical system. In this section we define a dynamical system with $S=R$ a ring, 
$X=\hurw_{T}[[t]]$ where $T$ is some ring with coefficients in $\hurw_{R}[[x]]$. 

From [6] we know that the solution $\phi$ of (\ref{autonoma}) is given by
\begin{equation}
\phi(t,x)=x+\sum_{n=1}^{\infty}A_{n}(f(x))\dfrac{t^{n}}{n!}.
\end{equation}

This solution is a flow defined on $f(x)$. Then we can notice that the operator $\Opa$ maps $f(x)$ to the action group $\{\phi_{t}:t\in\R\}$.  With this in mind we have the following definition

\begin{definition}
We define a \textbf{dynamic system} over the ring $R$ as the tuple $(R,\hurw_{S}[[t]],\Phi)$ where 
$R$ is the \textbf{set of times}, $\hurw_{S}[[t]]$ the \textbf{phase space} and $\Phi$ is the \textbf{flow}. 
\begin{equation}\label{flujo_diferencial}
\Phi(t,x,f(x))=x+\sum_{n=1}^{\infty}A_{n}(f(x))\dfrac{t^{n}}{n!},
\end{equation}
that is, $\Phi$ is the map
$\Phi:R\times\hurw_{S}[[t]]\times\hurw_{R}[[x]]\rightarrow\hurw_{S}[[t]]$ where $S=\hurw_{R}[[x]]$.
\end{definition}

\begin{theorem}
$\Phi(t,x,f(x))$ is a flow.
\end{theorem}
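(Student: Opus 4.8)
The plan is to verify the two flow axioms directly from the series definition \eqref{flujo_diferencial}, exploiting the fact established earlier that $\Phi(t,x,f(x))$ solves the initial value problem \eqref{autonoma} in the indeterminate $t$ with $u(0)=x$, and that the autonomous polynomials $A_n(f)$ satisfy the recursion $A_{n+1}(f)=f\,\delta(A_n(f))$ (Lemma \ref{lemma_delta_opa}). The first axiom, $\Phi(0,x,f(x))=x$, is immediate: setting $t=0$ in \eqref{flujo_diferencial} kills every term of the sum and leaves the constant term $x$. So the entire content is the cocycle/semigroup identity
\begin{equation*}
\Phi\bigl(t,\Phi(s,x,f(x)),f\bigr)=\Phi(t+s,x,f(x)).
\end{equation*}

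For this I would argue as follows. Fix $f\in\hurw_R[[x]]$ and regard $\psi(t):=\Phi(t,x,f(x))$ as an element of $\hurw_S[[t]]$ with $S=\hurw_R[[x]]$; by construction $\psi'(t)=f(\psi(t))$ and $\psi(0)=x$, where $'=\delta_t$. Now consider the two sides of the claimed identity as formal power series in $t$ (with $s$ a parameter). The right-hand side $t\mapsto\Phi(t+s,x,f(x))$ satisfies, by the chain rule for $\delta_t$, the differential equation $\tfrac{d}{dt}\Phi(t+s,x,f) = f\bigl(\Phi(t+s,x,f)\bigr)$, with value at $t=0$ equal to $\Phi(s,x,f)$. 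The left-hand side $t\mapsto\Phi(t,\psi(s),f)$ is by definition the flow started at the point $\psi(s)=\Phi(s,x,f)$, so it satisfies the same differential equation $\tfrac{d}{dt}(\cdot)=f(\cdot)$ with the same initial value $\psi(s)$ at $t=0$. Since solutions of $u'=f(u)$, $u(0)=c$ in $\hurw_{(\cdot)}[[t]]$ are uniquely determined coefficient-by-coefficient (the recursion $u_{n+1}=\delta(A_n)$ evaluated at the initial data forces every Taylor coefficient), the two series coincide. Equivalently, one shows directly that the coefficient of $t^n/n!$ on both sides is $A_n(f)$ evaluated along the appropriate substitution: on the right it is $A_n(f)\bigl(\Phi(s,x,f)\bigr)$ obtained from the Taylor expansion of $\Phi(t+s,\cdot)$ in $t$ about $t=0$, and on the left it is the same thing since $A_n$ is exactly the $n$-th Taylor coefficient of the flow in its time variable. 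An induction on $n$ using $A_{n+1}(f)=f\delta(A_n(f))$ closes this if one prefers to avoid invoking uniqueness as a black box.

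The main obstacle is making the substitution $\Phi(t,\psi(s),f)$ rigorous as a formal object and justifying the chain rule: $\psi(s)=x+\sum_{n\ge1}A_n(f(x))s^n/n!$ has nonzero constant term $x$, so substituting it into the $x$-argument of $f$ and of the $A_n$ requires that these be genuine (convergent-in-the-$\mathfrak m$-adic, or at least well-defined) power-series substitutions; this is fine because $f\in\hurw_R[[x]]$ is an honest power series in $x$ and $\psi(s)-x$ lies in the ideal $(s)$, so $f(\psi(s))$ is a well-defined element of $\hurw_R[[x]][[s]]$, and likewise $\tfrac{d}{dt}\Phi(t+s,x,f)=\delta_t\Phi$ evaluated after the shift $t\mapsto t+s$ is the legitimate formal derivative. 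Once this bookkeeping is set up, I would write the short induction: assume the coefficient identity up to order $n$, differentiate both sides in $t$, use $\Phi'=f(\Phi)$ on each side, and match coefficients using Lemma \ref{lemma_delta_opa}; the base case $n=0$ is the already-checked $\Phi(0,\cdot,f)=\mathrm{id}$.
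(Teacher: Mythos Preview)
Your proposal is correct and follows essentially the same route as the paper's own proof. The paper argues by direct Taylor-coefficient matching---writing $\Phi(s+t,x,f)=\sum_{n\ge 0}\delta_s^{n}\Phi(s,x,f)\,t^{n}/n!$ and invoking the identity $A_n(f(\Phi_s))=\delta_s^{n}\Phi(s,x,f)$ that comes from $\Phi$ solving the autonomous ODE---while you package the same identification as uniqueness of formal power-series solutions to $u'=f(u)$ with initial value $\Phi(s,x,f)$; the two are equivalent, and your discussion of why the substitution $x\mapsto\Phi_s$ is formally legitimate makes explicit a point the paper leaves unsaid.
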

\begin{proof}
When $t=0$ we have from (\ref{flujo_diferencial}) that $\Phi(0,x,f(x))=x$. We will now prove property 2) of a flow. On the one hand, the Taylor expansion of $\Phi(s+t,x,f(x))$ is $\sum_{n=0}^{\infty}\delta_{s}^{n}\Phi(s,x,f(x))\frac{t^{n}}{n!}$. On the other hand, making $\Phi_{s}=\Phi(s,x,f(x))$ we have
\begin{eqnarray*}
\Phi(t,\Phi(s,x,f(x)),f(x))&=&\Phi_{s}+\sum_{n=1}^{\infty}A_{n}(f(\Phi_{s}))\frac{t^{n}}{n!}\\
&=&\Phi_{s}+\sum_{n=1}^{\infty}\delta_{s}^{n}\Phi(s,x,f(x))\frac{t^{n}}{n!}\\
&=&\sum_{n=0}^{\infty}\delta_{s}^{n}\Phi(s,x,f(x))\frac{t^{n}}{n!}\\
&=&\Phi(s+t,x,f(x)).
\end{eqnarray*}
Then $\Phi(t,x,f(x))$ is a flow as stated.
\end{proof}

Clearly $\{\Phi_{t}:t\in R\}$, with $\Phi_{t}=\Phi(t,x,f(x))$, is a group acting on $\hurw_{S}[[t]]$. Let us define two derivatives on $\hurw_{S}[[t]]$, $\delta_{t}$ acting on the variable $t$ and $\delta_{x}=\delta$ acting on the variable $x$. We will use the Lemma \ref{lemma_delta_opa} to show a beautiful relation between $\delta_{t}Phi$ and $\delta_{x}\Phi$.

\begin{theorem}
Take $f(x)\in\hurw_{R}[[x]]$. Then
\begin{equation}
f(x)\delta_{x}\Phi(t,x,f(x))=\delta_{t}\Phi(t,x,f(x))=f(\Phi)
\end{equation}
\end{theorem}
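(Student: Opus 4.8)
The plan is to prove the two equalities in the statement by a direct computation on the defining series of $\Phi$, using Lemma \ref{lemma_delta_opa} (the recursion $A_{n+1}(f)=f\,\delta A_{n}(f)$) as the engine. Write $\Phi=\Phi(t,x,f(x))=x+\sum_{n\geq1}A_{n}(f(x))\frac{t^{n}}{n!}$, where the $A_{n}$ are polynomials in $f,\delta f,\ldots,\delta^{n-1}f$ and hence are honest elements of $\hurw_{R}[[x]]$, so both $\delta_{t}$ and $\delta_{x}=\delta$ act on $\Phi$ term by term.

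First I would compute $\delta_{t}\Phi$. Since $\delta_{t}$ acts only on the powers of $t$, we get $\delta_{t}\Phi=\sum_{n\geq1}A_{n}(f(x))\frac{t^{n-1}}{(n-1)!}=\sum_{m\geq0}A_{m+1}(f(x))\frac{t^{m}}{m!}$. Now apply Lemma \ref{lemma_delta_opa}: for $m\geq1$, $A_{m+1}(f)=f\,\delta A_{m}(f)$, and for $m=0$ we have $A_{1}(f)=f=f\,\delta(x)=f\,\delta A_{0}(f)$ since $A_{0}(f)=x$. Hence every term is $f(x)\,\delta\!\left(A_{m}(f(x))\right)\frac{t^{m}}{m!}$, and factoring out $f(x)$ (which does not depend on $t$) and pulling $\delta=\delta_{x}$ out of the sum gives
\begin{equation*}
\delta_{t}\Phi=f(x)\,\delta_{x}\!\left(x+\sum_{m\geq1}A_{m}(f(x))\frac{t^{m}}{m!}\right)=f(x)\,\delta_{x}\Phi,
\end{equation*}
which is the first equality. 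For the second equality, $\delta_{t}\Phi=f(\Phi)$, I would instead use property 2) of the flow proved in the previous theorem (or, equivalently, the fact that $\Phi$ solves the IVP (\ref{autonoma}) in the variable $t$): differentiating $\Phi(s+t,x,f)=\Phi(t,\Phi(s,x,f),f)$ with respect to $t$ and setting $t=0$ gives $\delta_{s}\Phi(s,x,f)=\Phi'(0,\Phi_{s},f)$; but from (\ref{flujo_diferencial}), $\delta_{t}\Phi(t,y,f)\big|_{t=0}=A_{1}(f(y))=f(y)$, so with $y=\Phi_{s}$ we obtain $\delta_{s}\Phi(s,x,f)=f(\Phi(s,x,f))$. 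Renaming $s$ as $t$ finishes it. Alternatively one can argue directly: by Faà di Bruno / the definition of the $A_{n}$ as the Taylor coefficients of the solution, $\sum_{n\geq1}A_{n}(f(x))\frac{t^{n}}{n!}$ is by construction the expansion of the flow, so $\delta_{t}\Phi=f(\Phi)$ holds as a formal identity in $\hurw_{S}[[t]]$; I would phrase this via the substitution $f(\Phi_{s})=\sum_{n\geq0}\delta_{s}^{n}\Phi\,\frac{(\cdot)}{\cdots}$ already used in the proof of the flow property.

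The only real subtlety — the step I expect to be the main obstacle — is making rigorous the manipulation "$f(\Phi)=\sum A_{n}(f(\Phi))\ldots$" and the term-by-term action of the derivations, i.e. checking that composing $f$ with the power series $\Phi\in\hurw_{S}[[t]]$ is well-defined (the constant term of $\Phi$ in $t$ is $x\in S$, not $0$, so one is expanding $f$ around $x$, which is exactly the Hurwitz/Bell-polynomial setup of equation (1)) and that $\delta_{t}$ commutes with the infinite sum in this formal setting. Since $R$ has characteristic zero and $\hurw_{S}[[t]]$ is a genuine formal power series ring over $S=\hurw_{R}[[x]]$, these are legitimate formal operations, and I would simply remark that the first equality is purely formal (shift of index plus Lemma \ref{lemma_delta_opa}) while the second records that this common value equals $f$ evaluated along the trajectory, which is the content already established when $\Phi$ was shown to be a flow.
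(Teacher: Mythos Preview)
Your argument for $f(x)\,\delta_{x}\Phi=\delta_{t}\Phi$ is exactly the paper's: term-by-term differentiation of the defining series together with Lemma~\ref{lemma_delta_opa}, just run in the reverse direction (the paper starts from $f(x)\,\delta_{x}\Phi$ and arrives at $\sum_{n\geq0}A_{n+1}(f)\,t^{n}/n!=\delta_{t}\Phi$). For the remaining equality $\delta_{t}\Phi=f(\Phi)$ you actually supply more than the paper does---the paper's proof stops after the first equality and never justifies this part---and your derivation via differentiating the flow identity $\Phi(s+t,x,f)=\Phi(t,\Phi_{s},f)$ at $t=0$ is correct and cleanly reuses the previous theorem.
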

\begin{proof}
From (\ref{flujo_diferencial}) and the Lemma \ref{lemma_delta_opa} we have
\begin{eqnarray*}
f(x)\delta_{x}\Phi(t,x,f(x))&=&f(x)\delta_{x}\left(x+\sum_{n=1}^{\infty}A_{n}(f(x))\dfrac{t^{n}}{n!}\right)\\
&=&f(x)+\sum_{n=1}^{\infty}f(x)\delta_{x}\left(A_{n}(f(x))\right)\dfrac{t^{n}}{n!}\\
&=&A_{1}([f(x)])+\sum_{n=1}^{\infty}A_{n+1}(f(x))\dfrac{t^{n}}{n!}\\
&=&\sum_{n=0}^{\infty}A_{n+1}(f(x))\dfrac{t^{n}}{n!}\\
&=&\delta_{t}\Phi(t,x,f(x)).
\end{eqnarray*}
\end{proof}

Next we will prove that the flows of $\delta_{t}\Phi=f(\Phi)$ and of $\delta_{t}Phi=af(\Phi)$,
with $a$in $R$, have matching trajectories. That is, 

\begin{theorem}
For all $a$in $R$ it is fulfilled that $\Phi(t,x,af(x))=\Phi(at,x,f(x))$
\end{theorem}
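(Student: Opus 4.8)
The plan is to exploit Proposition \ref{prop_mult_esc}, which tells us that $\exp(a)\cdot\Opa(f)=\Opa(af)$, i.e. that $A_n(af)=a^nA_n(f)$ for all $n$. Combining this with the definition \eqref{flujo_diferencial} of the flow, I would compute $\Phi(t,x,af(x))$ directly by substituting $A_n(af(x))=a^nA_n(f(x))$ into the defining series. This immediately yields
\begin{equation*}
\Phi(t,x,af(x))=x+\sum_{n=1}^{\infty}A_{n}(af(x))\dfrac{t^{n}}{n!}=x+\sum_{n=1}^{\infty}a^{n}A_{n}(f(x))\dfrac{t^{n}}{n!}=x+\sum_{n=1}^{\infty}A_{n}(f(x))\dfrac{(at)^{n}}{n!},
\end{equation*}
and the last expression is precisely $\Phi(at,x,f(x))$ by the same definition \eqref{flujo_diferencial} read with $at$ in place of $t$. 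This settles the case $a\neq0$; for $a=0$ both sides equal $x$ (using $\Phi(0,x,f(x))=x$ from the previous theorem and $\Opa(0f)=\Opa(0)$), so the identity holds for all $a\in R$.

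The key steps, in order, are: first invoke Proposition \ref{prop_mult_esc} to get the scaling law $A_n(af)=a^nA_n(f)$; second, write out the series for $\Phi(t,x,af(x))$ term by term; third, absorb the factor $a^n$ into $t^n$ to recognize the reindexed series as $\Phi(at,x,f(x))$; fourth, dispose of the $a=0$ degenerate case separately. No induction is needed here because the hard inductive work was already done in Proposition \ref{prop_mult_esc}.

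Honestly, there is no serious obstacle: the statement is essentially a restatement of the homogeneity of the autonomous polynomials already established. The only point requiring a modicum of care is the manipulation of the formal power series in $t$ — one must be sure that reindexing $a^nt^n=(at)^n$ is legitimate termwise in $\hurw_{S}[[t]]$, which it is since these are formal series and the rearrangement is done coefficient by coefficient with no convergence issue. If one wanted a slicker phrasing, one could note that the map $t\mapsto at$ induces a ring endomorphism of $\hurw_{S}[[t]]$ and that the claimed identity is just the commutation of this endomorphism with the flow construction, but the direct series computation is shortest and self-contained.
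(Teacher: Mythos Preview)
Your proposal is correct and follows essentially the same approach as the paper: invoke Proposition~\ref{prop_mult_esc} to obtain $A_n(af)=a^nA_n(f)$, substitute into the series definition of $\Phi$, absorb $a^n$ into $(at)^n$, and treat $a=0$ separately. The paper's proof is identical in structure and brevity.
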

\begin{proof}
From the Proposition \ref{prop_mult_esc} we know that $\Opa(af(x))=\exp(a)\Opa(f(x))$ for all 
$a\in R$. For $a\neq0$ we have
\begin{eqnarray*}
\Phi(t,x,af(x))&=&x+\sum_{n=1}^{\infty}a^{n}A_{n}(f(x))\frac{t^{n}}{n!}\\
&=&x+\sum_{n=1}^{\infty}A_{n}(f(x))\frac{(at)^{n}}{n!}\\
&=&\Phi(at,x,f(x)).
\end{eqnarray*}
When $a=0$, $\Phi(t,x,0)=\Phi(0,x,f(x))=x$.
\end{proof}

The above theorem means that $\rho_{t}$ maps the class $\exp(R)\Opa(f)$ to the flow
$\Phi(t,x,f)$ for all $t\in R$. This is so because
$$\Phi(t,x,Rf(x))=\Phi(tR,x,f(x))=\Phi(R,x,f(x))$$
where $Rf(x)=\{af(x):a\in R\}$ is the set of scalar multiples of $f$.

When $I$ is a ideal in $R$, we have
$$\Phi(I,x,Rf(x))=\Phi(RI,x,f(x))=\Phi(I,x,f(x))$$

Now fix an $x$ in $R$ and $f\in\hurw_{R}[[x]]$. Define the homomorphism of groups 
$\sigma:R\rightarrow\Phi_{R}(x,f)$ by $\sigma(a)=\Phi_{a}(x,f)$, where we make $\Phi_{t}(x,f)\equiv\Phi(t,x,f)$. We want to extend the homomorphism $\sigma$ to a homomorphism of $R$-modules by showing that $\Phi_{R}(x,f)$ has precisely $R$-module structure.
By the group property of $\Phi_{R}(x,f)$, for all $n\in\N$  we have
$$\Phi_{nt}(x,f)=\Phi_{t}(x,f)\circ\Phi_{t}(x,f)\circ\cdots\circ\Phi_{t}(x,f)$$
Then we compose the flow $\Phi_{t}(x,f)$ with itself $n$ times and we can define 
$$n\star\Phi_{t}(x,f)=\Phi_{t}(x,f)\circ\Phi_{t}(x,f)\circ\cdots\circ\Phi_{t}(x,f).$$

In this way we can define the product $\star:R\times\Phi_{R}(x,f)\rightarrow\Phi_{R}(x,f)$ by
$$a\star\Phi_{t}(x,f)=\Phi_{at}(x,f)$$ 
for all $a\in R$ and it is very easy to observe that with the product $\star$ the subgroup $\Phi_{R}(x,f)$ adopts $R$-module structure and $\sigma$ becomes a homomorphism of $R$-module, since the ring $R$ is an $R$-module in itself. Now we want to understand under which conditions of the fixed element $x$ the $\sigma$-homomorphism becomes an isomorphism of $R$-module isomorphism.

Now fix a polynomial $f(x)\in R[x]$ and extend the map $\overline{\epsilon_{a}}$ to the flow $\Phi_{t}(x,f)$ by defining
\begin{equation}
\overline{\epsilon_{a}}\Phi_{t}(x,f)=\overline{\epsilon_{a}}\left(x+\sum_{n=1}^{\infty}A_{n}(f(x))\frac{t^{n}}{n!}\right)=a+\sum_{n=1}^{\infty}A_{n}(f(a))\frac{t^{n}}{n!}
\end{equation}

and let
\begin{equation}
\Gamma_{a}:=\{\overline{\epsilon_{a}}\Phi_{t}(x,f):t\in R\}
\end{equation}
be the orbit or trajectory of $a$. 

Then $\Gamma_{0}$ is the exponential generating function of the sequence 
$\overline{\epsilon_{0}}\Opa(f(x))$. If $\Gamma_{x_{0}}=\{x_{0}\}$, then $x_{0}$ is an equilibrium
point for $\Phi_{R}(x,f)$. Equilibrium points are obtained
when $A_{n}(f(x_{0}))=0$ for $n\geq1$, i.e., when $f(x_{0})=0$ for some $x_{0}\in R$. If $x_{0}$ 
is not an equilibrium point, then it will be called a regular point of $x_{0}$ regular point of 
$\Phi_{R}(x,f)$. 

Denote
\begin{eqnarray*}
\ker(\sigma)&=&\{t\in R:\sigma(t)=x\}\\
&=&\{t\in R:\Phi_{t}(x,f)=x\}
\end{eqnarray*} 
the kernel of $\sigma$. If $\overline{x}$ is an equilibrium point, then
$\sigma:R$ and $\ker(\sigma)=R$. If $x$ is a regular point, $\ker(\sigma)=R$ and $\sigma$ is injective. Since $\sigma$ is overjective, then $\sigma$ is an isomorphism of $R$-modules. We will say that the module $\Phi_{R}(x,f)$ is a trivial module if $x$ is an equilibrium point. Otherwise it would be called a nontrivial module.

Let $I$ be an ideal of $R$ and $x$ be a regular point. Then $\sigma(I)=\Phi_{I}(x,f)$ is a submodule in $\Phi_{R}(x,f)$. Then we can establish the following correspondence
\begin{eqnarray}
\{\text{Ideals $I$ in }R\}\Leftrightarrow\{\text{Submodules }\Phi_{I} \text{ of }\Phi_{R}(x,f)\}\Leftrightarrow\{y^{\prime}=If(y)\}
\end{eqnarray}
where $\{y^{\prime}=If(y)\}$ denotes the set of all autonomous differential equations $y^{\prime}=af(y)$, with $a\in I$.

We end this section with the following results
\begin{theorem}
A nontrivial $R$-module $\Phi_{R}(x,f)$ is a torsion-free cyclic module.
\end{theorem}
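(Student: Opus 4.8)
The plan is to reduce the whole statement to the isomorphism $\sigma$ that has already been produced in the paragraph preceding the theorem. Recall that "nontrivial" means the fixed base point $x$ is a regular point of $\Phi_{R}(x,f)$, and in that case the discussion above shows $\ker(\sigma)=\{0\}$, so that $\sigma\colon R\to\Phi_{R}(x,f)$ is an isomorphism of $R$-modules: it is surjective by the definition of $\Phi_{R}(x,f)$, injective because $x$ is regular, additive because $\Phi_{s+t}(x,f)=\Phi_{s}(x,f)\circ\Phi_{t}(x,f)$ (the flow property), and compatible with scalars because $a\star\Phi_{t}(x,f)=\Phi_{at}(x,f)=\sigma(at)$. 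Hence it suffices to observe that $R$, regarded as a module over itself, is torsion-free and cyclic, and then transport both properties across $\sigma$.

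For cyclicity: since $R$ has a unit, $R=R\cdot 1$, and applying $\sigma$ gives $\Phi_{R}(x,f)=\sigma(R)=\{\,a\star\sigma(1):a\in R\,\}=R\star\Phi_{1}(x,f)$, so the time-one map $\Phi_{1}(x,f)=\Phi(1,x,f)$ generates $\Phi_{R}(x,f)$ as an $R$-module; in particular $\Phi_{R}(x,f)$ is cyclic.

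For torsion-freeness: the zero element of the additive group $(\Phi_{R}(x,f),\circ)$ is the constant flow $\Phi_{0}(x,f)=x$. Suppose $a\in R$, $a\neq 0$, and $a\star\Phi_{t}(x,f)=\Phi_{0}(x,f)$ for some $t\in R$. Then $\Phi_{at}(x,f)=\Phi_{0}(x,f)$, so $at\in\ker(\sigma)=\{0\}$; since $R$ is an integral domain and $a\neq 0$, we get $t=0$ and hence $\Phi_{t}(x,f)=\Phi_{0}(x,f)$. Thus $\Phi_{R}(x,f)$ has no nonzero torsion element. (Equivalently, torsion-freeness is simply transported from $R$, which is torsion-free over itself precisely because it is a domain, through the module isomorphism $\sigma$.)

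The proof therefore has essentially no hard step: the real content was already discharged when $\sigma$ was shown to be an $R$-module isomorphism for regular $x$. The one point worth stating carefully is the identification $\ker(\sigma)=\{0\}$, i.e. that $\Phi_{t}(x,f)=x$ forces $t=0$ at a regular point — this is where the regularity hypothesis (so that $A_{1}(f(x))\neq 0$) together with the characteristic-zero and integral-domain assumptions on $R$ genuinely enter, and it is exactly the injectivity of $\sigma$ recorded above.
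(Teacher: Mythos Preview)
Your proof is correct and follows essentially the same route as the paper: both arguments reduce cyclicity and torsion-freeness to the $R$-module isomorphism $\sigma\colon R\to\Phi_{R}(x,f)$ established just before the theorem. The only cosmetic differences are that the paper takes an arbitrary unit $u$ (so $\Phi_{u}$) as generator where you take $\Phi_{1}$, and for torsion-freeness the paper detours through the structure description $\Phi_{R}(x,f)\cong R/I$ and annihilators, whereas you argue directly from $\ker(\sigma)=\{0\}$ together with $R$ being a domain; your version is the cleaner of the two, but the underlying idea is identical.
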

\begin{proof}
Suppose $x$ is a regular point. To show that $\Phi_{R}(x,f)$ is a cyclic module, it is sufficient 
to take a unit $u$ in $R$. Then $R\star\Phi_{u}(x,f)=\Phi_{uR}(x,f)=\Phi_{R}(x,f)$ and so 
$\Phi_{R}(x,f)$ is cyclic. Now we will show that $\Phi_{R}(x,f)$ is torsion free.
On the one hand, there exists an ideal $I\subset R$ such that $\Phi_{R}(x,f)$ is isomorphic to $R/I$.
Since it was already shown that $R$ and $\Phi_{R}(x,f)$ are isomorphic, then it follows that $I$ is
the zero ideal. On the other hand, $\Phi_{R}(x,f)$ is a cyclic $R$-module is equivalent to saying
that the multiplication homomorphism $\tau_{s}:R\rightarrow\Phi_{R}(x,f)$, 
$\tau_{s}(a)=a\star\Phi_{s}(x,f)$, is a homomorphism of overjective $R$-modules. Write 
$I=\ker(\tau_{s})$. By the first isomorphism theorem for module, $\overline{\tau_{s}}$ is 
an isomorphism from $R/I$ to $\Phi_{R}(x,f)$. Since $\overline{\tau_{s}}$ is the annihilator 
$\ann(\Phi_{s}(x,f))$ of $\Phi_{s}(x,f)$, then $\ann(\Phi_{s}(x, f))=I=0$ for any 
$\Phi_{s}(x,f)\in\Phi_{R}(x,f)$ and $0$ is the only torsion element in $\Phi_{R}(x,f)$.
\end{proof}

\section{Ring of one-dimensional flows}

Define the map $\rho_{t}:\Opa(\hurw_{R}[[x]])\rightarrow\hurw_{S}[[t]]$, where $S=\hurw_{R}[[x]]$, as $\rho_{t}\Opa(f(x))=x+\sum_{n=1}^{\infty}A_{n}(f(x))\frac{t^{n}}{n!}$. Then
\begin{eqnarray}
\Phi(t,x,f(x))&=&\rho_{t}\Opa(f(x)).
\end{eqnarray}

Then it is possible to extend the operations of the ring $\Opa(\hurw_{R}[[x]])$ to the set 
$\rho_{t}\Opa(\hurw_{R}[[x]])$. We have
\begin{definition}
Take $f(x)$ and $g(x)$ from $\hurw_{R}[[x]]$. We define the sum $\boxplus$ and the product
$\odot$ in $\rho_{t}\Opa(\hurw_{R}[[x]])$ as follows
\begin{eqnarray}
\rho_{t}\Opa(f)\boxplus\rho_{t}\Opa(g)&=&\rho_{t}[\Opa(f)\boxplus\Opa(g)]\\
\rho_{t}\Opa(f)\odot\rho_{t}\Opa(g)&=&\rho_{t}[\Opa(f)\odot\Opa(g)]
\end{eqnarray}
\end{definition}

Then we have the following result
\begin{theorem}
The set $\rho_{t}\Opa(\hurw_{R}[[x]])$ with sum $\boxplus$ and product $\odot$ is a commutative ring with units $\rho_{t}\Opa(0)=x$ and $\rho_{t}\Opa(1)=x+t$. The ring $\rho_{t}\Opa(\hurw_{R}[[x]])$ will be called \textbf{ring of one-dimensional flows}.
\end{theorem}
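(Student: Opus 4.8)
The plan is to exploit the fact that $\boxplus$ and $\odot$ on $\rho_{t}\Opa(\hurw_{R}[[x]])$ were defined precisely so that $\rho_{t}$ carries the ring structure of $\Opa(\hurw_{R}[[x]])$ over to its image. Since $(\Opa(\hurw_{R}[[x]]),\boxplus,\odot)$ has already been shown to be a commutative ring, the theorem will follow once I check that $\rho_{t}$ is injective on $\Opa(\hurw_{R}[[x]])$ --- which legitimizes the definition of the two operations on the image and makes $\rho_{t}$ a ring isomorphism onto it --- and then identify the neutral elements.

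First I would establish injectivity. From $\rho_{t}\Opa(f(x)) = x + \sum_{n\ge 1} A_{n}(f(x))\,t^{n}/n!$, the coefficient of $t^{1}$ equals $A_{1}(f(x)) = f(x)$, so $\rho_{t}\Opa(f) = \rho_{t}\Opa(g)$ forces $f = g$ and hence $\Opa(f) = \Opa(g)$. This single observation is the crux: it shows each element of $\rho_{t}\Opa(\hurw_{R}[[x]])$ has a unique $\Opa$-preimage, so the prescriptions $\rho_{t}\Opa(f)\boxplus\rho_{t}\Opa(g) := \rho_{t}[\Opa(f)\boxplus\Opa(g)]$ and $\rho_{t}\Opa(f)\odot\rho_{t}\Opa(g) := \rho_{t}[\Opa(f)\odot\Opa(g)]$ are independent of the chosen representatives and land inside $\rho_{t}\Opa(\hurw_{R}[[x]])$, i.e. the image is closed under both operations. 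Consequently $\rho_{t}\colon\Opa(\hurw_{R}[[x]])\to\rho_{t}\Opa(\hurw_{R}[[x]])$ is a bijection that intertwines $(\boxplus,\odot)$ with $(\boxplus,\odot)$ by construction, hence a ring isomorphism; commutativity, associativity and distributivity therefore descend from $\Opa(\hurw_{R}[[x]])$. Equivalently, composing with the isomorphism $f\mapsto\Opa(f)$ and using Lemmas \ref{lemma_ope_+} and \ref{lemma_ope_prod} (so that $\Opa(f)\boxplus\Opa(g)=\Opa(f+g)$ and $\Opa(f)\odot\Opa(g)=\Opa(fg)$), the composite $f\mapsto\rho_{t}\Opa(f)=\Phi(t,x,f)$ is a ring isomorphism of $(\hurw_{R}[[x]],+,\cdot)$ onto $\rho_{t}\Opa(\hurw_{R}[[x]])$.

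It then remains to compute the images of $0$ and $1$, which must be the additive and multiplicative identities since the map is a ring isomorphism and $0,1$ are the identities of $\hurw_{R}[[x]]$. From $A_{1}(0)=0$ and the recursion (or Lemma \ref{lemma_delta_opa}) $A_{n+1}(0)=0\cdot\delta A_{n}(0)=0$, every $A_{n}(0)$ with $n\ge 1$ vanishes, so $\rho_{t}\Opa(0)=x$. For the constant series $1$, where $\delta(1)=0$, Lemma \ref{lemma_delta_opa} gives $A_{1}(1)=1$ and $A_{n+1}(1)=1\cdot\delta A_{n}(1)=0$ for $n\ge 1$, whence $\rho_{t}\Opa(1)=x+t$. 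I do not anticipate any obstacle beyond the injectivity verification, which, although short, is what makes the transported operations well defined and hence carries the whole argument.
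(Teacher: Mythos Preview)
Your proof is correct and follows essentially the same approach as the paper: the ring structure is transported from $\Opa(\hurw_{R}[[x]])$ via $\rho_{t}$, and the neutral elements are computed directly from $\textbf{0}_{\Opa}$ and $\textbf{1}_{\Opa}$. The paper's proof is a two-line remark that the properties are ``inherited by definition,'' whereas you make explicit the injectivity of $\rho_{t}$ (via the $t$-coefficient $A_{1}(f)=f$) that justifies the well-definedness of the transported operations; this is a point the paper leaves implicit and only mentions afterward when asserting that $\rho_{t}$ is a ring isomorphism.
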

\begin{proof}
By the definition above the set $\rho_{t}\Opa(\hurw_{R}[[x]])$ inherits the ring properties of 
$\Opa(\hurw_{R}[[x]])$. On the other hand, $\rho_{t}\Opa(0)=\rho_{t}\textbf{0}_{\Opa}=x$ and $
\rho_{t}\Opa(1)=\rho_{t}\textbf{1}_{\Opa}=x+t$.
\end{proof}

Clearly $\rho_{t}$ is a ring isomorphism and therefore $\rho_{t}\Opa(\hurw_{R}[[x]])$ inherits all the algebraic properties of $\Opa(\hurw_{R}[[x]])$ from Section 3.
\begin{theorem}\label{theo_delta_domi_flujo}
$\rho_{t}\Opa(\hurw_{R}[[x]])$ is an integral domain.
\end{theorem}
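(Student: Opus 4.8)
The plan is to exploit the fact, just established in the excerpt, that $\rho_{t}$ is a ring isomorphism from $\Opa(\hurw_{R}[[x]])$ onto $\rho_{t}\Opa(\hurw_{R}[[x]])$. Since $\Theorem~\ref{theo_delta_domi}$ already tells us that $\Opa(\hurw_{R}[[x]])$ is an integral domain (using that $R$, hence $\hurw_{R}[[x]]$, is a domain), and the class of integral domains is preserved under ring isomorphism, the result is essentially immediate. So the proof is really a one-line transport-of-structure argument, and the only thing worth spelling out is why $\rho_{t}$ is injective, so that a product being zero downstairs forces a product being zero upstairs.

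Concretely, I would argue as follows. Take $\rho_{t}\Opa(f)$ and $\rho_{t}\Opa(g)$ in $\rho_{t}\Opa(\hurw_{R}[[x]])$ and suppose their product is the zero element $\rho_{t}\Opa(0)=x$. By the definition of $\odot$ on $\rho_{t}\Opa(\hurw_{R}[[x]])$ we have
\begin{equation*}
\rho_{t}\Opa(f)\odot\rho_{t}\Opa(g)=\rho_{t}[\Opa(f)\odot\Opa(g)]=\rho_{t}\Opa(fg),
\end{equation*}
where the last equality is Lemma~\ref{lemma_ope_prod}. Since $\rho_{t}$ is injective (its inverse is $\lambda$ composed with evaluation, or more simply: $\rho_{t}\Opa(h)=x+\sum_{n\ge1}A_{n}(h)t^{n}/n!$ determines $A_{1}(h)=h$ as the coefficient of $t$, so $\rho_{t}\Opa(h)=x$ forces $h=0$), the equation $\rho_{t}\Opa(fg)=x=\rho_{t}\Opa(0)$ yields $fg=0$ in $\hurw_{R}[[x]]$. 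Because $\hurw_{R}[[x]]$ is an integral domain when $R$ is, either $f=0$ or $g=0$, hence either $\rho_{t}\Opa(f)=x$ or $\rho_{t}\Opa(g)=x$; that is, either $\rho_{t}\Opa(f)$ or $\rho_{t}\Opa(g)$ is the zero of $\rho_{t}\Opa(\hurw_{R}[[x]])$.

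There is no real obstacle here; the statement is a corollary of Theorem~\ref{theo_delta_domi} together with the fact that $\rho_{t}$ is an isomorphism. The only point requiring a word of care is that one should verify $\rho_{t}\Opa(0)$ is genuinely the additive identity and not merely some distinguished element — but this was recorded in the previous theorem as $\rho_{t}\Opa(0)=x$, so it can simply be cited. I would therefore keep the proof to three or four lines, invoking Lemma~\ref{lemma_ope_prod}, the injectivity of $\rho_{t}$, and the domain property of $\hurw_{R}[[x]]$, exactly paralleling the proof of Theorem~\ref{theo_delta_domi}.
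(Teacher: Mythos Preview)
Your proposal is correct and follows the same approach as the paper: the paper states this theorem without a written proof, relying on the sentence immediately preceding it (``Clearly $\rho_{t}$ is a ring isomorphism and therefore $\rho_{t}\Opa(\hurw_{R}[[x]])$ inherits all the algebraic properties of $\Opa(\hurw_{R}[[x]])$ from Section~3''), which is exactly the transport-of-structure argument you spell out. Your version is simply a more detailed unpacking of that one-line justification, invoking Lemma~\ref{lemma_ope_prod} and the injectivity of $\rho_{t}$ explicitly rather than leaving them implicit.
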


Let $\chi:\rho_{t}\Opa(\hurw_{R}[[x]])\rightarrow\hurw_{R}[[x]]$ be an homomorphism defined by
$\chi(\rho_{t}\Opa(f(x)))=f(x)$. We have the proposition

\begin{proposition}\label{propo_iso_flujo}
Let $I$ be an ideal in $R$. Then
\begin{enumerate}
\item $\rho_{t}\Opa(\hurw_{R}[[x]])/\chi^{-1}(I+\left\langle x\right\rangle)\simeq\hurw_{R}[[x]]/(I+\left\langle x\right\rangle)\simeq R/I$.
\item $\rho_{t}\Opa(\hurw_{R}[[x]])/\rho_{t}\Opa(\hurw_{I}[[x]])\simeq\rho_{t}\Opa(\hurw_{R}[[x]]/\hurw_{I}[[x]])$.
\end{enumerate}
\end{proposition}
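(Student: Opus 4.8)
The plan is to deduce this proposition from Proposition \ref{propo_iso} by transporting everything along the ring isomorphism $\rho_{t}$, using that $\rho_{t}:\Opa(\hurw_{R}[[x]])\rightarrow\rho_{t}\Opa(\hurw_{R}[[x]])$ was shown above to be an isomorphism of rings. The one preliminary point I would record is that the homomorphism $\chi$ introduced in this section satisfies $\chi=\chi_{0}\circ\rho_{t}^{-1}$, where $\chi_{0}:\Opa(\hurw_{R}[[x]])\rightarrow\hurw_{R}[[x]]$, $\chi_{0}(\Opa(f))=f$, is the homomorphism from Section 3; hence $\chi^{-1}(J)=\rho_{t}(\chi_{0}^{-1}(J))$ for every ideal $J$ of $\hurw_{R}[[x]]$, so that $\rho_{t}$ restricts to an isomorphism of ideals $\chi_{0}^{-1}(J)\simeq\chi^{-1}(J)$.

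For part (1) I would set $J=I+\left\langle x\right\rangle$, let $\tau:\hurw_{R}[[x]]\rightarrow\hurw_{R}[[x]]/J$ be the canonical surjection, and put $\psi=\tau\circ\chi$. Since $\chi$ is a surjective ring homomorphism (a composite of the surjections $\rho_{t}^{-1}$ and $\chi_{0}$) and $\tau$ is surjective, $\psi$ is a surjective ring homomorphism with $\ker(\psi)=\chi^{-1}(J)$; the first isomorphism theorem then gives $\rho_{t}\Opa(\hurw_{R}[[x]])/\chi^{-1}(J)\simeq\hurw_{R}[[x]]/J$. The remaining identification $\hurw_{R}[[x]]/(I+\left\langle x\right\rangle)\simeq R/I$ is the same known fact already cited in the proof of Proposition \ref{propo_iso}.

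For part (2) I would use $\hurw_{R}[[x]]/\hurw_{I}[[x]]\simeq\hurw_{R/I}[[x]]$, which gives $\rho_{t}\Opa(\hurw_{R}[[x]]/\hurw_{I}[[x]])\simeq\rho_{t}\Opa(\hurw_{R/I}[[x]])$, and then define $\sigma:\rho_{t}\Opa(\hurw_{R}[[x]])\rightarrow\rho_{t}\Opa(\hurw_{R/I}[[x]])$ by $\sigma(\rho_{t}\Opa(f))=\rho_{t}\Opa(\overline{f})$, where $\overline{f(x)}=\sum_{n=0}^{\infty}\overline{a}_{n}\frac{x^{n}}{n!}$ is the coefficientwise reduction modulo $I$. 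Since coefficientwise reduction is a surjective ring homomorphism $\hurw_{R}[[x]]\rightarrow\hurw_{R/I}[[x]]$ and $\Opa$ and $\rho_{t}$ are ring homomorphisms, $\sigma$ is a well-defined surjective ring homomorphism, and $\ker(\sigma)=\{\rho_{t}\Opa(f):a_{n}\in I\text{ for all }n\}=\rho_{t}\Opa(\hurw_{I}[[x]])$. The first isomorphism theorem then yields the stated isomorphism.

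I do not expect a genuine obstacle here: once $\rho_{t}$ is known to be a ring isomorphism, parts (1) and (2) are literally Proposition \ref{propo_iso} pushed forward along $\rho_{t}$, and the whole proof is bookkeeping. The only step that needs a word of care is the identification $\chi=\chi_{0}\circ\rho_{t}^{-1}$, because without it the symbol $\chi^{-1}(I+\left\langle x\right\rangle)$ in the statement — and its relation to the corresponding object in Proposition \ref{propo_iso} — is not a priori meaningful.
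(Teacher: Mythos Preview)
Your proposal is correct and matches the paper's approach exactly: the paper gives no separate proof of this proposition, merely remarking just before it that ``$\rho_{t}$ is a ring isomorphism and therefore $\rho_{t}\Opa(\hurw_{R}[[x]])$ inherits all the algebraic properties of $\Opa(\hurw_{R}[[x]])$ from Section~3.'' Your argument is precisely this transport along $\rho_{t}$, spelled out in more detail than the paper itself provides; in particular, your observation that the $\chi$ of this section is $\chi_{0}\circ\rho_{t}^{-1}$ (with $\chi_{0}$ the $\chi$ of Section~3) is exactly the bookkeeping needed to make the inheritance precise.
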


\begin{theorem}\label{theo_maxi_flujo}
If $M_{\alpha}$ is the set of maximal ideals of $\hurw_{R}[[x]]$, then $\rho_{t}\Opa(M_{\alpha})$ 
is the set of maximal ideals of $\rho_{t}\Opa(\hurw_{R}[[x]])$.
\end{theorem}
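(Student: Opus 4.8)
The plan is to deduce this immediately from Theorem~\ref{theo_maxi} together with the fact, recorded just above, that $\rho_{t}:\Opa(\hurw_{R}[[x]])\rightarrow\rho_{t}\Opa(\hurw_{R}[[x]])$ is a ring isomorphism. First I would make the isomorphism claim precise: $\rho_{t}$ is surjective by construction of the target, it respects $\boxplus$ and $\odot$ by the very definition of those operations on $\rho_{t}\Opa(\hurw_{R}[[x]])$, and it is injective because $A_{1}(f(x))=f(x)$ is recoverable as the coefficient of $t$ in $\rho_{t}\Opa(f(x))$, so $\rho_{t}\Opa(f)=\rho_{t}\Opa(g)$ forces $f=g$ and hence $\Opa(f)=\Opa(g)$.

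Granting that, the argument is purely formal: a ring isomorphism induces an inclusion-preserving bijection on ideals that carries maximal ideals to maximal ideals. Therefore the maximal ideals of $\rho_{t}\Opa(\hurw_{R}[[x]])$ are exactly the images under $\rho_{t}$ of the maximal ideals of $\Opa(\hurw_{R}[[x]])$. By Theorem~\ref{theo_maxi} the latter are precisely the ideals $\Opa(M)$ with $M$ ranging over the maximal ideals $M_{\alpha}$ of $\hurw_{R}[[x]]$, and $\rho_{t}(\Opa(M))=\rho_{t}\Opa(M)$; combining these gives the statement.

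If one prefers a self-contained proof mirroring that of Theorem~\ref{theo_maxi}, I would instead use the homomorphism $\chi:\rho_{t}\Opa(\hurw_{R}[[x]])\rightarrow\hurw_{R}[[x]]$, $\chi(\rho_{t}\Opa(f(x)))=f(x)$, and argue in both directions: for $M$ maximal in $\hurw_{R}[[x]]$, $\chi^{-1}(M)=\rho_{t}\Opa(M)$ is maximal since $\rho_{t}\Opa(\hurw_{R}[[x]])/\chi^{-1}(M)\simeq\hurw_{R}[[x]]/M$ is a field; conversely, given a maximal $\rho_{t}\Opa(M)$ and $\rho_{t}\Opa(h)$ outside it, choosing $\rho_{t}\Opa(f)\in\rho_{t}\Opa(M)$ and $\rho_{t}\Opa(g)$ with $\rho_{t}\Opa(1)=\rho_{t}\Opa(g)\odot\rho_{t}\Opa(h)\boxplus\rho_{t}\Opa(f)=\rho_{t}\Opa(gh+f)$ yields $1=gh+f\in(\chi(\rho_{t}\Opa(M)),h)$, so $\chi(\rho_{t}\Opa(M))$ is maximal in $\hurw_{R}[[x]]$.

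There is no real obstacle here; the only point requiring a line of care is the injectivity of $\rho_{t}$, i.e. that the flow coefficients $A_{n}(f(x))$ determine $\Opa(f)$ — immediate from $A_{1}(f(x))=f(x)$ — so that $\rho_{t}$ is a genuine isomorphism and the transport-of-maximal-ideals argument applies. Everything else is bookkeeping already carried out for $\Opa(\hurw_{R}[[x]])$ in Section~3.
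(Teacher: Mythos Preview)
Your approach is exactly the paper's: the paper does not give a separate proof of this theorem but simply records, just before stating it, that ``$\rho_{t}$ is a ring isomorphism and therefore $\rho_{t}\Opa(\hurw_{R}[[x]])$ inherits all the algebraic properties of $\Opa(\hurw_{R}[[x]])$ from Section~3,'' so the result is meant to follow immediately from Theorem~\ref{theo_maxi}. Your write-up is in fact more careful than the paper's, since you justify the injectivity of $\rho_{t}$ explicitly and also sketch the direct $\chi$-based argument mirroring the proof of Theorem~\ref{theo_maxi}.
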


\begin{theorem}\label{the_prime_flujo}
If $P+\left\langle x\right\rangle$ and $\hurw_{P}[[x]]$ are prime ideals in
$\hurw_{R}[[x]]$, then $\rho_{t}\Opa(P+\left\langle x\right\rangle)$ and
$\rho_{t}\Opa(\hurw_{P}[[x]])$ are prime ideals in $\rho_{t}\Opa(\hurw_{R}[[x]])$.
\end{theorem}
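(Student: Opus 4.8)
The plan is to reduce everything to the fact, recorded just before Theorem \ref{theo_delta_domi_flujo}, that $\rho_{t}$ is a ring isomorphism of $\Opa(\hurw_{R}[[x]])$ onto $\rho_{t}\Opa(\hurw_{R}[[x]])$, together with Theorem \ref{the_prime}. First I would note that, for an ideal $J$ of $\hurw_{R}[[x]]$, the symbol $\rho_{t}\Opa(J)$ means the image of $J$ under the composite map $\rho_{t}\circ\lambda\colon\hurw_{R}[[x]]\to\rho_{t}\Opa(\hurw_{R}[[x]])$, where $\lambda$ is the isomorphism from Section 3. Since $\lambda$ and $\rho_{t}$ are both ring isomorphisms, so is $\rho_{t}\circ\lambda$; hence $\rho_{t}\Opa(J)$ is an ideal whenever $J$ is one, and it is a proper ideal whenever $J$ is proper.

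Then I would invoke the standard fact that a ring isomorphism carries prime ideals to prime ideals. Applying this to $\rho_{t}\circ\lambda$ and to the prime ideals $P+\left\langle x\right\rangle$ and $\hurw_{P}[[x]]$ of $\hurw_{R}[[x]]$ immediately gives that $\rho_{t}\Opa(P+\left\langle x\right\rangle)$ and $\rho_{t}\Opa(\hurw_{P}[[x]])$ are prime ideals of $\rho_{t}\Opa(\hurw_{R}[[x]])$. Equivalently, one may first apply Theorem \ref{the_prime} to obtain that $\Opa(P+\left\langle x\right\rangle)$ and $\Opa(\hurw_{P}[[x]])$ are prime in $\Opa(\hurw_{R}[[x]])$, and then push them forward along the single isomorphism $\rho_{t}$. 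If a self-contained computation is preferred, I would mimic the proof of Theorem \ref{the_prime}: from $\rho_{t}\Opa(f)\odot\rho_{t}\Opa(g)=\rho_{t}\Opa(fg)\in\rho_{t}\Opa(P+\left\langle x\right\rangle)$ and the injectivity of $\rho_{t}\circ\lambda$ one deduces $fg\in P+\left\langle x\right\rangle$, so primality of $P+\left\langle x\right\rangle$ forces $f\in P+\left\langle x\right\rangle$ or $g\in P+\left\langle x\right\rangle$, whence $\rho_{t}\Opa(f)$ or $\rho_{t}\Opa(g)$ lies in $\rho_{t}\Opa(P+\left\langle x\right\rangle)$; the same argument with $\hurw_{P}[[x]]$ in place of $P+\left\langle x\right\rangle$ handles the second ideal.

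There is essentially no hard step here: the only point requiring care is the bookkeeping, namely confirming that $\rho_{t}\Opa(-)$ really is the image of a ring isomorphism, so that ``ideal'' and ``prime'' transfer and the relevant images stay proper. Everything else is transport of structure along an isomorphism, exactly parallel to the way Theorem \ref{theo_delta_domi_flujo}, Theorem \ref{theo_maxi_flujo}, and Proposition \ref{propo_iso_flujo} are obtained from their Section 3 counterparts Theorem \ref{theo_delta_domi}, Theorem \ref{theo_maxi}, and Proposition \ref{propo_iso}.
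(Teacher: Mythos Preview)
Your proposal is correct and matches the paper's approach exactly: the paper gives no separate proof of this theorem, relying instead on the sentence preceding Theorem~\ref{theo_delta_domi_flujo} that $\rho_{t}$ is a ring isomorphism, so that all the Section~3 results (in particular Theorem~\ref{the_prime}) transfer verbatim to $\rho_{t}\Opa(\hurw_{R}[[x]])$. Your write-up simply makes this transport of structure explicit.
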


\begin{theorem}\label{theo_campo_flujo}
Let $\F$ be a field. Then the set
$$\rho_{t}\Opa[\hurw_{F}((x))]=\rho_{t}\Opa(\hurw_{F}[[x]])/\left\langle\rho_{t}\Opa(x)\right\rangle$$ is a field.
\end{theorem}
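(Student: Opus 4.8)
The plan is to leverage the fact, already observed in the excerpt, that $\rho_{t}$ is a ring isomorphism from $(\Opa(\hurw_{\F}[[x]]),\boxplus,\odot)$ onto $(\rho_{t}\Opa(\hurw_{\F}[[x]]),\boxplus,\odot)$, together with Theorem \ref{theo_campo}, which already establishes that $\Opa[\hurw_{\F}((x))]=\Opa(\hurw_{\F}[[x]])/\langle\Opa(x)\rangle$ is a field. The whole argument is the transport of this fact across the isomorphism $\rho_{t}$.

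First I would record that a ring isomorphism carries the ideal generated by an element to the ideal generated by its image, so $\rho_{t}(\langle\Opa(x)\rangle)=\langle\rho_{t}\Opa(x)\rangle$; in particular $\langle\rho_{t}\Opa(x)\rangle$ is a maximal ideal of $\rho_{t}\Opa(\hurw_{\F}[[x]])$, which is also the content of Theorem \ref{theo_maxi_flujo} applied to the maximal ideal $\langle\Opa(x)\rangle$ of $\hurw_{\F}[[x]]$ via Theorem \ref{theo_ideal_princi}. Next I would note that $\rho_{t}$ descends to an isomorphism of quotient rings
\begin{equation}
\Opa(\hurw_{\F}[[x]])/\langle\Opa(x)\rangle\;\xrightarrow{\ \sim\ }\;\rho_{t}\Opa(\hurw_{\F}[[x]])/\langle\rho_{t}\Opa(x)\rangle,
\end{equation}
since $\rho_{t}$ is a bijective ring homomorphism taking the first ideal onto the second. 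Because the left-hand side is a field by Theorem \ref{theo_campo}, the right-hand side $\rho_{t}\Opa[\hurw_{\F}((x))]$ is a field as well.

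Alternatively, and perhaps cleaner for the paper's exposition, one can bypass Theorem \ref{theo_campo} and argue directly: since $\langle\rho_{t}\Opa(x)\rangle$ is maximal in $\rho_{t}\Opa(\hurw_{\F}[[x]])$ (by Theorem \ref{theo_maxi_flujo}), the quotient of the commutative ring $\rho_{t}\Opa(\hurw_{\F}[[x]])$ — which is an integral domain by Theorem \ref{theo_delta_domi_flujo} — by a maximal ideal is automatically a field. Either route reduces the statement to facts already in hand.

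There is essentially no serious obstacle here; the only point requiring a moment of care is checking that $\rho_{t}$ really does send $\langle\Opa(x)\rangle$ onto $\langle\rho_{t}\Opa(x)\rangle$ rather than merely into it, i.e., that the isomorphism respects the ideal-generation operation, and that $\langle\rho_{t}\Opa(x)\rangle$ as defined in the statement coincides with $\rho_{t}(\langle\Opa(x)\rangle)$. Both are immediate from $\rho_{t}$ being a surjective ring homomorphism, so the proof will be short.
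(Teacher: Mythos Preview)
Your proposal is correct and mirrors the paper's own treatment: the paper does not give a separate proof of this theorem but simply records, just before the block of results containing it, that ``$\rho_{t}$ is a ring isomorphism and therefore $\rho_{t}\Opa(\hurw_{R}[[x]])$ inherits all the algebraic properties of $\Opa(\hurw_{R}[[x]])$ from Section~3.'' Your argument via transport along $\rho_{t}$ (or, equivalently, your alternative via maximality of $\langle\rho_{t}\Opa(x)\rangle$) is exactly this inheritance spelled out.
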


The reason for constructing the ring $\rho_{t}\Opa(\hurw_{R}[[x]])$ is because it contains all the solutions to the autonomous differential equations $\delta_{t}\Phi=f(\Phi)$ for each function $f(x)$ in $\hurw_{R}[[x]]$. In this ring it will be possible to decompose the solutions 
of an autonomous differential equation of order one into simpler solutions.\\ 
First suppose that $f=f_{1}+f_{2}+\cdots+f_{k}$. We wish to solve the differential equation $\delta_{t}\Phi=f(\Phi)$.  The flow of this equation becomes
\begin{eqnarray*}
\Phi(t,x,f(x))&=&\rho_{t}\Opa(f(x))\\
&=&\rho_{t}\Opa\left(\sum_{i=1}^{k}f_{i}(x)\right)\\
&=&\rho_{t}\left(\bbox_{i=1}^{k}\Opa(f_{i}(x))\right)\\
&=&\bbox_{i=1}^{k}\rho_{t}\Opa(f_{i}(x))\\
&=&\bbox_{i=1}^{k}\Phi(t,x,f_{i}(x)).
\end{eqnarray*} 
Thus the flow of $\delta_{t}\Phi=f(\Phi)$ decomposes into summands where each summand is the flow of the equation $\delta_{t}\Phi=f_{i}(\Phi)$.

\begin{example}
Find the solution to the differential equation $y^{\prime}=e^{y}+sin(y)$, where $e^{ax}$ and $\sin(x)$ are in $\hurw_{\R}[[x]]$. We first calculate the flow of $y^{\prime}=e^{ay}$. We have
\begin{eqnarray*}
\rho_{t}\Opa(e^{ax})&=&\rho_{t}(x,e^{ax},ae^{2ax},2!a^{2}e^{3ax},...)\\
&=&x+\sum_{n=1}^{\infty}(n-1)!a^{n-1}e^{anx}\frac{t^{n}}{n!}\\
&=&x+\frac{1}{a}ln\left(\frac{1}{1-ate^{ax}}\right)
\end{eqnarray*}
In this way 
\begin{equation}
\Phi(t,x,e^{ax})=x+\frac{1}{a}ln\left(\frac{1}{1-ate^{ax}}\right)
\end{equation} 
is the flow we are looking for. Now we only have to use the previous result with $a=1,i,-i$. In this
way
\begin{small}
\begin{eqnarray*}
\Phi(t,x,e^{x}+\sin(x))&=&\Phi\left(t,x,e^{x}+\frac{1}{2i}e^{ix}-\frac{1}{2i}e^{-ix}\right)\\
&=&\Phi(t,x,e^{x})\boxplus\Phi(t,x,\frac{1}{2i}e^{ix})\boxplus\Phi(t,x,-\frac{1}{2i}e^{-ix})\\
&=&\left[x+ln\left(\frac{1}{1-te^{x}}\right)\right]\boxplus\left[x-(i)ln\left(\frac{2}{2-te^{ix}}\right)\right]\\
&&\boxplus\left[x+(i)ln\left(\frac{2}{2+te^{-ix}}\right)\right]
\end{eqnarray*}
\end{small}
is the flow of the equation $y^{\prime}=e^{y}+sin(y)$.
\end{example}

Now suppose that $f$ factorizes as $f=f_{1}f_{2}\cdots f_{k}$ in $\hurw_{R}[[x]]$ and let's find the solution to the equation $y^{\prime}=f(y)$. Then its flow will be 
\begin{eqnarray*}
\Phi(t,x,f(x))&=&\bcast_{i=1}^{k}\rho_{t}\Opa(f_{i})\\
&=&\bcast_{i=1}^{k}\Phi(t,x,f_{i}(x)).
\end{eqnarray*}
where each $\Phi(t,x,f_{i}(x))$ is the flow of the equation 
$y^{\prime}=f_{i}(y)$ associated with the dynamical system $(R,\hurw_{S}[[x]],\rho_{t}\Opa(f_{i}))$.

Now suppose $f(x)=\sum_{k=0}^{n}a_{k}x^{k}$ is a polynomial of degree $n$ in $R[x]$. We want to define a polynomial in the ring $\rho_{t}\Opa(R[x])$. Let us first define the monomials in $\rho_{t}\Opa(R[x])$ as
\begin{eqnarray}\label{eqn_pot_rho}
[\rho_{t}\Opa(x)]_{\odot}^{k}&=&\overbrace{\rho_{t}\Opa(x)\odot\cdots\odot\rho_{t}\Opa(x)}^{k}
\end{eqnarray}

and let's calculate its value
\begin{lemma}\label{lemma_potencia}
When $k=0$ we have that $[xe^{at}]_{\odot}^{0}=x+at$ and when $k=1$, clearly $[xe^{at}]_{\odot}^{1}=xe^{at}$. For $k\geq2$ we have
\begin{equation}
[xe^{at}]_{\odot}^{k}=\frac{x}{\sqrt[k-1]{1-a(k-1)x^{k-1}t}}
\end{equation}
\end{lemma}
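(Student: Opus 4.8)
The plan is first to unwind the notation. The element $[xe^{at}]_{\odot}^{k}$ is the $k$-th monomial term $\exp(a)\odot[\rho_{t}\Opa(x)]_{\odot}^{k}$ appearing in a polynomial of $\rho_{t}\Opa(R[x])$: applying Lemma~\ref{lemma_ope_prod} $k$ times gives $[\rho_{t}\Opa(x)]_{\odot}^{k}=\rho_{t}\Opa(x^{k})$, and $\exp(a)\cdot\Opa(x^{k})=\Opa(ax^{k})$ by Proposition~\ref{prop_mult_esc}, so (since $\rho_{t}$ is a ring isomorphism) $[xe^{at}]_{\odot}^{k}=\rho_{t}\Opa(ax^{k})=\Phi(t,x,ax^{k})$, i.e. the flow of $\delta_{t}\Phi=a\Phi^{k}$ with $\Phi|_{t=0}=x$. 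Hence everything reduces to evaluating $\rho_{t}\Opa(ax^{k})=x+\sum_{n\ge1}A_{n}(ax^{k})\frac{t^{n}}{n!}$. The boundary cases are immediate: for $k=0$ the vector field is the constant $a$, so $A_{1}(a)=a$ and $A_{n}(a)=0$ for $n\ge2$ because $\delta a=0$, giving $x+at$; for $k=1$, Corollary~\ref{coro_delta_aut} gives $A_{n}(x)=x$ for every $n$ (since $x\,\delta(x)=x$), so $A_{n}(ax)=a^{n}x$ by Proposition~\ref{prop_mult_esc} and $\rho_{t}\Opa(ax)=x+\sum_{n\ge1}a^{n}x\frac{t^{n}}{n!}=xe^{at}$.

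For $k\ge2$ the main step is a closed form for the autonomous polynomials of $x^{k}$. Writing $c_{n}=\prod_{j=1}^{n-1}\big(j(k-1)+1\big)$ and using the recursion $A_{n+1}(x^{k})=x^{k}\,\delta\!\big(A_{n}(x^{k})\big)$ of Lemma~\ref{lemma_delta_opa}, an induction on $n$ gives
\[
A_{n}(x^{k})=c_{n}\,x^{\,n(k-1)+1},
\]
since $x^{k}\cdot\delta\!\big(c_{n}x^{\,n(k-1)+1}\big)=c_{n}\big(n(k-1)+1\big)x^{\,(n+1)(k-1)+1}=c_{n+1}x^{\,(n+1)(k-1)+1}$. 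By Proposition~\ref{prop_mult_esc}, $A_{n}(ax^{k})=a^{n}A_{n}(x^{k})$.

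It remains to substitute and recognize the series. Put $m=k-1\ge1$. Factoring out $x$ and using $c_{n}=\prod_{j=0}^{n-1}(jm+1)$ (the $j=0$ factor being $1$),
\[
\rho_{t}\Opa(ax^{k})=x\sum_{n\ge0}\Big(\prod_{j=0}^{n-1}(jm+1)\Big)\frac{(ax^{m}t)^{n}}{n!}.
\]
Since $R$ has characteristic zero, the generalized binomial series is available, and $\prod_{j=0}^{n-1}(jm+1)\big/n!=(-1)^{n}m^{n}\binom{-1/m}{n}$, whence $\prod_{j=0}^{n-1}(jm+1)\,\frac{(ax^{m}t)^{n}}{n!}=\binom{-1/m}{n}(-amx^{m}t)^{n}$ as formal series in $t$; thus the sum equals $(1-amx^{m}t)^{-1/m}$. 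Therefore
\[
[xe^{at}]_{\odot}^{k}=x\big(1-a(k-1)x^{k-1}t\big)^{-1/(k-1)}=\frac{x}{\sqrt[k-1]{\,1-a(k-1)x^{k-1}t\,}},
\]
as claimed. I expect the only delicate point to be this last identification: rewriting the coefficient $\prod_{j=0}^{n-1}(jm+1)/n!$ through $\binom{-1/m}{n}$ and keeping the signs straight, so that the series is recognized as $(1-amx^{m}t)^{-1/m}$ rather than something with the wrong exponent or argument. (Alternatively one could verify directly that the stated closed form satisfies $\delta_{t}\Phi=a\Phi^{k}$ with $\Phi|_{t=0}=x$ and invoke uniqueness of the flow, but the series computation above is self-contained.)
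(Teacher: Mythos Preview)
Your proof is correct, but it follows a different route from the paper's. After the same first reduction $[xe^{at}]_{\odot}^{k}=\rho_{t}\Opa(ax^{k})=\Phi(t,x,ax^{k})$, the paper simply treats this as the flow of the ODE $y'=ay^{k}$ and, for $k\geq2$, invokes separation of variables to write down the solution $\frac{x}{\sqrt[k-1]{1-a(k-1)x^{k-1}t}}$ directly. You instead compute the autonomous polynomials in closed form via the recursion of Lemma~\ref{lemma_delta_opa}, obtaining $A_{n}(x^{k})=\big(\prod_{j=1}^{n-1}(j(k-1)+1)\big)x^{n(k-1)+1}$, and then sum the resulting exponential series by recognizing it as the generalized binomial series $(1-a(k-1)x^{k-1}t)^{-1/(k-1)}$. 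Your argument is longer but entirely self-contained at the level of formal power series over a characteristic-zero ring: it never appeals to calculus techniques like separation of variables, which sit somewhat awkwardly in the general ring setting the paper works in. The paper's argument is terser and more recognizable to an ODE reader; yours is more in the algebraic spirit of the surrounding machinery and gives the explicit coefficients $A_{n}(ax^{k})$ as a byproduct.
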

\begin{proof}
Por (\ref{eqn_pot_rho}), $[xe^{at}]_{\odot}^{k}=\rho_{t}\Opa(ax^{k})$. Then
we need to find the flow of the equation $y^{\prime}=ay^{k}$, for $k\geq0$. When $k=0$,
the equation $y^{\prime}=a$ has solution $\Phi(t,x,a)=x+at$. When $k=1$, the equation
$y^{prime}=ay$ has solution $\Phi(t,x,ax)=xe^{at}$. For $k\geq2$ we will use the method of 
separation of variables. Then the flow is
\begin{equation*}
\Phi(t,x,ax^{k})=\frac{x}{\sqrt[k-1]{1-a(k-1)x^{k-1}t}}.
\end{equation*}
\end{proof}

Then, by direct application of this Lemma, we have the following results
\begin{definition}
A polynomial of degree $n$ in $\rho_{t}\Opa(R[x])$ is of the form
\begin{equation}
\bbox_{k=0}^{n}[\rho_{t}\Opa(a_{k}x)]_{\odot}^{k}=(x+a_{0}t)\boxplus xe^{a_{1}t}\boxplus\bbox_{k=2}^{n}\frac{x}{\sqrt[k-1]{1-a_{k}(k-1)x^{k-1}t}}
\end{equation}
\end{definition}

Now let $\F$ be a field and let $f(x)$ be a polynomial of degree $n$ in $\F[x]\subset\hurw_{\F}[[x]]$. Further suppose $f(x)$ factorizes as $\prod_{i=1}^{n}(x-a_{i})$ in $\F[x]$. Then
\begin{eqnarray*}
\bbox_{k=0}^{n}[\rho_{t}\Opa(b_{k}x)]_{\odot}^{k}
&=&\rho_{t}\Opa\left(\prod_{i=1}^{n}(x-a_{i})\right)\\
&=&\rho_{t}\left(\bcast_{i=1}^{n}\Opa(x-a_{i})\right)\\
&=&\bcast_{i=1}^{n}\rho_{t}\Opa(x-a_{i})\\
&=&\bcast_{i=1}^{n}[(x-a_{i})e^{t}+a_{i}]
\end{eqnarray*}

\begin{theorem}
The irreducible polynomials in $\rho_{t}\Opa(\R[x])$ are of the form
\begin{equation}
(x-a)e^{t}+a
\end{equation}
\begin{equation}
\sqrt{d}\left(\frac{x-b/2+\sqrt{d}\tan(\sqrt{d}t)}{\sqrt{d}-(x-b/2)\tan(\sqrt{d}t)}\right)+\frac{b}{2}
\end{equation}
where $d=\frac{4c-b^{2}}{4}$.
\end{theorem}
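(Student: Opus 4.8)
The plan is to transport the classification of irreducibles of $\R[x]$ across the ring isomorphism $\rho_{t}\circ\lambda$. Recall that $\lambda$ (Section~3) is a ring isomorphism $\hurw_{\R}[[x]]\to\Opa(\hurw_{\R}[[x]])$ carrying $\R[x]$ onto $\Opa(\R[x])$, and that $\rho_{t}$ is a ring isomorphism onto its image; hence $\rho_{t}\Opa\colon\R[x]\to\rho_{t}\Opa(\R[x])$ is a ring isomorphism. A ring isomorphism sends units to units and irreducibles to irreducibles, so the irreducible elements of $\rho_{t}\Opa(\R[x])$ are precisely the $\rho_{t}\Opa(p(x))$ with $p(x)$ irreducible in $\R[x]$ (the units $\R\setminus\{0\}$ of $\R[x]$ going to the units $\rho_{t}\Opa(a)=x+at$). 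Since the irreducible elements of $\R[x]$ are classically known to be, up to a unit, the linear polynomials $x-a$ ($a\in\R$) and the monic quadratics with no real root, which I write as $p(x)=(x-\tfrac b2)^{2}+d=x^{2}-bx+c$ with $d:=\tfrac{4c-b^{2}}{4}>0$, it suffices to evaluate $\rho_{t}\Opa$ on these two families.

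For $p(x)=x-a$, I would compute the autonomous polynomials directly from Definition~\ref{def_opa}: with $f=x-a$ we have $\delta f=1$ and $\delta^{k}f=0$ for $k\geq2$, so Lemma~\ref{lemma_delta_opa} gives $A_{1}(x-a)=x-a$ and $A_{n+1}(x-a)=(x-a)\,\delta A_{n}(x-a)=x-a$ for every $n\geq1$; thus $\Opa(x-a)=(x,x-a,x-a,\dots)$ and
\[
\rho_{t}\Opa(x-a)=x+\sum_{n=1}^{\infty}(x-a)\frac{t^{n}}{n!}=x+(x-a)(e^{t}-1)=(x-a)e^{t}+a ,
\]
which is the first listed form.

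For $p(x)=(x-\tfrac b2)^{2}+d$, by definition $\rho_{t}\Opa(p)=\Phi(t,x,p(x))$, and by the theorem of Section~4 relating $\delta_{t}\Phi$, $\delta_{x}\Phi$ and $f(\Phi)$ this flow is the solution of the initial value problem $\delta_{t}\Phi=\Phi^{2}-b\Phi+c=(\Phi-\tfrac b2)^{2}+d$, $\Phi(0,x)=x$. Since the power series solution of such an IVP in $t$ is unique, $\Phi$ agrees with the solution obtained by separation of variables: setting $u=\Phi-\tfrac b2$ gives $u'=u^{2}+d$ with $u(0)=x-\tfrac b2$, and because $d>0$,
\[
\frac{1}{\sqrt d}\arctan\!\frac{u}{\sqrt d}=t+\frac{1}{\sqrt d}\arctan\!\frac{x-b/2}{\sqrt d}.
\]
Solving for $u$ and applying $\tan(\alpha+\beta)=\dfrac{\tan\alpha+\tan\beta}{1-\tan\alpha\tan\beta}$ with $\tan\alpha=\tan(\sqrt d\,t)$ and $\tan\beta=\tfrac{x-b/2}{\sqrt d}$ yields
\[
u=\sqrt d\left(\frac{x-b/2+\sqrt d\,\tan(\sqrt d\,t)}{\sqrt d-(x-b/2)\tan(\sqrt d\,t)}\right),
\]
and adding $\tfrac b2$ gives exactly the second listed form. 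This exhausts the irreducibles, proving the theorem.

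The main obstacle is the quadratic computation, together with two points of care: first, fixing the parametrization of the monic irreducible quadratic as $(x-b/2)^{2}+d$ (equivalently $x^{2}-bx+c$ with $d=(4c-b^{2})/4>0$), so that the completing-the-square shift is $b/2$ and the transformed initial value is $x-b/2$; and second, the justification---via uniqueness of the formal $t$-series solution of $\delta_{t}\Phi=p(\Phi)$, $\Phi(0,x)=x$---that the elementary closed form produced by separation of variables is the \emph{same} element of $\rho_{t}\Opa(\R[x])$ as $x+\sum_{n\geq1}A_{n}(p(x))\tfrac{t^{n}}{n!}$. The linear case and the structural reduction to ``$\rho_{t}\Opa$ of an irreducible of $\R[x]$'' are routine.
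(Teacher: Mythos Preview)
Your proof is correct and follows essentially the same approach as the paper: transport the classical classification of irreducibles in $\R[x]$ across the ring isomorphism $\rho_{t}\Opa$, then identify $\rho_{t}\Opa(x-a)$ and $\rho_{t}\Opa(x^{2}-bx+c)$ by solving the corresponding autonomous ODEs $y'=y-a$ and $y'=y^{2}-by+c$. Your version is simply more explicit---you spell out why the isomorphism preserves irreducibles, compute the linear case directly from the autonomous polynomials, and record the completing-the-square and tangent-addition steps---whereas the paper compresses all of this into two sentences.
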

\begin{proof}
We know that every irreducible polynomial in $\R[x]$ is of the form $x-a$ and $x^{2}-bx+c$. Then every polynomial in $\R[x]$ contains these factors. By solving the equations $y^{\prime}=y-a$ and $y^{\prime}=y^{2}-by+c$ we obtain the functions above.
\end{proof}

\begin{example}
Solve the equation $y^{\prime}=1-y+y^{2}-y^{3}$. Directly by the Lemma
\ref{lemma_potencia} la solución es
\begin{eqnarray*}
\Phi(t,x,f(x))&=&(x+t)\boxplus xe^{-t}\boxplus\left(\frac{x}{1-xt}\right)\boxplus\left(\frac{x}{\sqrt{1+2x^{2}t}}\right)
\end{eqnarray*}
By factoring $f(x)$ we obtain $f(x)=1-x+x^{2}-x^{3}=(1-x)(x^{2}+1)$. Then
\begin{eqnarray*}
\Phi(t,x,f(x))&=&\rho_{t}\Opa(1-x)\odot\rho_{t}\Opa(x^{2}+1)\\
&=&\rho_{t}\Opa(-(x-1))\odot\rho_{t}\Opa(x^{2}+1)\\
&=&[(x-1)e^{-t}-1]\odot\left(\frac{x+\tan(t)}{1-x\tan(t)}\right)
\end{eqnarray*}
would be the factored solution. A solution with the variable separation method leads to a function
that cannot be written explicitly. Then solve on the ring $\rho_{t}\Opa(\hurw_{\R}[[x]])$ is 
the best way to go.
\end{example}

As something very easy to prove we have the following
\begin{theorem}
The irreducible polynomials in $\rho_{t}\Opa(\C[x])$ are of the form
\begin{equation}
\Phi(t,x,x-a)=(x-a)e^{t}+a
\end{equation}
\end{theorem}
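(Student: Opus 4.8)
The plan is to reduce the statement to the Fundamental Theorem of Algebra together with the fact, established earlier, that $\rho_{t}\Opa$ is a ring isomorphism. First I would recall that over $\C$ a polynomial is irreducible in $\C[x]$ precisely when it has degree one; since a nonzero scalar is a unit of $\C[x]$, the monic linear polynomials $x-a$, $a\in\C$, form a complete set of representatives of the irreducibles of $\C[x]$ up to associates. This is the only place where the special nature of $\C$ (rather than $\R$) enters: $\C[x]$ has no irreducible quadratics, so the present theorem is exactly the degenerate case of the preceding one in which no genuine quadratic factor ever survives.

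Next I would invoke that $\lambda$, hence $\rho_{t}\circ\lambda$, is an isomorphism of rings, so its restriction is a ring isomorphism from $\C[x]$ onto $\rho_{t}\Opa(\C[x])$, carrying $\sum_{k}a_{k}x^{k}$ to the flow–polynomial $\bbox_{k}[\rho_{t}\Opa(a_{k}x)]_{\odot}^{k}$. A ring isomorphism sends units to units and irreducible elements to irreducible elements in both directions; therefore the irreducible polynomials of $\rho_{t}\Opa(\C[x])$ are exactly the images of the irreducibles of $\C[x]$, i.e. the elements $\rho_{t}\Opa(x-a)$ with $a\in\C$. The images of the other degree-one polynomials $c(x-a)$ with $c\ne0$ differ from these by the unit $\rho_{t}\Opa(c)=x+ct$ (whose $\odot$–inverse is $\rho_{t}\Opa(c^{-1})=x+c^{-1}t$), so they are associates and yield nothing new.

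It then remains to compute $\rho_{t}\Opa(x-a)$ explicitly. By Corollary \ref{coro_delta_aut}, $A_{n}(x-a)=\overbrace{(x-a)\delta(\cdots(x-a)\delta(x-a))}^{n}=x-a$ for every $n\geq1$, because $\delta(x-a)=1$; summing the exponential series gives
\begin{equation*}
\rho_{t}\Opa(x-a)=x+\sum_{n=1}^{\infty}(x-a)\frac{t^{n}}{n!}=x+(x-a)(e^{t}-1)=(x-a)e^{t}+a,
\end{equation*}
which is $\Phi(t,x,x-a)$, the flow of $y'=y-a$ with $y(0)=x$, as claimed.

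The only point requiring genuine care—the ``main obstacle''—is the bookkeeping that the word ``polynomial'' and the word ``irreducible'' on the flow side really do correspond, under $\rho_{t}\Opa$, to their counterparts in $\C[x]$: one must note that the flow–polynomials of the earlier definition are precisely the image of $\C[x]$, and that the notion of irreducibility intended in the statement is the ordinary ring-theoretic one (non-unit, with no factorization into two non-units under $\odot$). Once this identification is spelled out, the isomorphism argument applies verbatim and the theorem follows immediately.
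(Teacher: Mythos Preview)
Your proof is correct and follows essentially the same route as the paper: the paper treats this as the obvious specialization of the preceding $\R$-theorem, relying on the fact that over $\C$ the only irreducibles are the linear polynomials $x-a$ and then identifying their image under $\rho_{t}\Opa$ with the flow of $y'=y-a$. The only cosmetic difference is that the paper obtains $(x-a)e^{t}+a$ by solving the ODE, whereas you compute it directly from the series via $A_{n}(x-a)=x-a$; your explicit invocation of the ring isomorphism to justify that irreducibility is preserved is a welcome clarification of what the paper leaves implicit.
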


Other important results that can be deduced from the above Lemma are as follows
\begin{theorem}
Let $\F$ be a field. Then the ideals in $\rho_{t}\Opa(\hurw_{\F}[[x]])$ are of the form
\begin{equation}
\left\langle\rho_{t}(\Opa(x))\right\rangle=xe^{t}\odot\rho_{t}\Opa(\hurw_{\F}[[x]])
\end{equation}
and
\begin{equation}
\left\langle\rho_{t}(\Opa(x))_{\odot}^{m}\right\rangle=\frac{x}{\sqrt[m-1]{1-(m-1)x^{m-1}t}}\odot\rho_{t}\Opa(\hurw_{\F}[[x]])
\end{equation}
for $m\geq2$, where $\left\langle xe^{t}\right\rangle$ is a maximal ideal.
\end{theorem}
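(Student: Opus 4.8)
The plan is to push the ideal classification of $\Opa(\hurw_{\F}[[x]])$ through the isomorphism $\rho_{t}$ and then make the generators explicit by means of Lemma~\ref{lemma_potencia}. First I would recall that $\rho_{t}$ is a ring isomorphism of $(\Opa(\hurw_{\F}[[x]]),\boxplus,\odot)$ onto $(\rho_{t}\Opa(\hurw_{\F}[[x]]),\boxplus,\odot)$, so it carries the lattice of ideals bijectively and inclusion-preservingly, sending a principal ideal $\left\langle\Opa(g)\right\rangle$ to $\left\langle\rho_{t}\Opa(g)\right\rangle$ and maximal ideals to maximal ideals. Combining this with Theorem~\ref{theo_ideal_princi}, which asserts that for a field $\F$ the ideals of $\Opa(\hurw_{\F}[[x]])$ are exactly $\left\langle\Opa(x^{m})\right\rangle$ for $m\geq 1$ with $\left\langle\Opa(x)\right\rangle$ the unique maximal one, I get that the ideals of $\rho_{t}\Opa(\hurw_{\F}[[x]])$ are precisely $\left\langle\rho_{t}\Opa(x^{m})\right\rangle$, $m\geq 1$, and that $\left\langle\rho_{t}\Opa(x)\right\rangle$ is maximal.

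It then remains only to identify each generator $\rho_{t}\Opa(x^{m})$ in closed form. By Lemma~\ref{lemma_ope_prod} we have $\Opa(x^{m})=\Opa(x)_{\odot}^{m}$, and hence, applying $\rho_{t}$ and the definition of $\odot$ on $\rho_{t}\Opa(\hurw_{\F}[[x]])$, $\rho_{t}\Opa(x^{m})=[\rho_{t}\Opa(x)]_{\odot}^{m}$ in the sense of (\ref{eqn_pot_rho}). Now I would invoke Lemma~\ref{lemma_potencia} with $a=1$: this yields $\rho_{t}\Opa(x)=xe^{t}$ when $m=1$ and $\rho_{t}\Opa(x^{m})=x/\sqrt[m-1]{1-(m-1)x^{m-1}t}$ when $m\geq 2$. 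Substituting these expressions into the ideals $\left\langle\rho_{t}\Opa(x^{m})\right\rangle$ produces exactly the two displayed forms, and the maximality of $\left\langle xe^{t}\right\rangle$ is inherited from that of $\left\langle\Opa(x)\right\rangle$ through the isomorphism $\rho_{t}$.

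I do not anticipate a genuine obstacle here: the statement is essentially a transport of Theorem~\ref{theo_ideal_princi} across a ring isomorphism, followed by the explicit flow evaluations of Lemma~\ref{lemma_potencia}. The one point that deserves an explicit line is the identity $\rho_{t}\Opa(x^{m})=[\rho_{t}\Opa(x)]_{\odot}^{m}$, which must be justified from the multiplicativity of $\Opa$ (Lemma~\ref{lemma_ope_prod}) together with the way $\odot$ was transported to $\rho_{t}\Opa(\hurw_{\F}[[x]])$; once that is in hand the remainder is a substitution. If one wishes to be fully thorough, one should also note that the list $\{\left\langle\Opa(x^{m})\right\rangle : m\geq 1\}$ (together with the zero ideal and the whole ring) is the complete ideal lattice of $\Opa(\hurw_{\F}[[x]])$, which is precisely the content of Theorem~\ref{theo_ideal_princi}, so nothing is missed after applying $\rho_{t}$.
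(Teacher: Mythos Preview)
Your proposal is correct and follows essentially the same approach as the paper: transport the ideal classification of Theorem~\ref{theo_ideal_princi} through the ring isomorphism $\rho_{t}$, and then read off the explicit generators via Lemma~\ref{lemma_potencia}. The paper's own proof is a single line invoking that $\left\langle\Opa(x)_{\odot}^{m}\right\rangle$ are the ideals of $\Opa(\hurw_{\F}[[x]])$; your version simply spells out the intermediate steps (the isomorphism carrying principal and maximal ideals, the identity $\rho_{t}\Opa(x^{m})=[\rho_{t}\Opa(x)]_{\odot}^{m}$, and the $a=1$ specialization of Lemma~\ref{lemma_potencia}) more explicitly.
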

\begin{proof}
It follows by taking into account that $\left\langle\Opa(x)_{\odot}^{m}\right\rangle$ are ideals in $\Opa(\hurw_{\F}[[x]])$.
\end{proof}

\begin{theorem}
Let $\F$ be a field. In the field $\rho_{t}\Opa[\hurw_{\F}((x))]$ we have the following identities
\begin{eqnarray}
(x+at)&\odot&\left(x+\frac{t}{a}\right)=x+t,\ \ \ a\neq0\\
xe^{t}&\odot&\sqrt{2t+x^{2}}=x+t\\
\frac{x}{\sqrt[m-1]{1-(m-1)x^{m-1}t}}&\odot&\sqrt[m+1]{(m+1)t+x^{m+1}}=x+t
\end{eqnarray}
for $m\geq2$.
\end{theorem}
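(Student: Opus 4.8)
The plan is to exploit that $\rho_{t}\Opa$ is a ring isomorphism which carries the ordinary product $\cdot$ of $\hurw_{\F}[[x]]$ (and, after the canonical extension to fraction fields, of $\hurw_{\F}((x))$) onto the product $\odot$, and which sends the constant function $1$ to $x+t$. Consequently, to verify any of the three claimed identities $P\odot Q=x+t$ it suffices to recognize $P=\rho_{t}\Opa(f)$ and $Q=\rho_{t}\Opa(f^{-1})$ for a single unit $f$ of $\hurw_{\F}((x))$; then $P\odot Q=\rho_{t}\Opa(ff^{-1})=\rho_{t}\Opa(1)=x+t$. So the whole proof reduces to identifying, for each displayed closed form, the function whose flow it is.

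For the first identity I would note that the constant $a\in\F\subset\hurw_{\F}[[x]]$ has $A_{1}(a)=a$ and $A_{n}(a)=a\,\delta(A_{n-1}(a))=0$ for $n\ge 2$ by Lemma \ref{lemma_delta_opa}, whence $\rho_{t}\Opa(a)=x+at$; since $\F$ is a field and $a\neq 0$, likewise $\rho_{t}\Opa(a^{-1})=x+t/a$. Because $a\cdot a^{-1}=1$ in $\F$, the reduction above yields $(x+at)\odot(x+t/a)=\rho_{t}\Opa(1)=x+t$.

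The second and third identities are the cases $m=1$ and $m\ge 2$ of the single assertion $\rho_{t}\Opa(x^{m})\odot\rho_{t}\Opa(x^{-m})=x+t$. For the first factor I would use that $\rho_{t}\Opa(x)=xe^{t}$ together with multiplicativity, so $\rho_{t}\Opa(x^{m})=[\rho_{t}\Opa(x)]_{\odot}^{m}=[xe^{t}]_{\odot}^{m}$, which by Lemma \ref{lemma_potencia} equals $xe^{t}$ when $m=1$ and $x/\sqrt[m-1]{1-(m-1)x^{m-1}t}$ when $m\ge 2$. For the second factor I would check directly that $\Psi_{m}:=\sqrt[m+1]{(m+1)t+x^{m+1}}$, read as a power series in $t$ over $\hurw_{\F}((x))$ with constant term $x$, satisfies $\delta_{t}\Psi_{m}=\Psi_{m}^{-m}$ and $\Psi_{m}|_{t=0}=x$; by uniqueness of the formal solution of the initial value problem $u'=u^{-m}$, $u(0)=x$ — equivalently by the correspondence $\Phi(t,x,g)=\rho_{t}\Opa(g)$ — this forces $\Psi_{m}=\rho_{t}\Opa(x^{-m})$. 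Since $x^{m}\cdot x^{-m}=1$, the reduction gives $\rho_{t}\Opa(x^{m})\odot\Psi_{m}=\rho_{t}\Opa(1)=x+t$.

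The one genuinely delicate point, and the step I expect to cost the most care, is licensing the negative powers $x^{-m}$: these do not lie in $\hurw_{\F}[[x]]$ but in its field of fractions $\hurw_{\F}((x))$, so I must invoke that the isomorphism $\rho_{t}\Opa$ and the identities $\rho_{t}\Opa(fg)=\rho_{t}\Opa(f)\odot\rho_{t}\Opa(g)$, $\rho_{t}\Opa(1)=x+t$ extend from the domain $\hurw_{\F}[[x]]$ to $\hurw_{\F}((x))$, i.e. that everything takes place inside the field $\rho_{t}\Opa[\hurw_{\F}((x))]$ of Theorem \ref{theo_campo_flujo}; this is just the canonical extension of an isomorphism of integral domains to their fields of fractions. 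Once this is in place, the remaining work — confirming the two differentiation identities and the initial conditions — is routine, and the three identities collapse to $f\cdot f^{-1}=1$ in $\hurw_{\F}((x))$.
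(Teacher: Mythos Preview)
Your proposal is correct and follows essentially the same route as the paper: both arguments reduce each identity to $f\cdot f^{-1}=1$ in the underlying Laurent series ring, identifying the displayed closed forms as the flows of $y'=a$, $y'=a^{-1}$, $y'=y^{m}$, $y'=y^{-m}$ respectively. You are simply more explicit than the paper---invoking Lemma~\ref{lemma_delta_opa} for the constants, Lemma~\ref{lemma_potencia} for the positive powers, checking the ODE for $\Psi_{m}$ directly, and flagging the passage from $\hurw_{\F}[[x]]$ to $\hurw_{\F}((x))$---whereas the paper's proof is a two-line sketch.
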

\begin{proof}
We have to keep in mind that the flows $\{rho_{t}\Opa(f)$ and $\rho_{t}\Opa(g)$ are inverses in 
the ring $\rho_{t}\Opa[\hurw_{\F}((x))]$ only if $fg=1$. Then the identities above arise from 
solving the differential equations $y^{\prime}=a$, $y^{\prime}=a^{-1}$, $y^{\prime}=y^{m}$ and
$y^{\prime}=y^{-m}$ for $m\geq1$, respectively.
\end{proof}

We end this section with the following result that relates the operations $\circ$, $\boxplus$ and 
$\odot$
\begin{theorem}
In the ring $\rho_{t}\Opa(\hurw_{R}[[x]])$ we have the following identities relating the operations $\circ$, $\boxplus$ and $\odot$.
\begin{enumerate}
\item $\Phi_{t}(x,f+g)\circ\Phi_{s}(x,f+g)=\Phi_{t+s}(x,f)\boxplus\Phi_{t+s}(x,g)$.
\item $\Phi_{t}(x,fg)\circ\Phi_{s}(x,fg)=\Phi_{t+s}(x,f)\odot\Phi_{t+s}(x,g)$
\end{enumerate} 
\end{theorem}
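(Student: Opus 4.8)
The plan is to obtain both identities as immediate consequences of two facts already proved: the semigroup (flow) law for $\Phi$, and the behaviour of $\rho_t\Opa$ under sums and products of vector fields. Recall that the theorem asserting that $\Phi(t,x,f(x))$ is a flow gives, for every $h\in\hurw_{R}[[x]]$, the composition law
\[
\Phi_{t}(x,h)\circ\Phi_{s}(x,h)=\Phi(t,\Phi(s,x,h),h)=\Phi_{t+s}(x,h),
\]
where $\circ$ denotes substitution of $\Phi_{s}(x,h)$ into the phase-space slot of $\Phi_{t}(\,\cdot\,,h)$. Hence the left-hand sides of (1) and (2) collapse to $\Phi_{t+s}(x,f+g)$ and $\Phi_{t+s}(x,fg)$ respectively, and it only remains to expand these two flows.

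For (1) I would use the additive decomposition of flows. Since $\Opa(f+g)=\Opa(f)\boxplus\Opa(g)$ by Lemma~\ref{lemma_ope_+}, and since the sum $\boxplus$ on $\rho_{\tau}\Opa(\hurw_{R}[[x]])$ is by definition $\rho_{\tau}[\Opa(f)\boxplus\Opa(g)]=\rho_{\tau}\Opa(f)\boxplus\rho_{\tau}\Opa(g)$, we get for every time $\tau$
\[
\Phi(\tau,x,f+g)=\rho_{\tau}\Opa(f+g)=\rho_{\tau}\Opa(f)\boxplus\rho_{\tau}\Opa(g)=\Phi(\tau,x,f)\boxplus\Phi(\tau,x,g).
\]
Taking $\tau=t+s$ gives $\Phi_{t+s}(x,f+g)=\Phi_{t+s}(x,f)\boxplus\Phi_{t+s}(x,g)$, which together with the first paragraph is exactly (1). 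For (2) I would argue identically with the multiplicative decomposition: $\Opa(fg)=\Opa(f)\odot\Opa(g)$ by Lemma~\ref{lemma_ope_prod}, so $\Phi(\tau,x,fg)=\rho_{\tau}\Opa(fg)=\rho_{\tau}\Opa(f)\odot\rho_{\tau}\Opa(g)=\Phi(\tau,x,f)\odot\Phi(\tau,x,g)$, and setting $\tau=t+s$ yields (2).

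The computation is routine once the bookkeeping is set up; the only point requiring care is conceptual rather than technical, namely keeping the two compositions apart. On the left one composes flows honestly in the phase variable ($\circ$), while on the right $\boxplus$ and $\odot$ are the formal operations of the one-dimensional flows ring, which act on the vector field and leave the time variable untouched. It is precisely this separation that lets one first invoke the flow property in the $x$-variable and then the decomposition in the $f$-variable without interference, and that makes evaluating the decomposition formulas at the time $t+s$ legitimate, since $\rho_{\tau}$ is defined for every $\tau$ and the decomposition holds identically in $\tau$.
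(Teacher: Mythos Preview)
Your argument is correct and is precisely the ``straightforward'' proof the paper has in mind: apply the flow law $\Phi_{t}\circ\Phi_{s}=\Phi_{t+s}$ with $h=f+g$ (resp.\ $h=fg$), then use Lemma~\ref{lemma_ope_+} (resp.\ Lemma~\ref{lemma_ope_prod}) together with the definition of $\boxplus$ and $\odot$ on $\rho_{t}\Opa(\hurw_{R}[[x]])$ to split the resulting flow at time $t+s$. The paper gives no further details, so your write-up simply spells out what it leaves implicit.
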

\begin{proof}
The proof is straightforward.
\end{proof}

\Addresses
\end{document}